\documentclass{EMClassColection} 


\usepackage{todonotes}
\usepackage[english]{babel} 
\usepackage[utf8]{inputenc}
\usepackage[T1]{fontenc}
\usepackage{verbatim} 
\usepackage{float}

\usepackage{amsmath}
\usepackage{amsfonts}
\usepackage{amssymb}
\usepackage{amsthm}

\usepackage{mathrsfs}
\usepackage{mathtools}
\usepackage{euscript}   
\usepackage[scr=boondoxo,scrscaled=1.05]{mathalfa} 

\usepackage{pifont} 
\usepackage{tikz-cd} 

\usepackage{listings}

\usepackage[colorlinks=true, pagebackref=true]{hyperref}
\hypersetup{linkcolor=black, citecolor=black, urlcolor=black}
\usepackage{comment} 


\newtheorem{theorem}{Theorem}[section]
\newtheorem{proposition}[theorem]{Proposition}
\newtheorem{lemma}[theorem]{Lemma}
\newtheorem{corollary}[theorem]{Corollary}

\newtheorem{obs}[theorem]{Remark}

\usepackage[all]{xy}

\usepackage{bbm}  


\newcommand\EE{\mathbb{E}}

\newcommand\PP{\mathbb{P}}

\newcommand\RR{\mathbb{R}}

\newcommand\ZZ{\mathbb{Z}}
\newcommand\one{\mathbbm{1}} 

\newcommand{\euB}{\EuScript{B}}
\newcommand{\euF}{\EuScript{F}}
\newcommand{\euG}{\EuScript{G}}
\newcommand{\euL}{\EuScript{L}}
\newcommand{\euM}{\EuScript{M}}

\newcommand{\C}{\mathcal{C}}
\newcommand{\D}{\mathcal{D}}

\newcommand{\T}{\mathcal{T}}
\newcommand{\U}{\mathcal{U}}

\newcommand{\cS}{\mathcal{S}}

\newcommand{\tg}{{g}}

\newcommand{\tr}{{\tilde r}}
\newcommand{\terr}{{\tilde r}}
\newcommand{\tte}{{\tilde t}}
\newcommand{\tv}{{\tilde v}}
\newcommand{\tw}{{\tilde w}}
\newcommand{\tx}{{\tilde x}}
\newcommand{\ty}{{\tilde y}}
\newcommand{\tz}{{\tilde z}}

\newcommand{\tvarphi}{{\tilde \varphi}}

\newcommand{\qo}{q}
\newcommand{\po}{p}
\newcommand{\vo}{w}

\newcommand{\sfH}{\mathsf{H}}

\newcommand{\sfX}{\mathsf{X}}
\newcommand{\sfY}{\mathsf{Y}}



\newcommand{\bsfX}{\textsf{\textbf{X}}}

\newcommand{\ttA}{\mathsf{A}}

\newcommand{\ttK}{\mathsf{K}}

\newcommand{\ttu}{\mathsf{u}}
\newcommand{\ttj}{\mathsf{j}}
\newcommand{\tto}{\mathsf{o}}
\newcommand{\tty}{\mathsf{y}}
\newcommand{\ttx}{\mathsf{x}}

\newcommand{\dd}{\mathrm d}

\newcommand{\ba}{\mathsf a}
\newcommand{\bb}{\mathsf b}

\newcommand{\bo}{\mathsf o}
 \newcommand{\boa}{\overline{\mathsf o\mathsf a}}
 \newcommand{\bob}{\overline{\mathsf o\mathsf b}}
  \newcommand{\bab}{\overline{\mathsf a\mathsf b}}


\newcommand{\kX}{\mathfrak{X}}
\newcommand{\kXa}{\mathfrak{X}_{\ge0}}
\newcommand{\kY}{\mathfrak{Y}}
\newcommand{\kYa}{\mathfrak{Y}_{\ge0}}


\newcommand\eps{\varepsilon}
\newcommand{\err}{r}
 \newcommand{\hy}{\hat y}

\newcommand\norm[1]{||{#1}|| }
\newcommand\normi[1]{||{#1}||_\infty}

\newcommand{\eqlaw}{\stackrel{\law}{=}}


\DeclareMathOperator{\eff}{eff}
\DeclareMathOperator{\law}{law}
\DeclareMathOperator\Var{\mathbb{V}}
\DeclareMathOperator\Cov{\text{Cov}}
\DeclareMathOperator{\sign}{sign}

\allowdisplaybreaks

\title{Hard Rod Hydrodynamics \\and the Levy Chentsov Field}
\author{Pablo A. Ferrari, Chiara Franceschini, \\[1mm]Dante G. E. Grevino, Herbert Spohn}
\abstract{We study the hydrodynamics of the hard rod model proposed by Boldrighini, Dobrushin and Soukhov by describing the displacement of each quasiparticle with respect to the corresponding ideal gas particle as a height difference in a related field. Starting with a family of nonhomogeneous Poisson processes contained in the position-velocity-length space $\RR^3$, we show laws of large numbers for the quasiparticle positions and the length fields, and the joint convergence of the quasiparticle fluctuations to a Levy Chentsov field. We allow variable rod lengths, including negative lengths.}
\Keywords{hard rods, hydrodynamics, Levy Chentsov field}
\MSC{2020}{82C21, 
82D15, 
37B15, 
70F45, 
76A05
}
\shortauthor{PA Ferrari, C Franceschini, DGE Grevino, H Spohn} 

\Year{20XX} 
\volume{XX} 
\firstpage{55} 
\doi{XXX} 

\begin{document}

\maketitle

\begin{flushright}
  {\em Dedicated to Errico Presutti\\ at the occasion of his 80th birthday\\ with deep gratitude for his continuing\\
guidance and lasting insights}
\end{flushright}

\paragraph{Abstract} We study the hydrodynamics of the hard rod model proposed by Boldrighini, Dobrushin and Soukhov by describing the displacement of each quasiparticle with respect to the corresponding ideal gas particle as a height difference in a related field. Starting with a family of nonhomogeneous Poisson processes contained in the position-velocity-length space $\RR^3$, we show laws of large numbers for the quasiparticle positions and the length fields, and the joint convergence of the quasiparticle fluctuations to a Levy Chentsov field. We allow variable rod lengths, including negative lengths.\\

\noindent{\sl Keywords:} hard rods, hydrodynamics, Levy Chentsov field\\

\noindent{\sl MSC 2020:}\ 
82C21, 
82D15, 
37B15, 
70F45, 
76A05
\section{Introduction}
 Hard rods is a model of classical particles  moving on the real line and interacting through a hard core potential of diameter  $\mathsf{r}$.
 One is interested in the dynamics of the counting function $f(x,t;v)$, where $f(x,t;v)\mathrm{d}x \mathrm{d}v$ is the number of hard rods in the volume element $\mathrm{d}x \mathrm{d}v$
 at time $t$. As conjectured by Percus \cite{P69} and later proved by Boldrighini, Dobrushin, and Soukhov \cite{bds}, under ballistic scaling the counting function becomes deterministic and is governed by a set of coupled nonlinear hyperbolic conservation laws of the form
 \begin{align}\label{2}
\partial_t f(x,t;v) +\partial_x \big((1 - \mathsf{r} \rho(x,t))^{-1} ( v -  \mathsf{r} \rho(x,t) u(x,t))f(x,t;v)\big) = 0.
\end{align}
Here   
\begin{align}\label{3}
 \rho(x,t) = \int_\mathbb{R}\mathrm{d}v f(x,t;v), \qquad u(x,t) =  \rho(x,t)^{-1} \int_\mathbb{R}\mathrm{d}v f(x,t;v) v
\end{align}
denote density and mean velocity at $(x,t)$. Further extensions and discussions can be found in \cite{BBDV22,D89}. Exact equilibrium spacetime  correlations were computed by Jespen \cite{J65}, Lebowitz, Percus 
 \cite{LP67}  and Lebowitz, Percus, Sykes \cite{LPS68}. Considerably later, fluctuation behavior was reanalysed  by Spohn{ \cite{spohn1982hydrodynamical,s}, Boldrighini, Wick \cite{boldrighini-wick}, and Presutti, Wick \cite {MR971036,MR968599}.
Note that, in the limit  $\mathsf{r} \to0$, Eq.\,\eqref{2} is linear and one recovers the ideal gas dynamics.

Hard rods have become a paradigmatic example for generalized hydrodynamics (GHD) and  serves as a schematic illustration of more intricate models as the Toda lattice \cite{D19,S19} and box-ball system \cite{F18,CS20}, see \cite{D19a,S21, BBDV22,ADDKS22}
 for lecture notes, special volumes and reviews.  In GHD one considers integrable many-body systems in one spatial dimension. 
Such systems have an infinite number of local conservation laws, which leads to the construction of generalized Gibbs ensembles (GGE).  Hydrodynamics then assumes that locally the system is in a GGE and, on the ballistic scale, its parameters evolve according to an equation of a similar structure as  \eqref{2}. The central microscopic input to hydrodynamic equations is the two-particle scattering shift,
which in general depends on the two incoming velocities.  Hard rods have the crucial simplification that the scattering shift is  $- \mathsf{r}$ independent of the incoming velocities.
Still, structural aspects of GHD are well illustrated by hard rods.

Rather than following the trajectories of mechanical particles, for hard rods the more efficient way is to turn to the motion of quasiparticles.  A quasiparticle maintains its own velocity. When hit a slower quasiparticle with length $\mathsf{r}$ the quasiparticle jumps by $\mathsf{r}$ and when it collides with a faster one, it will jump by $-\mathsf{r}$. Since the initial data are random, the jumps occur at random times. Note that in the limit $\mathsf{r} \to 0$ one recovers the straight line motion of an ideal gas. Quasiparticles also suggest a technique for proof: The initial configuration is compressed so 
to remove the hard core, still keeping the velocities. Then all particles evolve according to the free dynamics. The true hard rod configuration at time $t$ is obtained by the correspondingly reversed
expansion.  

In our article we report on a novel proof of \eqref{2} employing Chentsov Lantu\'{e}joul fields. We use the occasion to extend previous results in two directions.  Firstly we allow for a variable rod length, $\mathsf{r}_j$, attached to the quasiparticle with label $j$, implying that quasiparticles crossing it jump by  $\pm\mathsf{r}_j$. If the rod length would be assigned to particles, the model becomes intractable. 
The second extension is to allow for a negative rod length, $\mathsf{r}_j \in \mathbb{R}$. This appears unphysical at first look. However, as an example, the scattering shift of the Toda lattice is given by $2\log|v_1-v_2|$, $v_1,v_2$ the two incoming velocities. For small $|v_1-v_2|$ the shift is negative, Toda particles repel, and conventional hard rods are a good model. On the other hand for  $|v_1-v_2| > 1$ the shift turns positive which means that the two Toda particles cross each other before separating. 

The variable model with a finite number of positive rod sizes was first introduced by Aizenman, Lebowitz, and Marro \cite{ALM78},  thereby extending 
 the results for equilibrium spacetime correlations obtained in \cite{LPS68}. Very unexpectedly, hard rods with variable rod length appears as a phenomenological model  in an article by Cardy and Doyon \cite{cardy-doyon} in their study of $T\bar{T}$ deformations in relativistic and nonrelativistic integrable field theories.
 
 Compared to the seminal work \cite{bds}, we choose simpler initial conditions, namely a non-homogeneous Poisson process in $\mathbb{R}^3$, where each point 
 represents position, velocity, and rod length. Since reasonable initial measures converge in arbitrarily small macroscopic times to a Poisson 
 process \cite{s}, this is not such a restrictive assumption. As crucial advantage, the proof of the law of large numbers is transparent and almost immediate. In addition, hard rod joint positional fluctuations converge to a non-homogeneous Levy-Chentsov field. 

In Section~\ref{S2} we describe the model and state the results. In Section~\ref{S3} we introduce the ideal gas dynamics and its relation with the hard rod model. In Section~\ref{S4} we study the fields induced by line processes and their relation with hard rods, and prove the law of large numbers and the convergence of the positional fluctuations to the Levy Chentsov field. Section~\ref{S5} deals with the macroscopic setup and the proof of the macroscopic evolution theorem.\\\\
\textbf{Acknowledgments}. We thank Stefano Olla for motivating discussions. This article was partially written at the Mathematical Sciences Research Institute, during the semester Universality and Integrability in Random Matrix Theory and Interacting Particle Systems in 2021, with support of MSRI, Simon Foundation. PAF Acknowledges support from Conicet and Agencia of the Argentinian Science Ministry.

 \section{Summary of main results}
 \label{S2}

The segment $(y,y+\err)$ is called \emph{rod} of position $y$ and length $\err$, for the moment $r\ge 0$.  The 3-dimensional point $(y,v,\err)$ in $\RR^3$ represents a quasiparticle at position $y$ with an associated rod of length $\err$ traveling at speed~$v$. A \emph{hard rod} configuration $\sfY\subset \RR^3$ is a locally finite set of non-intersecting rods, that is, $\bigl(y,y+\err\bigr)\cap \bigl(\ty,\ty+\tr \bigr)=\emptyset$ for all $(y,v,\err),(\ty,\tv ,\tr )\in\sfY$. The evolution of a hard rod configuration is deterministic: each quasiparticle travels ballistically at its speed until a collision: if at time $t-$ the positions of quasiparticles $(y,v,\err)$ and $(\ty,\tv ,\tr )$ with $v>\tv $ satisfy $\ty=y+\err$, then, at time $t$ each quasiparticle is shifted in the direction of the other quasiparticle, interchanging order:
\begin{align}
  \begin{array}{ccc}
  \text{Before collision, at time $t-$} &&  \text{After collision, at time $t$}\\
  (y,v,\err), (\ty,\tv ,\tr ) && (y+\tr ,v,\err), (\ty-\err,\tv ,\tr )
  \end{array}\label{collision}  
\end{align}
After collision each quasiparticle continues at its speed until the next collision occurs.

The above description defines the hard rod exclusion interaction and dynamics when there is a finite number of initial nonnegative rods. A system including negative rods is defined in function of the ideal gas and a related field. Let $\sfX\subset\RR^3$ be a locally finite configuration belonging to a set $\kX$ defined later in \eqref{kX}. A point $(x,v,\err)\in\sfX$ represents an ideal gas particle at position $x$ traveling at speed $v$ of ``length'' $\err$, a real number; for the moment the length is just a mark attached to the particle. The \emph{ideal gas} configuration at time $t\in\RR$ is defined by
\begin{align}
  \label{tt1}
  T_t\sfX := \bigl\{(x+vt,v,\err): (x,v,\err)\in\sfX\bigr\}.
\end{align}
Let the field $\sfH[\sfX]$, be the function $\sfH:\RR^ 2\to\RR$ defined by
\begin{align}
  \label{H8}
  \sfH(t,x) := \sum_{(z,w,\err)\in\sfX} \err\,\bigl(\one\{z\ge0,\,z+w t<x\}- \one\{z<0,\,z+w t\ge x\}\bigr).
\end{align}
The notation $[\sfX]$ indicates dependency on $\sfX$, when not clear from the context. For each given $t$, the path $\bigl( \sfH(t,x) \bigr)_{x\in\RR}$ is a jump process with nonhomogeneous jumps, see Fig.\ref{f1}. Moving $t$, the increments of the path travel ballistically and interchange at collisions, see Fig.\ref{f2}. 
Define also the \emph{mass flow} $ \ttj[\sfX]$ as the function $\ttj:\RR^3\to\RR$ given by
\begin{align}
  \label{jH8}
  \ttj(x,v,t) = \sfH(t,x+vt) - \sfH(0,x).
\end{align}
this is the sum of the lengths crossing the segment $(x+vs)_{0\le s\le t}$ with speed smaller than $v$ minus the lengths crossing it with speed bigger than $v$. We are assuming that mass is the length of the particle.
Denote $D_a[\sfX]$ the \emph{dilation} with respect to the point $a$, defined by
\begin{align}
  \label{D0X}
  D_a(x)&:= x+\sfH(0,x)- \sfH(0,a),\\
  D_a\sfX&:= \bigl\{(D_a(x),v,\err):(x,v,\err)\in\sfX\bigr\},
\end{align}
see Fig.{}\ref{f1}. We observe that if all rod lengths are nonnegative then $D_a$ is invertible, otherwise $D_a$ may not be one-to-one. 
\begin{figure}[h!]
  \centering
  \includegraphics[width=.55\textwidth]{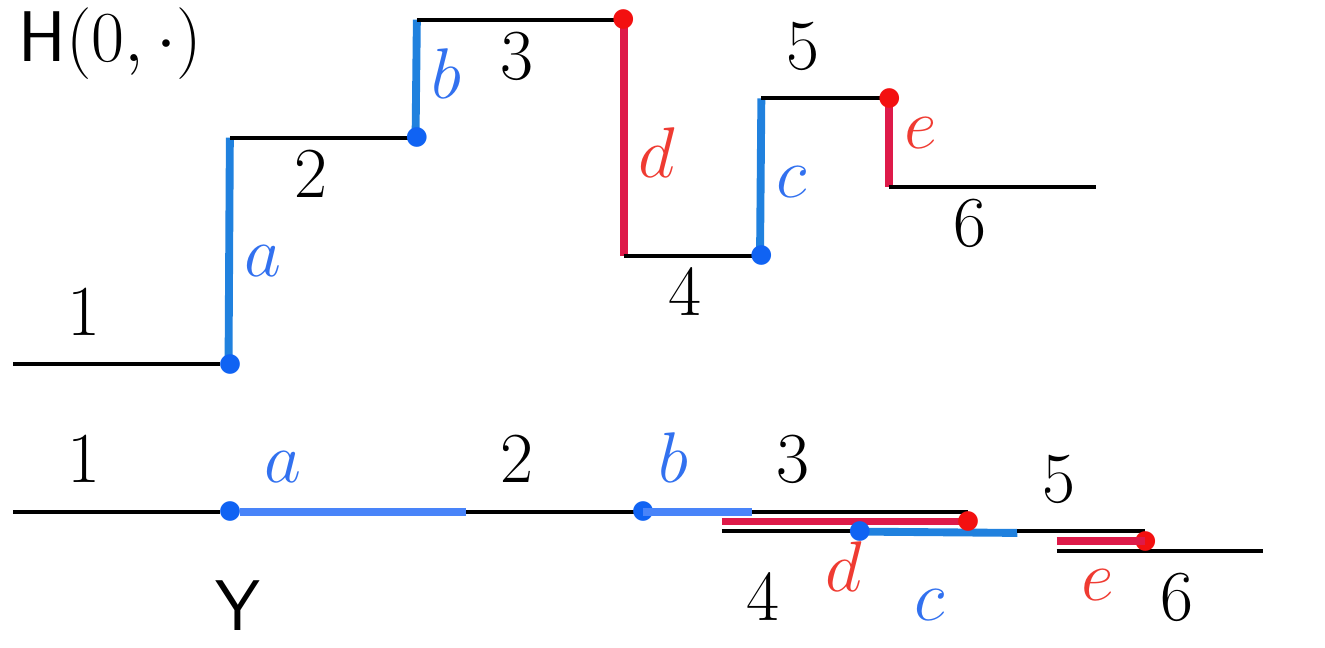}
  \caption{The upper path is a cut at time $0$ of the field $\sfH$ associated to the ideal gas configuration $\sfX$. The horizontal positions of dots are the position $x$ of particles $(x,v,\err)\in\sfX$. Blue and red vertical lines correspond to positive and negative $r$'s, respectively. Black horizontal segments correspond to interparticle segments in $\sfX$. The dots in the lower picture correspond to the position $y$ of quasiparticles $(y,v,\err)$ in $\sfY = D_0\sfX$. Red and blue segments in the graph of $\sfH$ are folded backwards and unfolded forward, respectively to obtain $\sfY$. All segments are contained in the same line, some pieces have been shifted down to distinguish overlapping rods. Numbers, letters and colors indicate the correspondence between segments in both figures. }
  \label{f1}
  \end{figure}%
Denote $\tty_{v,t}(x)[\sfX]$ the position at time $t$ of the quasiparticle associated to the ideal particle $(x,v,\err)$, defined by
\begin{align}
 \label{eq2}
  \tty_{v,t}(x)&:= D_0(x)+vt+\ttj(x,v,t)\\
 &= x+vt + \sfH(t,x+v t);            \label{yvt3}   
\end{align}
the identity \eqref{yvt3} is a consequence of  \eqref{jH8} and \eqref{D0X}.
Let $\sfY:=D_0\sfX$, and define the configuration $U_t\sfY[\sfX]$ by
\begin{align}
  \label{uty7}
  U_t\sfY:= \bigl\{( \tty_{v,t}(x),v,\err):(x,v,\err)\in\sfX\bigr\},
\end{align}
in particular $U_0\sfY=D_0\sfX=\sfY$. When all rods are nonnegative $(U_t)_{t\in\RR}$ is a group of operators: $U_0=$ Identity and $U_{t+s}=U_tU_s$. Otherwise there is no unique way to define $U_t\sfY$ as a function of $\sfY$. 
\begin{figure}[h!]
  \centering
  \includegraphics[width=.90\textwidth]{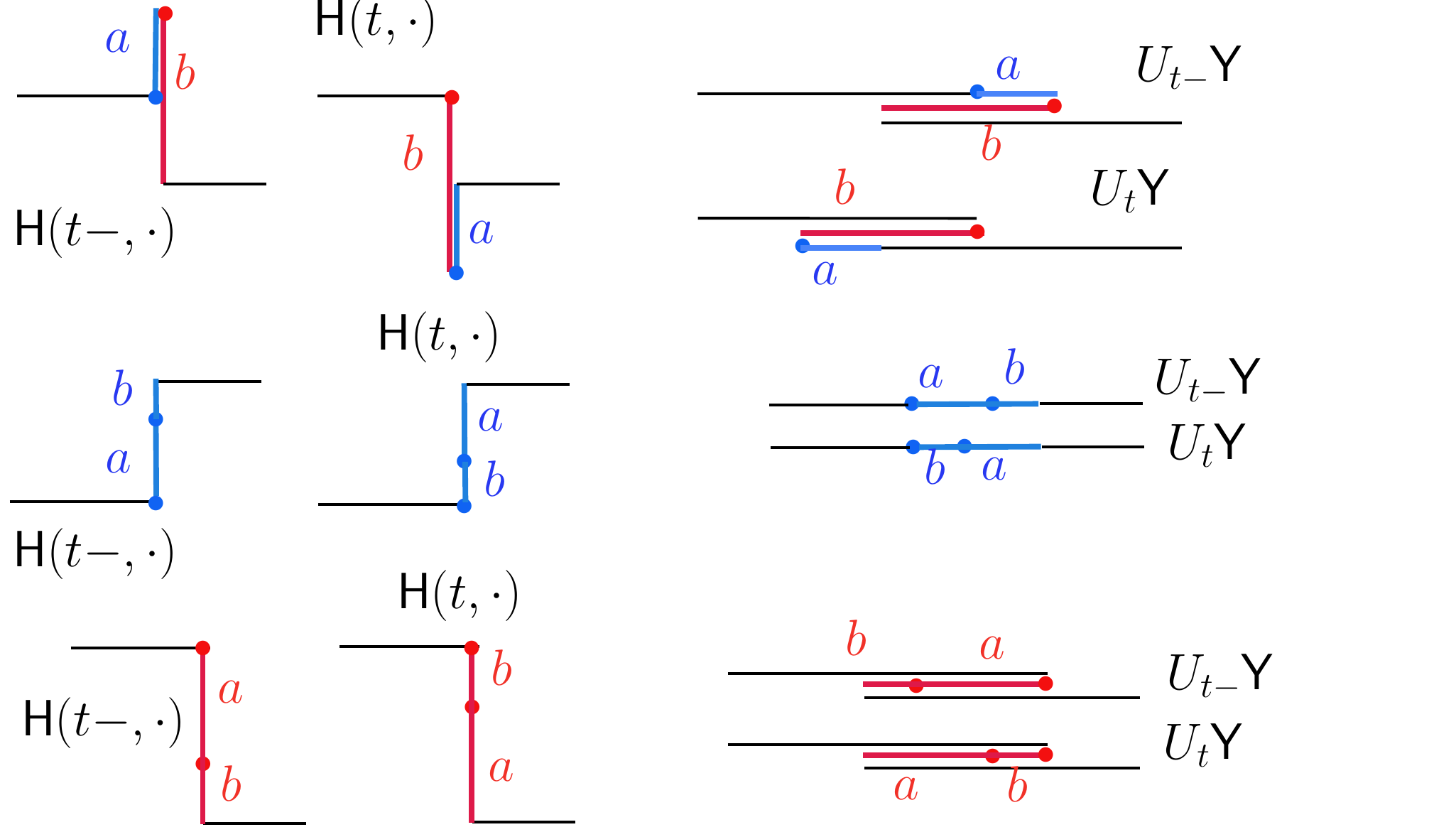}
  \caption{Effect of a collision in the field $\sfH[\sfX]$ and on the hard rod configuration $\sfY[\sfX]$. The left and middle columns picture the field $\sfH$ just before and after the collision, respectively. The right column shows the collision in the hard rod picture. In the first row we picture a positive rod (blue) and a negative one (red), before and after a collision. Second and third rows: collision between rods of the same sign. The labeled $a$ rod is faster than the labeled $b$ one  }
  \label{f2}
\end{figure}

Let $\euF$ be a set of intensities $f:\RR^3\to\RR_{\ge0}$ defined later in \eqref{sigma>0}.
Let $\sfX^\eps$ be a Poisson process on $\RR^3$ with intensity $\eps^{-1}f$ and call $\PP$ and $\EE$ the probability and expectation associated to the family $(\sfX^\eps)_{\eps>0}$.
Define the rescaled field $\sfH^\eps[\sfX^\eps]$ by
\begin{align*}
 \sfH^\eps(t,x):= \eps\sum_{(z,w,\err)\in\sfX^\eps} \err\bigl(\one\{z\ge0,\,z+w t<x\}- \one\{z<0,\,z+w t\ge x\}\bigr).
\end{align*}
The rescaled quantities are the functions of $\sfX^\eps$ defined by
\begin{align}
  \ttj^\eps(x,v,t) &:= \sfH^\eps(t,x+vt) - \sfH^\eps(0,x),\\
  D_a^\eps(x) &:= x+\sfH^\eps(0,x)-\sfH^\eps(0,a),\\
  D_a^\eps\sfX^\eps &:=\bigl\{\bigl(D^\eps_a(x),v,\err\bigr):(x,v,\err)\in\sfX^\eps\bigr\}, \label{yet1}\\
  \tty^\eps_{v,t}(x)&:=  x+vt+ \sfH^\eps(t,x+vt),\\
 U_t^\eps\sfY^\eps&:= \bigl\{(\tty^\eps_{v,t}(x),v,\err): (x,v,\err)\in\sfX^\eps\bigr\}.
\end{align}
Their expectations are denoted by
\begin{align}
   H(t,x) & :=\EE \sfH^\eps(t,x)
            = \iiint f(z,w,\err) \,\err\bigl(\one\{z>0,\,z+w t<x\}\label{josefa16}\\
          & \qquad\qquad\qquad\qquad\qquad - \one\{z<0,\,z+w t>x\}\bigr)\,dzdwd\err.\notag\\
 j(x,v,t)  &:= \EE\ttj^\eps(x,v,t) = H(t,x+vt)- H(0,x), \label{josefa13}\\
y_{v,t}(x) &:= \EE\tty^\eps_{v,t}(x) =x+vt + H(t,x+vt);
\end{align}
the second identity in \eqref{josefa16} follows from Campbell's theorem.

The rescaled random quantities converge to their expectations, a classical result for Poisson processes, shown in the next theorem.
\begin{theorem}[\rm Law of large numbers] 
  \label{tlln1}
  Assume $f\in\euF$. Then,
  for each $x,v,t\in\RR$ the following limits hold $\PP$-a.s.:
  \begin{align}
  \lim_{\eps\to0}   \sfH^\eps(t,x) = H(t,x),\quad
    \lim_{\eps\to0}   \ttj^\eps(x,v,t)  =  j(x,v,t),\quad
   \lim_{\eps\to0}  \tty^\eps_{v,t}(x)  =  y_{v,t}(x). \notag
  \end{align}
\end{theorem}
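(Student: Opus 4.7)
The first of the three limits does all the work: once $\sfH^\eps(t,x)\to H(t,x)$ is settled, the other two follow from the defining identities $\ttj^\eps(x,v,t)=\sfH^\eps(t,x+vt)-\sfH^\eps(0,x)$ and $\tty^\eps_{v,t}(x)=x+vt+\sfH^\eps(t,x+vt)$ by applying the first limit at the two fixed space-time points $(0,x)$ and $(t,x+vt)$. So my plan is to prove $\sfH^\eps(t,x)\to H(t,x)$ a.s.\ as an instance of the strong law of large numbers for compound Poisson functionals. Set
\[
g_{t,x}(z,w,\err):=\err\bigl(\one\{z\ge0,\,z+wt<x\}-\one\{z<0,\,z+wt\ge x\}\bigr),
\]
so that $\sfH^\eps(t,x)=\eps\sum_{(z,w,\err)\in\sfX^\eps}g_{t,x}(z,w,\err)$ is a Poisson linear functional. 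Campbell's theorem, applied to the Poisson process of intensity $\eps^{-1}f$, gives $\EE\sfH^\eps(t,x)=H(t,x)$ and $\Var\sfH^\eps(t,x)=\eps\iiint g_{t,x}^{2}\,f\,dz\,dw\,d\err$. The integrability built into the definition of $\euF$ (equation \eqref{sigma>0}) will be used precisely to make these integrals finite; Chebyshev then already yields $L^2$ and in-probability convergence, and only the upgrade to a.s.\ remains.

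\textbf{Monotone coupling and SLLN.} I would realize the whole family $(\sfX^\eps)_{\eps>0}$ on one probability space by taking a single Poisson process $\hat\sfX$ on $\RR^3\times\RR_{\ge0}$ of intensity $f(z,w,\err)\,dz\,dw\,d\err\otimes d\mu$ and defining $\sfX^\eps:=\{(z,w,\err):(z,w,\err,\mu)\in\hat\sfX,\ \mu\le\eps^{-1}\}$, which has intensity $\eps^{-1}f$ and is monotone in $\eps$. For each fixed $(t,x)$, introduce the two subordinators
\[
W^{\pm}(\lambda):=\sum_{\substack{(z,w,\err,\mu)\in\hat\sfX\\ \mu\le\lambda}}\bigl(g_{t,x}(z,w,\err)\bigr)_{\pm},
\qquad\lambda\ge0,
\]
so that $\sfH^\eps(t,x)=\eps\bigl(W^{+}(\eps^{-1})-W^{-}(\eps^{-1})\bigr)$. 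Each $W^\pm$ has stationary independent increments, and its integer-time increments $W^\pm(n)-W^\pm(n-1)$ are i.i.d.\ with finite mean $\iiint(g_{t,x})_{\pm}f$. Kolmogorov's SLLN gives $W^\pm(n)/n\to\iiint(g_{t,x})_{\pm}f$ a.s., and monotonicity of $W^\pm$ sandwiches the continuous parameter $\lambda$ between $\lfloor\lambda\rfloor$ and $\lceil\lambda\rceil$. Setting $\lambda=\eps^{-1}\to\infty$ yields
\[
\sfH^\eps(t,x)\;\xrightarrow[\eps\to0]{\text{a.s.}}\;\iiint g_{t,x}\,f\,dz\,dw\,d\err\;=\;H(t,x).
\]

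\textbf{Main obstacle.} The only substantive point to check is that the hypothesis $f\in\euF$, which will be made precise by \eqref{sigma>0}, implies $\iiint|g_{t,x}|\,f\,dz\,dw\,d\err<\infty$ for every fixed $(t,x)$. The set $\{g_{t,x}\neq 0\}$ is an unbounded wedge in the $(z,w)$-plane, so $f$ needs enough decay in the velocity variable $w$ together with a first-absolute-moment control in $\err$ for this to hold. Once that integrability is verified, the Kolmogorov SLLN step is automatic and no quantitative rate of convergence is required; the statements for $\ttj^\eps$ and $\tty^\eps_{v,t}$ follow by adding and scaling two a.s.-convergent scalars.
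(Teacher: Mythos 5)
Your proposal is correct and follows essentially the same route as the paper: both realize the whole family $(\sfX^\eps)_{\eps>0}$ as slices of a single Poisson process on $\RR^3\times\RR$ with intensity $f\,dz\,dw\,d\err\otimes d\mu$, decompose the sum into i.i.d.\ unit-slab contributions, apply Kolmogorov's SLLN (with integrability of $g_{t,x}$ against $f$ supplied by the moment conditions defining $\euF$), and then read off the limits for $\ttj^\eps$ and $\tty^\eps_{v,t}$ from the identities expressing them through $\sfH^\eps$. Your positive/negative-part subordinator sandwich is a small refinement that handles non-integer $\eps^{-1}$, which the paper's proof of Proposition \ref{llnCLS} simply restricts away.
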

Define 
macroscopic dilation function $\D_a[f]$ by
\begin{align}
  \label{d0f}
  \D_a(x):=x+\int_a^x   \Bigl(\iint \err \,f(z,v,\err) dvd\err\Bigr)\,dz,
\end{align}
The set $\euF$ includes the condition $\sigma_f(z)>0$ for all $z$, implying that $\D_a[f]$ is invertible. Denote $\C_a[f]$ its inverse and define
\begin{align}
  \ttu^\eps_{v,t}(q)&:= \tty_{v,t}^\eps(\C_0(q)) = q+vt+\ttj^\eps(\C_0(q),v,t),\\
   \label{uvti}
  u_{v,t} (q)&:= y_{v,t}\bigl(\C_0(q)\bigr)= q+vt+j(\C_0(q),v,t).
\end{align}
\begin{corollary}[\rm Quasiparticle law of large numbers]
  \label{llnut4}
 Let $f\in\euF$. Then
 \begin{align}
  \label{test-limit2}
  \lim_{\eps\to0}\, \ttu^\eps_{v,t}(q) = u_{v,t} (q),\quad \PP\text{-a.s.},\quad q,v,t\in\RR.
 \end{align}
 If $f(x,v,\err)=0$ for $\err<0$, then the microscopic dilation $D^\eps_0$ is invertible and for all $q,v,t\in\RR$, 
\begin{align}
  \label{tut7}
  \tilde\ttu^\eps_{v,t}(q):=\tty^\eps_{v,t}(C^\eps_0(q))\; \mathop{\longrightarrow}_{\eps\to0}\; y_{v,t}(\C_0(q))= u_{v,t} (q),\quad \PP\text{-a.s.}.
  \end{align}
\end{corollary}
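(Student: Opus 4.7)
The plan is to reduce both limits to Theorem \ref{tlln1} by exploiting the monotonicity structure of $\sfH^\eps$ in the nonnegative-rod setting. The first statement \eqref{test-limit2} is essentially immediate. By the identity $\ttu^\eps_{v,t}(q) = q + vt + \ttj^\eps(\C_0(q), v, t)$, and since $\C_0(q)$ is a deterministic real number once $q$ is fixed, one applies Theorem \ref{tlln1} at the point $x = \C_0(q)$ to obtain $\ttj^\eps(\C_0(q), v, t) \to j(\C_0(q), v, t)$ a.s., giving $\ttu^\eps_{v,t}(q) \to u_{v,t}(q)$.

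For \eqref{tut7} the work lies in showing $C^\eps_0(q) \to \C_0(q)$ a.s. Under the hypothesis $f(x,v,\err)=0$ for $\err<0$, the field $x\mapsto \sfH^\eps(0,x)$ has only nonnegative jumps, so $D^\eps_0(x)=x+\sfH^\eps(0,x)$ is strictly increasing and invertible. My first step is to upgrade the pointwise a.s.\ convergence $D^\eps_0(x)\to \D_0(x)$ (obtained from Theorem \ref{tlln1} along a countable dense set of $x$, yielding a single full-probability event) to uniform convergence on compact intervals. Both $D^\eps_0$ and $\D_0$ are nondecreasing, and the limit $\D_0(x)=x+\int_0^x \sigma_f(z)\,dz$ is continuous with $\sigma_f>0$; a standard Polya-type lemma then gives the upgrade. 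The classical result on convergence of inverses of monotone functions with continuous strictly increasing limit delivers $C^\eps_0(q) \to \C_0(q)$ a.s.

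Armed with this, I would write $\tilde\ttu^\eps_{v,t}(q) = C^\eps_0(q) + vt + \sfH^\eps(t, C^\eps_0(q)+vt)$ and pass to the limit term by term. The first two summands converge by the previous step. For the third, the same Polya argument applied at time $t$ gives $\sfH^\eps(t,\cdot)\to H(t,\cdot)$ uniformly on compacts, since $\sfH^\eps(t,\cdot)$ is nondecreasing when $\err\ge 0$ and $H(t,\cdot)$ is continuous (it is absolutely continuous in $x$ whenever $f$ is integrable). Combined with $C^\eps_0(q)+vt \to \C_0(q)+vt$, this yields $\sfH^\eps(t, C^\eps_0(q)+vt) \to H(t,\C_0(q)+vt)$ a.s., and summing the three limits recovers $y_{v,t}(\C_0(q))=u_{v,t}(q)$.

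The main obstacle is precisely this last joint limit: one must evaluate the random field $\sfH^\eps(t,\cdot)$ at the random argument $C^\eps_0(q)+vt$, which requires uniform (not just pointwise) control of $\sfH^\eps(t,\cdot)$ on a neighborhood of $\C_0(q)+vt$. This is why the nonnegativity assumption on $\err$ is used twice, first to guarantee the monotonicity needed for the Polya upgrade, and second to make $D^\eps_0$ one-to-one so that $C^\eps_0$ is well-defined at all. When negative rod lengths are admitted, $D^\eps_0$ need not be invertible and no natural candidate for $\tilde\ttu^\eps_{v,t}$ exists, which is exactly why the statement separates the general case \eqref{test-limit2}, where the deterministic macroscopic inverse $\C_0$ suffices, from the special case \eqref{tut7}.
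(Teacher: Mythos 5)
Your proposal is correct and follows the same route the paper takes: the first limit is read off directly from Theorem \ref{tlln1} at the deterministic point $x=\C_0(q)$, and the second is obtained by first passing $C^\eps_0(q)\to\C_0(q)$ and then evaluating the field at the random argument. The paper itself states the corollary without a written proof (only the remark following it), so your Polya-type upgrade of the pointwise convergence of the monotone functions $D^\eps_0$ and $\sfH^\eps(t,\cdot)$ to uniform convergence on compacts, needed to justify $\sfH^\eps(t,C^\eps_0(q)+vt)\to H(t,\C_0(q)+vt)$, is exactly the missing detail and is correctly handled.
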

The random initial point $\ttu^\eps_{v,0}(q)=D_0^\eps(\C_0(q))$ converges to $\D_0(\C_0(q))=q$, as $\eps\to0$, by \eqref{eq2} and Theorem \ref{tlln1}. This choice permits us to compute the limit when the microscopic dilation $D^\eps_a(x)$ is not invertible, due to the presence of negative rods. When all rods are positive, we can start with a deterministic $q$, to get the same limit \eqref{tut7}.

We now discuss the convergence of the length fields. Along the paper we denote by $\varphi:\RR^3\to\RR$ a generic test function satisfying
\[
  \int f(x,v,\err)\, (v^ 2+\err^2+1)\,\varphi(x,v,\err)\,dxdvd\err<\infty.
\]
Define  $\ttK^\eps_t[\sfX^\eps]$, the  \emph{empirical length measure} at time $t$, acting on test functions $\varphi$ by
\begin{align}
  \ttK^\eps_t \varphi
  &:= \;\eps\,\sum_{(y,v,\err)\,\in\, U_t\sfY^\eps}\,r\,\varphi(y,v,\err)
    = \;\eps\,\sum_{(x,v,\err)\,\in\, \sfX^\eps}  \err\,\varphi(\tty^\eps_{v,t}(x),v,\err),
\end{align}
by \eqref{uty7}. Let $\kappa_t$ be the length measure defined by 
\begin{align}
  \label{kt14}
 \kappa_t\varphi :=  \iiint f(x,v,\err)\,r\,\varphi(y_{v,t}(x),v,\err)\,dxdvd\err, \quad t\in \RR.
\end{align}
 The next result is a particular case of Theorem \ref{lln}, proved in Section~\ref{S4}.
\begin{theorem}[\rm Law of large numbers for the empirical measure]
  \label{llnhr}
Let $f\in \euF$, then 
 \begin{align}
    \lim_{\eps\to0} \ttK^\eps_t \varphi 
   = \kappa_t \varphi, \qquad\PP\text{-a.s. and in $L_1$},
  \end{align}
  for all $t\in\RR$ and test function $\varphi$.
\end{theorem}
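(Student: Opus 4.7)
The plan is to split $\ttK^\eps_t\varphi$ into a Poisson sum evaluated at the deterministic limit positions plus an error from replacing $\tty^\eps_{v,t}$ by $y_{v,t}$. Concretely, write $\ttK^\eps_t\varphi = A_\eps + B_\eps$ with
\[
A_\eps := \eps\sum_{(x,v,\err)\in\sfX^\eps}\err\,\varphi(y_{v,t}(x),v,\err),\qquad B_\eps := \ttK^\eps_t\varphi - A_\eps.
\]
The integrand in $A_\eps$ is non-random, so Campbell's theorem applied to the Poisson process of intensity $\eps^{-1}f$ gives $\EE A_\eps = \kappa_t\varphi$ directly from \eqref{kt14}. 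The variance is $\Var(A_\eps) = \eps\iiint f(x,v,\err)\,\err^2\varphi(y_{v,t}(x),v,\err)^2\,dxdvd\err$, finite by the integrability hypothesis on $\varphi$ and vanishing linearly in $\eps$. This yields $L_1$ convergence, and a Chebyshev plus Borel-Cantelli argument along $\eps_n = 1/n$ upgrades it to almost-sure convergence (standard monotone interpolation handles the full $\eps\to 0$ limit under the natural Poisson coupling).

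For the remainder $B_\eps$, the identity $\tty^\eps_{v,t}(x)-y_{v,t}(x) = \sfH^\eps(t,x+vt)-H(t,x+vt)$ together with uniform continuity of $\varphi$ on compacts reduces matters to proving
\[
\lim_{\eps\to 0}\;\sup_{x\in[-M,M]}|\sfH^\eps(t,x)-H(t,x)| \;=\; 0,\qquad \PP\text{-a.s.,}\ \forall M>0.
\]
Pointwise a.s. convergence here is Theorem \ref{tlln1}; I would upgrade to uniform convergence by a Glivenko-Cantelli style argument on a finite $\delta$-net of $[-M,M]$, using continuity of $H(t,\cdot)$ under $f\in\euF$ and bounding the oscillation of $\sfH^\eps$ between net points by the total Poisson mass of rods whose trajectories cross the subinterval, which itself converges to the corresponding deterministic mass. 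Together with uniform continuity of $\varphi$ this gives $|B_\eps|\le \omega_\varphi(o(1))\cdot\eps\sum|\err|\,\one_{K}$, and the $A_\eps$ argument applied to $|\err|\one_K$ controls the second factor. For $\varphi$ without compact support, I would introduce a smooth cutoff $\chi_M$, apply the argument to $\varphi\chi_M$, and control the tail by Campbell applied to $|\err\varphi(1-\chi_M)|$; the integrability hypothesis on $f$ including $(v^2+\err^2+1)\varphi$ ensures the tail contributions are uniformly small in $\eps$.

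The main obstacle is the Glivenko-Cantelli uniformization of $\sfH^\eps(t,\cdot)$. Classical monotone arguments for empirical processes do not apply directly because $\err$ takes both signs, so the field is not monotone in $x$. I would split $\sfX^\eps$ into its positive- and negative-length parts, which, by the Poisson colouring theorem, yield two independent Poisson processes; each gives rise to a monotone càdlàg partial field to which the classical uniformization applies, and recombining yields the desired uniform bound on $\sfH^\eps-H$. Once this step is in place the rest of the proof is a routine combination of Campbell's theorem, uniform continuity, and truncation.
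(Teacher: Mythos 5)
Your decomposition is the same as the paper's: the paper also writes $\ttK^\eps_t\varphi=\ttA^\eps_t\varphi+(\ttK^\eps_t\varphi-\ttA^\eps_t\varphi)$ with $\ttA^\eps_t\varphi$ the Poisson sum evaluated at the deterministic positions $y_{v,t}(x)$, handles $\ttA^\eps_t\varphi$ by a law of large numbers (via the iid superposition $\sfX^\eps=\bigcup_i\sfX_i$ and Kolmogorov's SLLN rather than your Chebyshev route), and controls the error by comparing $\tty^\eps_{v,t}$ with $y_{v,t}$. Where you genuinely diverge is in the error term: the paper works with test functions of the form $\phi(v,\err)\,\one\{y\in[a,b]\}$ and exploits monotonicity of $x\mapsto\tty^\eps_{v,t}(x)$ and $x\mapsto y_{v,t}(x)$ to localize the discrepancy of the indicators near the preimages of $a$ and $b$, then squeezes with $a'<a<a''$; you instead prove a Glivenko--Cantelli-type uniform convergence of $\sfH^\eps(t,\cdot)$ on compacts and invoke uniform continuity of $\varphi$. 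Your uniformization lemma is correct and provable exactly as you say (split by the sign of $\err$, use pointwise a.s. convergence of monotone functions to a continuous limit), and both proofs ultimately rest on the same sign-splitting and monotonicity; your route buys a cleaner statement for continuous $\varphi$, while the paper's buys the indicator case directly, which is what the name ``empirical measure'' requires.

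Two points need repair. First, the theorem's test functions are constrained only by integrability, and the paper's Theorem \ref{lln} is proved precisely for the discontinuous $\varphi=\phi(v,\err)\one\{y\in[a,b]\}$; your uniform-continuity step does not apply to these. The fix is available from your own lemma: if $\sup_{x}|\sfH^\eps(t,x)-H(t,x)|\le\delta_\eps\to0$ locally uniformly, then $|\one\{\tty^\eps_{v,t}(x)\in[a,b]\}-\one\{y_{v,t}(x)\in[a,b]\}|\le\one\{y_{v,t}(x)\in[a-\delta_\eps,a+\delta_\eps]\cup[b-\delta_\eps,b+\delta_\eps]\}$, and the $A_\eps$-type estimate kills this; but as written the argument has a gap for exactly the test functions the theorem is about. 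Second, $\Var(A_{\eps})=\eps\iiint f\err^2\varphi^2$ gives $\PP(|A_{1/n}-\kappa_t\varphi|>\delta)\le C/(n\delta^2)$, which is not summable, so Chebyshev plus Borel--Cantelli along $\eps_n=1/n$ does not yield a.s.\ convergence; you need a sparser subsequence such as $\eps_n=n^{-2}$ followed by monotone interpolation (again after splitting by the sign of $\err\varphi$), or simply the iid-superposition SLLN the paper uses.
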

 Define the random field $\eta^\eps[\sfX^\eps]$ by
\begin{align}
    \eta^\eps(t,z) := \frac1{\sqrt\eps} \Bigl(\sfH^\eps(t,z)-H(t,z)\Bigr).
\end{align}
The positional hard rod fluctuations satisfy
\begin{align}
 \frac1{\sqrt\eps} \Bigl(\tty^\eps_{v,t}(x) -y_{v,t}(x)\Bigr) =   \eta^\eps(t,x+vt).
\end{align}

A  \emph{Levy Chentsov field} associated to a distance $\dd$ in $\RR^2$ is a centered Gaussian process $\eta:\RR^2\to\RR$ with $\eta(\bo)=0$  and covariances
  \begin{align}
    \Cov \bigl(\eta(\ba),\eta(\bb)\bigr) =
\frac12\bigl(\dd(\bo,\ba)+\dd(\bo,\bb)-\dd(\ba,\bb)\bigr),\quad \ba,\bb\in\RR^2.
  \end{align}
  Here $\bo$ is the origin of $\RR^2$. Denote $[\ba,\bb]$ the one dimensional segment contained in $\RR^2$ with extremes $\ba$ and $\bb$. The following theorem is proved in Section~\ref{S4}.
  
\begin{theorem}[\rm Positional fluctuations]
  \label{crf}
  Let $f\in\euF$. As $\eps\to0$, the finite dimensional distributions of the random field $\eta^\eps$ 
  converge in distribution to those of the Levy Chentsov field $\eta:\RR^2\to\RR$, associated to the distance
\begin{align}
  \dd(\ba,\bb) := \iiint f(x,v,\err)\, \err^2 \,
  \one\{(x+vt)_{t\in\RR} \text{ intersects } [\ba,\bb]\}\, dxdvd\err.
\end{align}
In particular, the positional fluctuation random field converges to the Levy Chentsov field:
\begin{align}
  \lim_{\eps\to0}   \Bigl(\frac1{\sqrt\eps} \bigl(\tty^\eps_{v,t}(x) -y_{v,t}(x)\bigr) \Bigr)_{(x,v,t)\in N} \eqlaw  \Bigl(
  \eta(x,x+vt)  \Bigr)_{(x,v,t)\in N}
\end{align}
for any finite set $N\subset\RR^3$. 
\end{theorem}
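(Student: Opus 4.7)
The plan is to recognize $\eta^\eps$ as a compensated Poisson integral, apply the exponential formula for Poisson functionals to get an exact expression for the characteristic functional, and then identify the Gaussian limit with the Levy Chentsov field by a short geometric argument for lines through a triangle in the time-space plane.

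For $\ba=(t,x)\in\RR^2$ set
\[
g_{\ba}(z,w,\err):= \err\bigl(\one\{z\ge 0,\,z+wt<x\}-\one\{z<0,\,z+wt\ge x\}\bigr),
\]
so that $\sfH^\eps(t,x)=\eps\sum_{p\in\sfX^\eps}g_{\ba}(p)$, $H(t,x)=\int g_{\ba}f$, and $\eta^\eps(\ba)=\sqrt\eps\bigl(\sum_{p\in\sfX^\eps}g_{\ba}(p)-\eps^{-1}\!\int g_{\ba}f\bigr)$ is a centered compensated Poisson integral. Fix $\ba_1,\ldots,\ba_n\in\RR^2$ and $\theta_1,\ldots,\theta_n\in\RR$, put $\tilde g := \sum_{i}\theta_i g_{\ba_i}$. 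Campbell's exponential formula for a Poisson process of intensity $\eps^{-1}f$ yields
\[
\log\EE\bigl[\exp\bigl(i\textstyle{\sum_i}\theta_i\eta^\eps(\ba_i)\bigr)\bigr]
= \eps^{-1}\!\int\bigl(e^{i\sqrt\eps\,\tilde g}-1-i\sqrt\eps\,\tilde g\bigr)\,f.
\]
The inequality $|e^{iu}-1-iu|\le u^2/2$ bounds the integrand by $\tfrac12 \tilde g^2 f$ uniformly in $\eps$, and the condition $f\in\euF$ makes $\int\tilde g^2 f$ finite since each $g_{\ba}^2\le \err^2\,\one\{\text{the line }s\mapsto(s,z+ws)\text{ crosses }[\bo,\ba]\}$ is supported in a bounded region of the time-zero slice. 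Dominated convergence then gives
\[
\log\EE\bigl[\exp\bigl(i\textstyle{\sum_i}\theta_i\eta^\eps(\ba_i)\bigr)\bigr]\;\longrightarrow\;-\tfrac12\!\int\!\tilde g^2 f\;=\;-\tfrac12\sum_{i,j}\theta_i\theta_j\!\int\!g_{\ba_i}g_{\ba_j}\,f.
\]

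It remains to identify $\int g_{\ba}g_{\bb}f$ with the Levy Chentsov covariance $\tfrac12\bigl(\dd(\bo,\ba)+\dd(\bo,\bb)-\dd(\ba,\bb)\bigr)$. Factor $g_{\ba}=\err\,s_{\ba}$ with $s_{\ba}(z,w)\in\{-1,0,+1\}$; then $s_\ba\neq 0$ iff the line $L_{z,w}:=\{(s, z+ws):s\in\RR\}$ crosses $[\bo,\ba]$, with sign depending only on which side of $L_{z,w}$ contains $\bo$. Splitting on $z\ge 0$ and $z<0$ one checks directly that $s_{\ba}s_{\bb}=\one\{L_{z,w}\text{ crosses both }[\bo,\ba]\text{ and }[\bo,\bb]\}$. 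Since any straight line in $\RR^2$ meets either $0$ or $2$ of the three edges of the triangle $\bo\ba\bb$, one obtains the pointwise identity
\[
2\,\one\{L\text{ crosses }[\bo,\ba]\text{ and }[\bo,\bb]\}
=\one\{L\text{ crosses }[\bo,\ba]\}+\one\{L\text{ crosses }[\bo,\bb]\}-\one\{L\text{ crosses }[\ba,\bb]\},
\]
which, integrated against $\err^2 f$, produces the claimed covariance identity.

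Combining these ingredients, the limiting characteristic function is that of a centered Gaussian vector with the Levy Chentsov covariance associated to $\dd$, and L\'{e}vy's continuity theorem delivers the convergence of finite-dimensional distributions. The positional-fluctuation assertion is then immediate from the identity $\frac1{\sqrt\eps}\bigl(\tty^\eps_{v,t}(x)-y_{v,t}(x)\bigr)=\eta^\eps(t,x+vt)$ displayed just above the theorem. The main obstacle is the sign bookkeeping that yields $s_{\ba}s_{\bb}=\one\{L\text{ crosses both }[\bo,\ba],[\bo,\bb]\}$ together with the two-or-zero edge property; once this geometric identity is secured, the remainder is a standard Poisson CLT.
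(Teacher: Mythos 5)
Your proposal is correct, and it reaches the conclusion by a somewhat different mechanism than the paper. The paper first reduces the theorem to Proposition 3.3 via the identity $\frac1{\sqrt\eps}(\tty^\eps_{v,t}(x)-y_{v,t}(x))=\eta^\eps(t,x+vt)$ (exactly your last step), and then proves the field CLT by writing $\sfH^\eps(\ba)=\ttK^\eps(\boa_+)-\ttK^\eps(\boa_-)$, decomposing $\sfX^\eps$ (for integer $\eps^{-1}$) as a superposition of $\eps^{-1}$ iid Poisson processes of intensity $\mu$, and invoking the classical CLT for iid sums together with Campbell's theorem for the covariances; the covariance is identified with the Levy Chentsov form via the disjoint-union identity $\bab\,\dot\cup\,(\boa\cap\bob)=\boa\cup\bob$, which is precisely your ``a line meets zero or two edges of the triangle $\bo\ba\bb$'' observation in set form. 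You instead compute the joint characteristic functional of the compensated Poisson integrals exactly via Campbell's exponential formula and pass to the limit by dominated convergence using $|e^{iu}-1-iu|\le u^2/2$. What your route buys: it works for all $\eps>0$ without the integer-$\eps^{-1}$ reduction, and it gives joint convergence of all finite-dimensional marginals in one stroke (the paper's one-set statement \eqref{clt3} needs an implicit Cram\'er--Wold/multivariate CLT step to cover joint laws). What the paper's route buys: brevity, by outsourcing the limit to the standard iid CLT. Two small points to tighten in your write-up: the justification that $\int\tilde g^2 f<\infty$ is not that the support is bounded in the time-zero slice (for $t\ne0$ the set of $(z,w)$ whose line meets $[\bo,\ba]$ is unbounded in $w$; only the $z$-section for fixed $w$ is a bounded interval of length of order $|x-wt|$), so the finiteness of $\mu_2(\boa)$ should be quoted from the moment conditions defining $\euM$ and $\euF$, as the paper implicitly does; and the sign bookkeeping $s_\ba s_\bb=\one\{L\text{ crosses both}\}$, which you correctly flag as the crux, is exactly the reason the paper splits $\boa$ into $\boa_+$ and $\boa_-$ and tracks lengths with signs.
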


\paragraph{Macroscopic evolution} Let $f\in\euF$, and  define
\begin{align}
  g(q,v,\err) &:= f(\C_0(q),v,\err) \frac{d}{dq}\C_0(q),\\
  g_t (q,v,\err) &:= f (y_{v,t}^{-1}(q),v,\err)\frac{d}{dq}y_{v,t}^{-1}(q).
\end{align}
Notice that $g_0=g$ and that $g_t$  satisfies
\begin{align}
  \label{gtf9}
   \kappa_t\varphi=\iiint g_t(q,v,\err)\,\err\,\varphi(q,v,\err)\,dqdvd\err.
\end{align}
The following theorem, proved in  Section \ref{S5}, describes the equation for the macroscopic hard rod evolution $(g_t)_{t\in\RR}$. 
\begin{theorem}
  \label{cauchy-theorem}
 Let $f\in \euF$ such that $\sigma_f(z)>0$ for all $z$. Then $g_t$ is the unique solution of the Cauchy problem 
\begin{align}\label{cauchy-g}
  \partial_t g_t(q,v,\err) &=- \partial_q\bigl(g_t(q,v,\err)\,v^{\eff}(q,v,t) \bigr),\\[1mm]
    v^{\eff}(q,v,t)&= v+\frac{\iint\,\err\,(v-w)\,g_t(q,w,\err)\,dw\,d\err}
  {1 - \iint\,\err\, g_t(q,w,\err)\,dw\,d\err},\\
   g_0(q,v,\err) &= f (y_{v,t}^{-1}(q),v,\err)\frac{d}{dq}y_{v,t}^{-1}(q).
\end{align}
Furthermore, the limiting trajectory $u_{v,t}(q)$, considered as a function of $t$, satisfies the Cauchy problem
\begin{align}
    \frac{\partial}{\partial t} u_{v,t}(q) &= v^{\eff}(u_{v,t}(q),v,t)\label{cauchy-u}\\
     u_{v,0}(q) &= q
  \end{align}
\end{theorem}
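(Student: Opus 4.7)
The strategy is to reduce both Cauchy problems to direct computations based on the explicit formula $y_{v,t}(x)=x+vt+H(t,x+vt)$ from \eqref{yvt3} and on the pushforward characterization of $g_t$.

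I would first verify the trajectory equation \eqref{cauchy-u}. Fix $q$ (and hence $a:=\C_0(q)$) and set $X:=a+vt$, so that by \eqref{yvt3} one has $u_{v,t}(q)=X+H(t,X)$. Differentiating in $t$ and, by the moment bounds encoded in $\euF$, differentiating under the integral in \eqref{josefa16} yields
\[
\partial_x H(t,x)=\iint r\,f(x-wt,w,r)\,dw\,dr,\qquad \partial_t H(t,x)=-\iint rw\,f(x-wt,w,r)\,dw\,dr.
\]
Writing $\rho:=\partial_x H(t,X)$ and $\mu:=\iint rw\,f(X-wt,w,r)\,dw\,dr$, this gives $\partial_t u_{v,t}(q)=v(1+\rho)-\mu$.

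The crucial step is to match this with $v^{\eff}$ at $Q:=u_{v,t}(q)$. Here the hard-rod structural feature enters: since $y_{w,t}(x)=(x+wt)+H(t,x+wt)$ depends on $w$ only through the combination $X_w:=x+wt$, the equation $y_{w,t}(x_w)=Q$ reduces to $X_w+H(t,X_w)=Q$, which is \emph{independent of $w$}. The assumption $\sigma_f>0$ makes $x\mapsto x+H(t,x)$ strictly increasing, so this equation has a unique solution, and it is the same $X$ for every $w$. The pushforward definition of $g_t$ therefore gives
\[
g_t(Q,w,r)=\frac{f(X-wt,w,r)}{1+\rho}.
\]
Integrating against $r$ and against $r(v-w)$ and substituting into the definition of $v^{\eff}$ produces $v^{\eff}(Q,v,t)=v+(v\rho-\mu)=v(1+\rho)-\mu$, matching the previous paragraph and establishing \eqref{cauchy-u}.

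The conservation law \eqref{cauchy-g} then follows by a standard pushforward argument. For any smooth test function $\varphi$ and fixed $(v,r)$, the identity $\int g_t(q,v,r)\varphi(q)\,dq=\int f(x,v,r)\varphi(y_{v,t}(x))\,dx$ is differentiated in $t$; using \eqref{cauchy-u} to replace $\partial_t y_{v,t}(x)$ by $v^{\eff}(y_{v,t}(x),v,t)$ and changing variables back to $q$ gives $\int g_t(q,v,r)\,v^{\eff}(q,v,t)\,\varphi'(q)\,dq$, and integration by parts yields \eqref{cauchy-g} in weak, hence pointwise, form. Uniqueness of the coupled system $(g_t,u_{v,t})$ follows from a Grönwall estimate along Lipschitz characteristics, keeping the denominator $1-\iint r\,g_t\,dw\,dr=1/(1+\rho)$ uniformly positive on compact sets thanks to $\sigma_f>0$. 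The main obstacle is the $w$-independence of $X_w$ in the second paragraph: this is the algebraic feature that distinguishes hard rods among GHD models and is precisely what produces the rational form of $v^{\eff}$. The remaining ingredients --- differentiation under the integral, change of variables, integration by parts, and a Grönwall-type uniqueness argument --- are routine under the regularity encoded in $\euF$.
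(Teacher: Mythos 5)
Your verification half is correct and takes a genuinely different route from the paper. The paper proves that $g_t=\U_tg$ solves \eqref{cauchy-g} by working through the operator calculus $\U_t=S_{\tto_t}\D_0\T_t\C_0$, the group property (Lemma \ref{grouproperty}), and the evolution formula of Lemma \ref{lemevol}; it then only checks the PDE at $t=0$ and bootstraps. You instead differentiate the closed formula $y_{v,t}(x)=x+vt+H(t,x+vt)$ directly, for all $t$ at once. Your computations of $\partial_xH$ and $\partial_tH$ from \eqref{josefa16} are right, and your central observation --- that $y_{w,t}(x)=X_w+H(t,X_w)$ with $X_w=x+wt$, so that $y_{w,t}^{-1}(Q)+wt$ is the \emph{same} point $X$ for every $w$ --- is exactly the structural fact that in the paper is hidden inside the quantity $\hat q+vt$ of Lemma \ref{lemevol}. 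From it your formula $g_t(Q,w,\err)=f(X-wt,w,\err)/(1+\rho)$ and the identity $v^{\eff}(Q,v,t)=v(1+\rho)-\mu=\partial_tu_{v,t}(q)$ follow cleanly, and the weak-formulation/pushforward derivation of \eqref{cauchy-g} from \eqref{cauchy-u} is standard and sound. What your route buys is a shorter, more transparent existence proof that establishes \eqref{cauchy-u} first and derives \eqref{cauchy-g} from it; what the paper's route buys is machinery (the commutation relations among $S,\D,\T,\C$ and the group property) that it reuses verbatim in the uniqueness argument. One point to make explicit either way: strict monotonicity of $x\mapsto x+H(t,x)$ for $t\neq0$ requires $1+\sigma_{\T_t\C_0g}>0$, i.e.\ that $\T_t$ preserves $\euF$, not merely $\sigma_f>0$ at time zero.

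The genuine gap is uniqueness, which you dispatch in one sentence (``a Gr\"onwall estimate along Lipschitz characteristics'') but which occupies roughly half of the paper's proof. The difficulty is that the characteristic field $v_h^{\eff}$ depends on the unknown solution $h$ through the nonlinear, nonlocal expression $(v-\zeta_h)/(1-\sigma_h)$, so a Gr\"onwall comparison of two solutions requires (a) an a priori guarantee that any competitor $h_t$ stays in $\euG$, i.e.\ $\normi{\sigma_{h_t}}<1$ uniformly, so the denominator is controlled --- your remark that the denominator equals $1/(1+\rho)$ only applies to the \emph{known} solution, not to the competitor; (b) a choice of norm in which $h\mapsto v_h^{\eff}$ and the induced characteristic flow are Lipschitz; and (c) the actual stability estimate. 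None of this is routine bookkeeping: the paper avoids Gr\"onwall entirely and instead shows directly (Parts 1--3 of its proof) that any solution $h$ with characteristics $u_{h,v,t}$ satisfies $h_t=$ pushforward of $g$ along $u_{h,v,t}$ and that $u_{h,v,t}=u_{g,v,t}$, using the continuity equation $\partial_t\sigma_h+\partial_q\zeta_h=0$ and the algebraic identity $v^{\eff}(q,v,t)-v^{\eff}(q,w,t)=(v-w)/(1-\sigma_h(q,t))$. A Dobrushin-style Gr\"onwall argument could plausibly be made to work, but as written your uniqueness claim is an assertion, not a proof.
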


\section{Line processes and random fields}
\label{S3}

\paragraph{Line measures and fields}
The \emph{ideal gas representation of lines} is given by 
the map
\begin{align}
  \label{x-lines}
  (x,v)\mapsto (t,x+vt)_{t\in \RR}.
\end{align}
This map is a bijection between $\RR^2$ and the space of straight lines contained in $\RR^2$, excluding the lines perpendicular to the $t$ axis $(t,x)_{x\in\RR}$. 
An ideal gas trajectory is seen as a line contained in $\RR^2$ with an \emph{orientation} from past to future time. 
We think of a point $(x,v,\err)$ in $\RR^3$ as the line $(t,x+vt)_{t\in \RR}$ associated with the \emph{mark} $\err\in\RR$. 

For time-space points $\ba,\bb\in\RR^2$, denote $[\ba,\bb]:= \{(1-u)\ba+u\bb:u\in[0,1]\}$, the (one-dimensional) segment contained in $\RR^2$ with extremes $\ba$ and $\bb$ and \emph{oriented}  $\ba\to\bb$; the segment $[\bb,\ba]$ occupies the same set of points as $[\ba,\bb]$ but has opposite orientation. Denote $\ba=(t_\ba,x_\ba)$ and $\bb=(t_\bb,x_\bb)$. The ``speed'' $v_{\ba\bb}$ of the segment is defined by $v_{\ba\bb}:= \frac{x_\bb-x_\ba}{t_\bb-t_\ba}$, if $t_\ba\neq t_\bb$ and by $v_{\ba\bb}= \pm \infty$, according to the sign of $x_\bb-x_\ba$, if $t_\ba=t_\bb$.

Denote $\bab$ the set of marked lines intersecting $[\ba,\bb]$:
\begin{gather}
  \label{ab}
  \bab:=\bigl\{(x,v,r): (t,x+vt)_{t\in \RR} \text{ intersects }[\ba,\bb]\bigr\};\\
\label{abpm}
\bab_- := \{ (x,v,r)\in\bab: v>v_{\ba\bb}\}, \quad  \bab_+ := \{ (x,v,r)\in\bab: v<v_{\ba\bb}\};
\end{gather}
Looking from $\ba$ to $\bb$, the set  $\bab_-$ contains the lines crossing the segment from right to left and $\bab_+$ contains those crossing from left to right.

Let $\euM$ be the set of measures $\mu$ on $\RR^3$ with the Borel sigma algebra $\euB(\RR^3)$, satisfying
\begin{align}
  \label{euM}
 \sup_{-\infty<a<b<\infty} \frac1{b-a} \int_{[a,b]\times \RR^2} \mu(dx,dv,d\err) \, (v^2+ \err^2+1)< \text{Constant}.
\end{align}
The second moment conditions will be necessary to construct infinite volume fields and hard rod evolutions. In particular, the space marginal of $\mu$ is sigma finite.

For $\mu\in\euM$, define the signed measure $\mu_1$ and the measure $\mu_2$ by
\begin{align}
  \label{mu1}
 \mu_1 (dx,dv,d\err) &:= \err\,\mu(dx,dv,d\err),\\
  \mu_2(dx,dv,d\err)&:= \err^2\,\mu(dx,dv,d\err). \label{mu2}
\end{align}
Define the field $H=H[\mu]$ as the function $H:\RR^2\to\RR$, given by
\begin{align}
  \label{hmu}
  H(\ba) := \mu_1 (\boa_+)-\mu_1 (\boa_-),
\end{align}
where $\bo$ is the origin of $\RR^2$.

\paragraph{Chentsov Lantuéjoul fields}

To each marked line $(x,v,\err)$ associate the function $\sfH_{(x,v,\err)}:\RR^2\to\RR$, defined by
\begin{align}
  \sfH_{(x,v,\err)}(\ba):=
  \begin{cases}
    0& \text{if } (x,v)\notin\boa\\
    \err&\text{if }(x,v)\in\boa,\,\text{ and }x>0,\\
    -\err&\text{if }(x,v)\in\boa,\,\text{ and }x<0.
  \end{cases}
\end{align}
Considering that the line $(x,v)$ splits $\RR^2$ into two semi planes, the function  $\sfH_{(x,v,\err)}$ gives height $0$ to the  points of the semi plane containing the origin, height $\err$ to the points of the other semi plane, if the line cross the $x$ axis at a positive value, and height $-\err$ if the line crosses the $x$ axis at a negative value. See Fig.~\ref{lcs11}.

\begin{figure}[h]
  \centering
    \includegraphics[width=.60\textwidth]{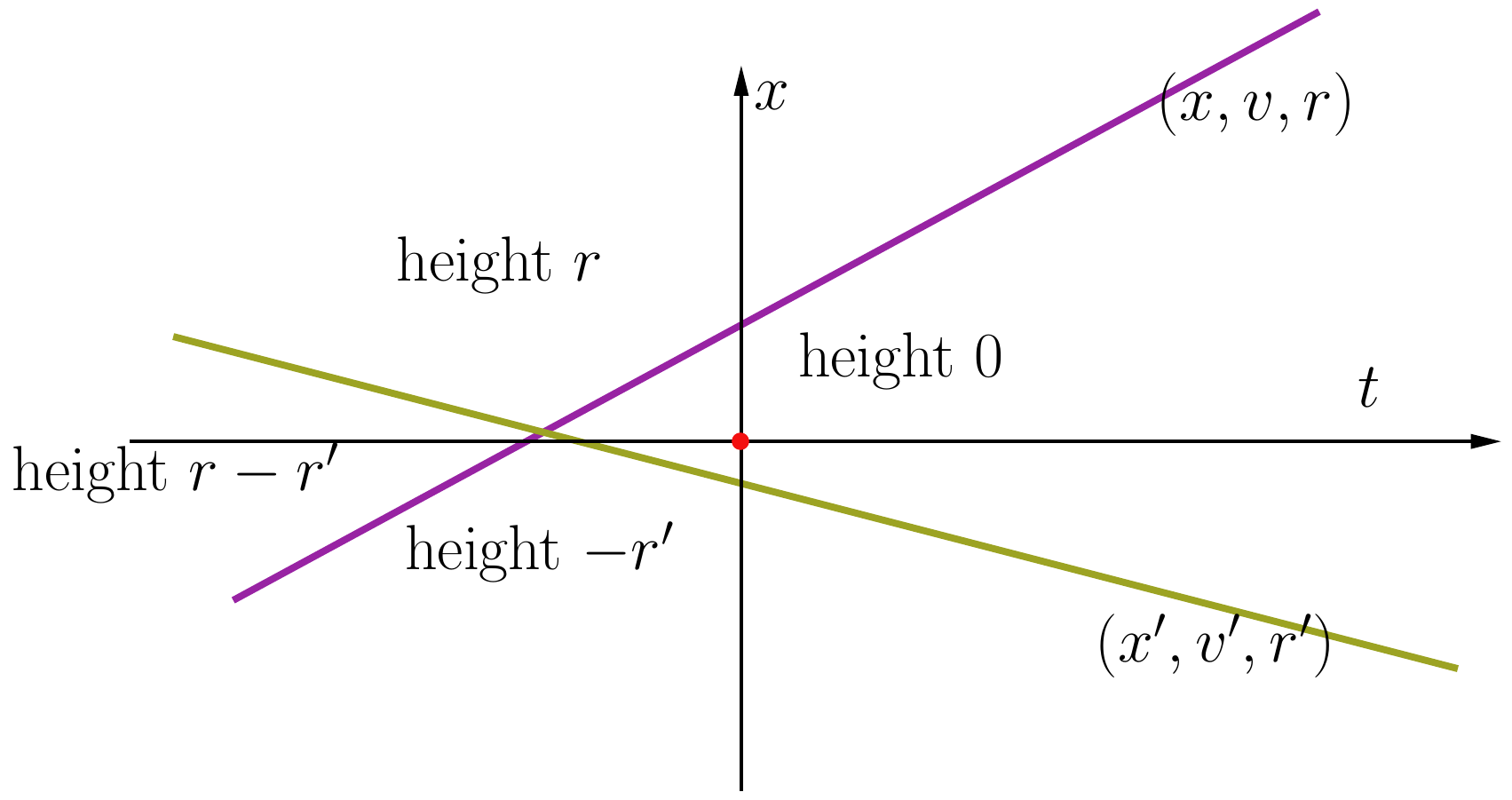}
    \caption{The marked lines $(x,v,\err)$ and $(x',v',\err')$ partition the time-space plane in 4 cones, each one at a constant height for the field $\sfH_{(x,v,\err)}+\sfH_{(x',v' ,\err')}$, as indicated.  \label{lcs11}} 
  \end{figure}
Given a configuration $\sfX\in\kX$, defined in \eqref{kX},  define the field $\sfH=\sfH[\sfX]$ by
\begin{align}
  \label{xi45}
  \sfH(\ba) := \sum_{(x,v,\err)\in\sfX} \sfH_{(x,v,\err)}(\ba), \quad \ba\in\RR^2.
\end{align}
The sum is finite, as it only collects the contribution of the lines intersecting the segment $[\bo,\ba]$, a property satisfied by $\sfX\in\kX$. In particular, $\sfH(\bo)=0$ and  for $\ba,\bb\in\RR^2$ we have
\begin{align}
  \label{xiab}
  \sfH(\bb)-  \sfH(\ba) &=   \sum_{(x,v,\err)\in\sfX}  r\,\Bigl( 1\{(x,v)\in\bab_-\}- 1\{(x,v)\in\bab_+\}\Bigr).
\end{align}
Denote $\ttK[\sfX]$ by
\begin{align}
  \label{ttK}
    \ttK(A) := \sum_{(x,v,\err)\in \sfX}\err\,\one\{(x,v,\err)\in A\},\qquad A\in\euB(\RR^3),
  \end{align}
  the length empirical measure associated to $\sfX$. We have
  \begin{align}
    \label{Hoa9}
    \sfH(\ba) = \ttK(\bob_+)-\ttK(\bob_-).
  \end{align}
Let $\mu\in\euM$ and let $\sfX$ be a Poisson process in $\RR^3$, with intensity measure $\mu$. $\sfX$ can be seen as a marked Poisson line process, \cite{kingman}. The process $\sfH[\sfX]$ is called \emph{Chentsov Lantuéjoul field} with control measure $\mu$. The name comes from the Chentsov construction of the L\'evy's \cite{MR0029120} Brownian process with several parameters, also called Levy Chentsov field, defined later, and from Lantuéjoul \cite{lantuejoul-book}, who introduced a random field built from a Poisson line process with random marks $\err$ in $\{-1,+1\}$.

\begin{figure}[h]
  \centering
  \includegraphics[width=.45\textwidth]{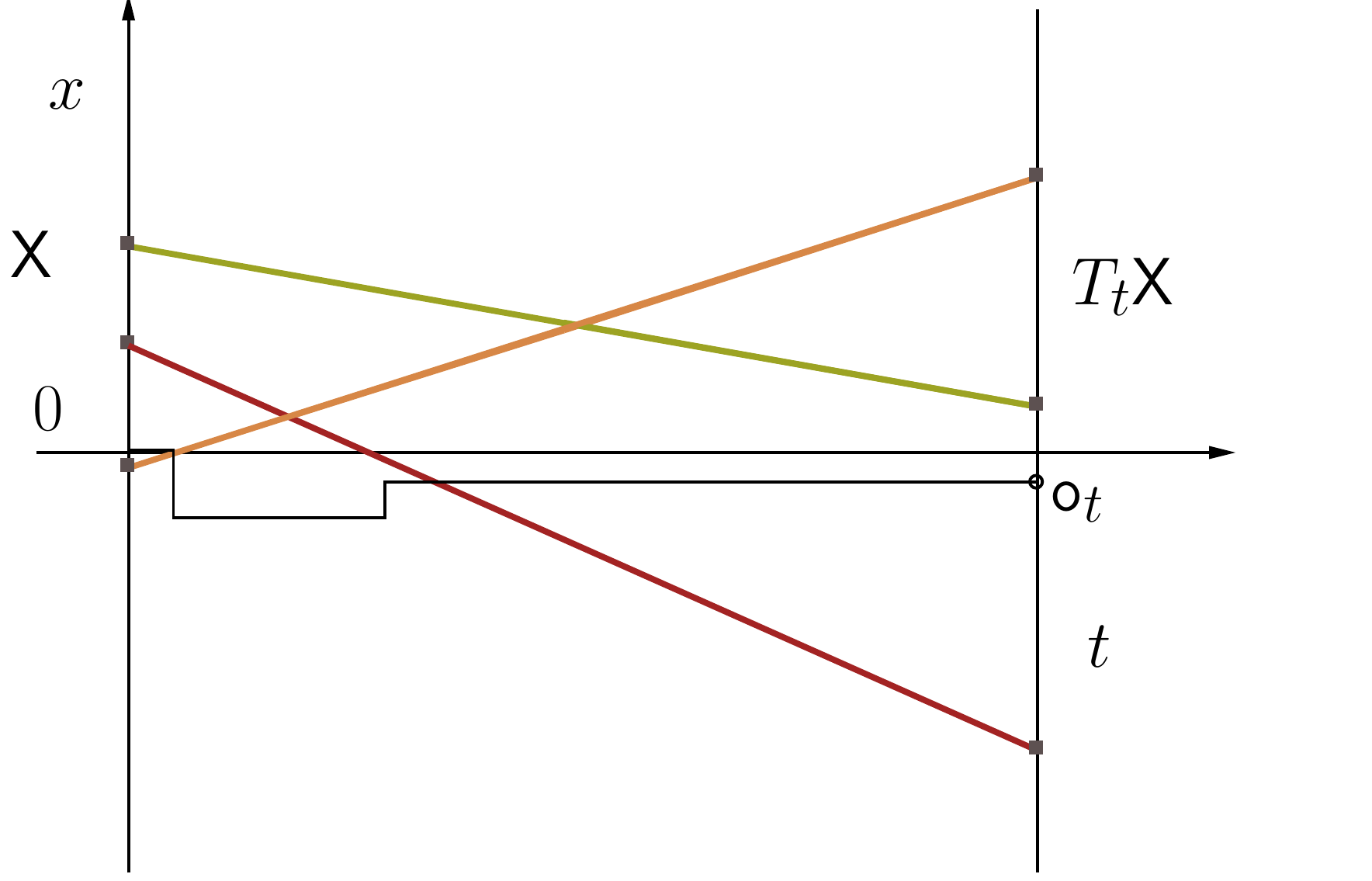}
 \includegraphics[width=.45\textwidth]{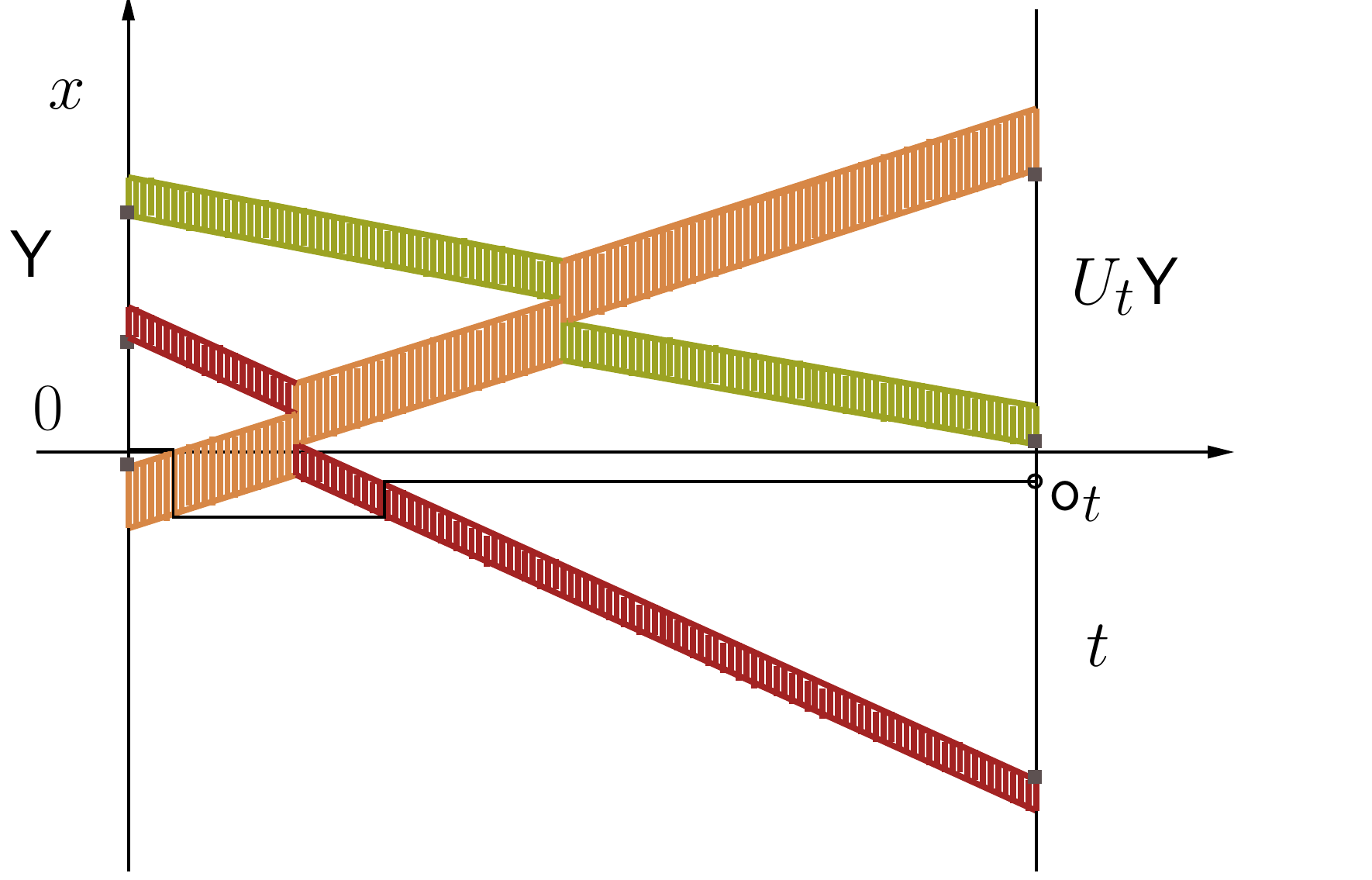}
  \caption{To the left, evolution of ideal particles and the position $\tto_t$. To the right, evolution of associated quasiparticles. Interpreting the colored trajectories as vertical steps, the right figure can be seen as a three dimensional perspective of the surface $\sfH$.}
  \label{yxvt3-yxvt2}
\end{figure}

From \eqref{xiab} and \eqref{hmu} we conclude that if $\sfX$ is a Poisson process with intensity $\mu$, $\sfH=\sfH[\sfX]$ and $H=H[\mu]$, then
 \begin{align}
   \EE \sfH(\ba) = H(\ba),\quad \ba\in\RR^2. \label{hach}
 \end{align}

 \paragraph{Law of large numbers}
Let $\mu\in\euM$ and $\mu_1 $ as in \eqref{mu1}. 
We construct a family of Poisson processes $(\sfX^\eps)_{\eps>0}$ as projections of a unique Poisson process $\bsfX$ in $\RR^3\times \RR$, with intensity measure $\mu(dx,dv,d\err)\, dz$, by defining
\begin{align}
  \label{xxe3}
  \sfX^\eps:=\{(x,v,\err):(x,v,\err,z)\in \bsfX,\, 0<z<\eps^{-1}\}.
\end{align}
For each $\eps>0$, $\sfX^\eps$ is a Poisson process on $\RR^3$ with intensity measure $\eps^{-1}\mu$. 
Let $\PP$ and $\EE$ be the probability and expectation associated to the process $\bsfX$. Denote
$\sfH^\eps:=\eps\sfH[\sfX^\eps]$ the rescaled Chentsov Lantuéjoul field given by
\begin{align}
  \sfH^\eps(\ba) &:= \eps\sum_{(x,v,\err)\in\sfX^\eps}\sfH_{(x,v,\err)}(\ba)\notag\\
  &= \ttK^\eps(\boa_+)-\ttK^\eps(\boa_-), \label{HK6}
\end{align}
by \eqref{Hoa9}, where $\ttK^\eps(A) = \eps\ttK(A)$.
\begin{proposition}
  \label{llnCLS}
Let $\mu\in\euM$ and $(\sfX^\eps)_{\eps>0}$ be a family of Poisson processes, as defined in  \eqref{xxe3}. Then,
\begin{align}
  \label{lln8}
    \lim_{\eps\to0} \sfH^\eps(\ba) = H(\ba),\qquad \PP\text{-a.s.},\quad \ba\in\RR^2,  \end{align}
  where $H=H[\mu]$ was defined in \eqref{hmu}.
\end{proposition}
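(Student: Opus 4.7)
My plan is to exploit the coupling \eqref{xxe3} to rewrite $\sfH^\eps(\ba)$ as the ratio of a compound sum of i.i.d.\ marks to the count of an auxiliary Poisson process indexed by the $z$-coordinate, so that the a.s.\ convergence reduces to two applications of the strong law of large numbers (SLLN).

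First I would verify, using \eqref{euM}, the finiteness of $\lambda:=\mu(\boa)$ and of $\mu_2(\boa)$. A line $(t,x+vt)_{t\in\RR}$ crosses $[\bo,\ba]$ iff $x=u(x_\ba-vt_\ba)$ for some $u\in[0,1]$, so the $x$-projection of $\boa$ is contained in $\{|x|\le|x_\ba|+|v||t_\ba|\}$. Splitting $v$ dyadically in $|v|\in(2^k,2^{k+1}]$ and exploiting the weight $v^2+\err^2+1$ in \eqref{euM} on each strip produces a convergent geometric series, giving both $\mu(\boa)<\infty$ and $\mu_2(\boa)<\infty$; by Cauchy--Schwarz this also gives $\int_\boa|\err|\,d\mu<\infty$.

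If $\lambda=0$ the identity is trivial, so assume $\lambda>0$ and enumerate the points of $\bsfX\cap(\boa\times(0,\infty))$ by increasing $z$-coordinate as $(X_i,V_i,R_i,Z_i)_{i\ge1}$. The Poisson marking theorem gives that $(Z_i)_{i\ge1}$ is a rate-$\lambda$ Poisson process on $\RR_+$, independent of the i.i.d.\ marks $(X_i,V_i,R_i)\sim\lambda^{-1}\mu|_\boa$. Setting
\[
Y_i:=R_i\bigl(\one\{(X_i,V_i,R_i)\in\boa_+\}-\one\{(X_i,V_i,R_i)\in\boa_-\}\bigr),\qquad N(t):=\#\{i:Z_i<t\},
\]
one has $\EE|Y_1|<\infty$, and \eqref{hmu} gives $\EE Y_1=H(\ba)/\lambda$. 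Combining \eqref{xxe3} with \eqref{HK6} produces the representation
\[
\sfH^\eps(\ba)\;=\;\eps\sum_{i:\,Z_i<\eps^{-1}}Y_i\;=\;\bigl(\eps\,N(\eps^{-1})\bigr)\cdot\frac{1}{N(\eps^{-1})}\sum_{i=1}^{N(\eps^{-1})}Y_i.
\]

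To conclude, as $\eps\to 0$ the Poisson SLLN yields $\eps N(\eps^{-1})\to\lambda$ $\PP$-a.s.; since $N(\eps^{-1})\to\infty$ a.s., Kolmogorov's SLLN applied to $(Y_i)$ gives $N(\eps^{-1})^{-1}\sum_{i\le N(\eps^{-1})}Y_i\to H(\ba)/\lambda$ $\PP$-a.s., and multiplying the two limits is \eqref{lln8}. The main obstacle is the integrability verification of the first step: \eqref{euM} only controls the $x$-marginal of $\mu$, whereas the $v$-range of $\boa$ is unbounded, so the dyadic decomposition played against the $v^2$-weight is what closes the estimate. Everything downstream is a direct composition of two strong laws.
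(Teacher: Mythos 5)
Your probabilistic skeleton is essentially the paper's: both proofs exploit the coupling \eqref{xxe3} to turn $\sfH^\eps(\ba)$ into an average of i.i.d.\ contributions indexed by the auxiliary $z$-coordinate and then invoke the strong law. The paper slices $\bsfX$ into unit $z$-blocks, writes $\sfH^\eps(\ba)=\eps\sum_{i\le\eps^{-1}}\bigl(\ttK_i(\boa_+)-\ttK_i(\boa_-)\bigr)$ as in \eqref{xie}--\eqref{llni}, and applies Kolmogorov's SLLN to the i.i.d.\ block sums; you instead order the points of $\bsfX$ restricted to $\boa$ by their $z$-coordinate and compose a Poisson-counting SLLN with an SLLN for the i.i.d.\ marks $Y_i$. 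This is a cosmetic difference; your version additionally needs $\mu(\boa)<\infty$ so that the marks form a genuine i.i.d.\ sequence, and your dyadic argument does establish that: on the strip $|v|\in(2^k,2^{k+1}]$ the $x$-projection of $\boa$ has length $O(2^k)$ while the restriction $v^2>2^{2k}$ buys a factor $2^{-2k}$ against the bound \eqref{euM}, giving a summable $O(2^{-k})$.

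The gap is in the claim that the same decomposition yields $\mu_2(\boa)<\infty$. Restricting to $|v|>2^k$ gains nothing when the integrand is $\err^2$ rather than $1$: the only available bound on the $k$-th strip is $\int_{I_k\times\RR^2}\err^2\,d\mu\le C|I_k|=O(2^k)$, and the resulting series diverges. Indeed $\mu_2(\boa)=\infty$ is possible for $\mu\in\euM$ (concentrate mass near $|v|\sim 2^k$, $|x|\sim 2^k$, $|\err|\sim 2^k$ with density of order $2^{-2k}$ in $x$), so the Cauchy--Schwarz step you use to conclude $\EE|Y_1|<\infty$ is not justified. What the SLLN actually requires is only the first-moment bound $\int_{\boa}|\err|\,d\mu<\infty$ (equivalently, finiteness of $\mu_1(\boa_\pm)$, without which $H(\ba)$ in \eqref{hmu} is not even well defined); you should estimate this quantity directly rather than route it through $\mu_2(\boa)$. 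Note that this is precisely the integrability the paper's own proof also relies on --- it is what makes $\EE\,\ttK_i(\boa_\pm)$ finite --- and the paper leaves it implicit; it is not delivered by the crude termwise bound $|\err|\le(v^2+\err^2+1)/(2\cdot 2^k)$ on the $k$-th strip either, since that gives only an $O(1)$ term per strip. Second moments of the marks are genuinely needed only later, for the central limit theorem of Proposition \ref{sef1}, not here.
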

\begin{proof}
For integer $\eps^{-1}$, $\sfX^\eps$ is the union of $\eps^{-1}$ processes
\begin{align}
  \label{xie}
  \sfX^\eps = \bigcup_{i=1}^{\eps^{-1}} \sfX_i,\qquad \sfX_i := \{(x,v,\err):(x,v,\err,z)\in \bsfX,\, i-1\le z<i\}.
\end{align}
By definition, $\sfX_i$ are iid Poisson processes of intensity measure $\mu$. Hence, denoting $\ttK_i= \ttK[\sfX_i]$, we have 
\begin{align}
  \label{llni}
 \sfH^\eps(\ba) &= \eps\sum_{i=1}^{\eps^{-1}} \bigl(\ttK_i(\boa_+)-\ttK_i(\boa_-)\bigr)\;\;\mathop{\longrightarrow}_{\eps\to0}\;\;  \mu_1 (\boa_+)-\mu_1 (\boa_-),\;\;\text{a.s.},
  \end{align}
  by the law of large numbers for iid random variables, because $\EE\bigl(\ttK_i(\boa_-)-\ttK_i(\boa_-)\bigr)=\mu_1 (\boa_+)-\mu_1 (\boa_-)=H(\ba)$.
\end{proof}

\paragraph{Line white noise and Levy Chentsov field}

Let $\mu\in\euM$, and  $\mu_2$ as in \eqref{mu2}.
Let \emph{marked line white noise} with \emph{control measure} $\mu_2$ be the random signed measure $\omega$ on $\euB(\RR^3)$ satisfying 
\begin{itemize}
\item[a)] for any Borel set $A$ with $\mu_2(A)<\infty$, the random variable $\omega(A)$ has centered Gaussian distribution with variance $\mu_2(A)$.

\item[b)] $\Cov (\omega(A),\omega(B))= \mu_2(A\cap B)$.
\end{itemize}

Following Chentsov \cite{chentsov1957levy}, we construct the \emph{Levy Chentsov field} $\eta=(\eta(\ba))_{\ba\in\RR^2}$ as a function of the white noise $\omega$. Recall that $\boa$ is the set of lines intersecting the segment $[\bo,\ba]$ and define 
\begin{align}
  \eta(\ba)&:= \omega\bigl(\boa\bigr),\qquad \ba\in\RR^2. \notag
\end{align}
By definition, the field $\eta$ is centered Gaussian with 
covariances
\begin{align*}
 \Cov\bigl(\eta(\ba),\eta(\bb)\bigr)
  &=\; \Cov\bigl( \omega(\boa), \omega(\bob)\bigr)=\;\mu_2(\boa\cap\bob)\\
  &\;=\;\tfrac12\bigl(\mu_2(\boa)+\mu_2(\bob)-\mu_2(\bab)\bigr).
  \end{align*}
To check the last identity use $\bab\, \dot\cup\, (\boa\cap\bob) = \boa\cup\bob$. We say that $\eta$ is a Levy-Chentsov field associated to the distance $\dd$ defined by $\dd(\ba,\bb):= \mu_2(\bab)$.

The height of the field $\eta$ along any given line contained in $\RR^2$ is Brownian motion:
\begin{lemma}[\rm Time changed Brownian motion]
Let $\mu\in\euM$ and $\eta$ be the Levy Chentsov field for the distance $\dd(\ba,\bb)=\mu_2(\bab)$. The processes
$ \bigl(\eta(t,x+vt)\bigr)_{t\in\RR}$ and $(\eta(t,x))_{x\in\RR}$
satisfy 
\begin{align}
  \bigl(\eta(t,x+vt)-\eta(0,x)\bigr)_{t\in\RR} 
  & \;\eqlaw \;
    \bigl(B(\mu_2(\overline{\ba_0\ba_t})\bigr)_{t\in\RR}\;\;,
    &\ba_s&:=(s,x+vs) 
    ;\notag\\
  \bigl(\eta(t,x)-\eta(t,0)\bigr)_{x\in\RR}
  & \;\eqlaw   \; \
    \bigl(B(\mu_2(\overline{\bb_0\bb_x})\bigr)_{x\in\RR}\;\;,
    &\bb_x&:=(t,x). \notag
\end{align}
Here $(B(\tau))_{\tau\in\RR}$ is standard one-dimensional Brownian motion.
\end{lemma}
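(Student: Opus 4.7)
The strategy is to identify each process on the left as a centered Gaussian process with independent increments whose variance matches the prescribed time change, and then invoke the standard identification with a deterministic time change of Brownian motion. I describe the oblique cut $\ba_s=(s,x+vs)$; the horizontal cut $\bb_x=(t,x)$ follows by verbatim repetition, using that the ideal-gas parametrization \eqref{x-lines} excludes lines parallel to the horizontal axis, so that every line in the process meets $\{t\}\times\RR$ at exactly one point. From the construction $\eta(\ba)=\omega(\boa)$, every increment $\eta(\ba_t)-\eta(\ba_s)$ is centered Gaussian, and the covariance identity derived just above the lemma gives
\[
 \Var\bigl(\eta(\ba_t)-\eta(\ba_s)\bigr)
 = \mu_2(\overline{\bo\ba_t}) - 2\mu_2(\overline{\bo\ba_t}\cap\overline{\bo\ba_s}) + \mu_2(\overline{\bo\ba_s})
 = \mu_2(\overline{\ba_s\ba_t}).
\]

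The main step is independence of increments over disjoint subintervals. For $s_0<s_1<\cdots<s_n$, I rewrite
\[
 \eta(\ba_{s_i})-\eta(\ba_{s_{i-1}})
 = \omega\bigl(\overline{\bo\ba_{s_i}}\setminus\overline{\bo\ba_{s_{i-1}}}\bigr)
 - \omega\bigl(\overline{\bo\ba_{s_{i-1}}}\setminus\overline{\bo\ba_{s_i}}\bigr),
\]
so both contributing sets are contained in $\overline{\ba_{s_{i-1}}\ba_{s_i}}$, the set of marked lines crossing $[\ba_{s_{i-1}},\ba_{s_i}]$. The geometric key is that any line distinct from the base line $\ell:=\{\ba_s:s\in\RR\}$ meets $\ell$ in at most one point, and therefore lies in at most one of the sets $\overline{\ba_{s_{i-1}}\ba_{s_i}}$ for different $i$; the line $\ell$ itself belongs to every $\overline{\bo\ba_{s_i}}$ and so does not appear in any of the symmetric differences above. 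Consequently the sets contributing to different increments are pairwise disjoint modulo the family of lines through a common boundary point $\ba_{s_i}$, which has $\mu_2$-measure zero for the intensities of interest. Property (b) of marked line white noise then yields mutual independence of the Gaussian increments.

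Combining these observations, $(\eta(\ba_t)-\eta(\ba_0))_{t\in\RR}$ is a centered Gaussian process with independent increments and variance function $t\mapsto\mu_2(\overline{\ba_0\ba_t})$, continuous and monotone on each of $[0,\infty)$ and $(-\infty,0]$. The standard characterization of Gaussian processes with independent increments by their variance function then gives the stated equality in law with $(B(\mu_2(\overline{\ba_0\ba_t})))_{t\in\RR}$, the two half-lines $t>0$ and $t<0$ of $\ell$ corresponding to the two independent sides of two-sided Brownian motion. The main obstacle is the pairwise-disjointness check in the independence step: one must verify that lines through any fixed boundary point $\ba_{s_i}$ carry zero $\mu_2$-mass. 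This is automatic when $\mu$ has a density on $\RR^3$, as for all intensities $f\in\euF$ appearing in the paper, but in the generality of $\mu\in\euM$ it would require a brief extra regularity remark.
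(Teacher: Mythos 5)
Your argument is correct and follows essentially the same route as the paper's proof, which simply asserts that by the definition of white noise both processes are Gaussian with independent increments and records the increment variance $\mu_2(\overline{\ba_s\ba_t})$; you supply the geometric disjointness argument (symmetric differences of the cones $\overline{\bo\ba_{s_i}}$ landing in $\overline{\ba_{s_{i-1}}\ba_{s_i}}$, and a line meeting the base line at most once) that the paper leaves implicit. Your closing caveat is also well taken: for a general $\mu\in\euM$ the set of lines through a fixed point need not be $\mu_2$-null, although it is for the absolutely continuous intensities $f\in\euF$ used everywhere else in the paper.
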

\begin{proof}
By the definition of white noise, both processes are Gaussian, have independent increments and their variances are given respectively by
\begin{align*}
  \Var \bigl(\eta(t,x+vt) - \eta(s,x+vs)\bigr)
  &= \mu_2(\overline{\ba_s\ba_t})
    = \Var B(\mu_2(\overline{\ba_s\ba_t})),\notag\\
  \Var\bigl(\eta(t,x)-\eta(t,z)\bigr)
  &= \mu_2(\overline{\bb_x\bb_z})
    = \Var B(\mu_2(\overline{\bb_x\bb_z})). \tag*{\qedhere}
\end{align*}
\end{proof}

\paragraph{Field fluctuations}
Under diffusive rescaling, a Chentsov Lantuéjoul field converges to a Levy Chentsov field. 
Define $\eta^\eps=\eta^\eps[\sfX^\eps]$, by
\begin{align}
  \label{et5}
  \eta^\eps(\bb) := \frac{\sfH^\eps(\bb)- \EE \sfH^\eps(\bb)}{\sqrt\eps}, \quad \bb\in\RR^2.
\end{align}
where $\sfH^\eps$ was defined after \eqref{HK6}. 
\begin{proposition}[\rm Convergence to Levy Chentsov field]
  \label{sef1}
  Let $\mu$ and $\sfX^\eps$ as in Proposition \ref{llnCLS}. The finite dimensional distributions of the field $\eta^\eps$ converge to those of the field $\eta$: for any finite set $N\subset \RR^2$  
  \begin{align}
    \lim_{\eps\to0}  (\eta^\eps(\ba))_{\ba\in N} \eqlaw  (\eta(\ba))_{\ba\in N},
  \end{align}
  where $\eta$ is the Levy Chentsov field associated with the distance $\dd(\ba,\bb)= \mu_2(\bab)$.
\end{proposition}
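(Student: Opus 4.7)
The plan is to show convergence of finite-dimensional distributions by the Cram\'er--Wold device, computing the joint characteristic function of $(\eta^\eps(\ba))_{\ba\in N}$ explicitly via the L\'evy--Khintchine formula for compensated Poisson integrals and passing to the limit $\eps\to0$. For $\ba_1,\dots,\ba_n\in\RR^2$ and $\lambda_1,\dots,\lambda_n\in\RR$, I would rewrite the linear combination as the compensated Poisson integral
\begin{align*}
S^\eps \,:=\, \sum_{k=1}^n\lambda_k\,\eta^\eps(\ba_k) \,=\, \sqrt\eps\int G(x,v,\err)\,d(\sfX^\eps-\eps^{-1}\mu)(x,v,\err),
\end{align*}
where $G(x,v,\err):=\sum_{k=1}^n\lambda_k\,\sfH_{(x,v,\err)}(\ba_k)$, using $\EE\sfH^\eps(\ba)=\int\sfH_{(x,v,\err)}(\ba)\,d\mu=H(\ba)$ by Campbell's theorem together with \eqref{xi45} and \eqref{hmu}.

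The characteristic function of a compensated Poisson integral with intensity $\eps^{-1}\mu$ then gives
\begin{align*}
\EE\,e^{i S^\eps} \,=\, \exp\!\Bigl(\int \eps^{-1}\bigl(e^{i\sqrt\eps\,G}-1-i\sqrt\eps\,G\bigr)\,d\mu\Bigr).
\end{align*}
The integrand converges pointwise to $-\tfrac12 G^2$ as $\eps\to0$ and is dominated by $\tfrac12 G^2$ via the elementary bound $|e^{iy}-1-iy|\le\tfrac12 y^2$. Provided $\int G^2\,d\mu<\infty$, dominated convergence yields $\EE e^{iS^\eps}\to\exp(-\tfrac12\int G^2\,d\mu)$, the characteristic function at $1$ of a centered Gaussian of variance $\int G^2\,d\mu$. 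The crucial geometric identity
\begin{align*}
\int\sfH_{(x,v,\err)}(\ba)\,\sfH_{(x,v,\err)}(\bb)\,d\mu\,=\,\mu_2(\boa\cap\bob)
\end{align*}
is immediate from the definition of $\sfH_{(x,v,\err)}$: when the line $(x,v)$ lies in $\boa\cap\bob$, both factors share the common value $\err\,\sign(x)$, so their product is $\err^2$; otherwise at least one factor vanishes. Expanding $G^2$ and integrating therefore gives $\int G^2\,d\mu=\sum_{k,l}\lambda_k\lambda_l\,\mu_2(\overline{\bo\ba_k}\cap\overline{\bo\ba_l})$, which coincides with the Levy--Chentsov quadratic form $\sum_{k,l}\lambda_k\lambda_l\,\Cov(\eta(\ba_k),\eta(\ba_l))$. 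Cram\'er--Wold then promotes this one-dimensional convergence to joint weak convergence.

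The main obstacle I expect is verifying $\mu_2(\boa)<\infty$ for each $\ba\in\RR^2$, which is needed both to justify the dominated convergence step and to ensure the limiting covariance is finite. For $\ba=(t,x_\ba)$, the set $\boa$ consists of marked lines whose $x$-coordinate lies in an interval of length $|x_\ba-vt|\le|x_\ba|+|v||t|$. Splitting the $v$-integration into $|v|\le V$ and $|v|>V$, the first region is contained in a bounded $x$-strip and is controlled directly by the $\err^2$ part of the local second-moment bound \eqref{euM}; the second uses the $v^2$ weight in \eqref{euM} together with the inclusion $|x|\le|x_\ba|+|v||t|$, so that an integration in $x$ over dyadic annuli is absorbed by the $v^2$-moment density. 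Granting this integrability, the rest of the argument is the standard Poisson CLT executed at the level of characteristic functions.
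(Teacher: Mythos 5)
Your argument is correct in its essentials but follows a genuinely different route from the paper. The paper reduces the statement to the classical multivariate CLT: writing $\sfH^\eps(\ba)=\ttK^\eps(\boa_+)-\ttK^\eps(\boa_-)$ and decomposing $\sfX^\eps=\bigcup_{i=1}^{\eps^{-1}}\sfX_i$ into iid unit-intensity Poisson processes via the coupling \eqref{xxe3}, it views $\ttK^\eps(A)$ as a normalized sum of the iid variables $\ttK(A)[\sfX_i]$ with mean $\mu_1(A)$ and covariances $\mu_2(A\cap B)$ (Campbell's theorem), and invokes the central limit theorem. You instead compute the joint characteristic function directly from the exponential formula for compensated Poisson integrals and pass to the limit by dominated convergence, identifying the limiting quadratic form through the pointwise identity $\sfH_{(x,v,\err)}(\ba)\,\sfH_{(x,v,\err)}(\bb)=\err^2\,\one\{(x,v)\in\boa\cap\bob\}$, which is the correct geometric heart of the matter. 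Your route is self-contained, works for arbitrary $\eps\to0$ rather than along $\eps^{-1}\in\NN$, and makes the covariance structure transparent; the paper's route is shorter because the iid decomposition outsources the analysis to the classical theorem. Both arguments require exactly the same integrability input, namely $\mu_2(\boa)<\infty$ for each $\ba$.

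On that last point, the verification you sketch does not close as written. On the region $|v|>V$ the constraint $(x,v)\in\boa$ forces $|x|\le|x_\ba-vt_\ba|$, hence $|v|\gtrsim |x|/|t_\ba|$ for large $|x|$, and your plan is to let the $v^2$ weight in \eqref{euM} absorb the growing $x$-range. But \eqref{euM} controls $\int(v^2+\err^2+1)\,d\mu$ per unit $x$-length, i.e.\ the \emph{separate} second moments; converting the lower bound on $|v|$ into decay of $\int \err^2\,\one\{|v|\text{ large}\}\,d\mu$ over a dyadic $x$-annulus would require a joint moment of the type $\int v^2\err^2\,d\mu\le C(b-a)$, which is not assumed. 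Indeed one can build $\mu\in\euM$ with $|\err|\sim|v|\sim|x|$ on its support for which $\mu_2(\boa)=\infty$. This is not a defect of your main argument --- the paper leaves the same integrability implicit, since its CLT step also silently assumes the finite variance $\mu_2(\boa_\pm)<\infty$ that is in any case needed for the limit field $\eta$ to be defined --- but you should either assume $\mu_2(\boa)<\infty$ explicitly or note that it does not follow from \eqref{euM} alone.
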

\begin{proof}
Since $\sfH^\eps(\ba)=\ttK^\eps(\boa_+)-\ttK^\eps(\boa_-)$ and the sets $\boa_-$ and $\boa_+$ are disjoint, it suffices to show that 
  \begin{align}
    \label{clt3}
    \frac1{\sqrt\eps}\bigl( \ttK^\eps(A) - \mu_1(A)\Bigr) \eqlaw \omega(A),    \qquad A\in\euB(\RR^3),
  \end{align}
where $\omega$ is white noise in $\euB(\RR^3)$ with control measure $\mu_2$, in particular $\omega(A)$ is centered Gaussian with variance $\mu_2(A)$. To see it,   
recall the decomposition \eqref{xie} and for integer $\eps^{-1}$ write $\ttK^\eps(A)[\sfX^\eps]=\sum_{i=1}^{\eps^{-1}} \ttK(A)[\sfX_i]$, so that $\ttK^\eps(A)$ is a sum of iid random variables with mean $\mu_1(A)$, divided by the number of terms. The convergence \eqref{clt3} follows from the central limit theorem and, by Campbell's theorem, the covariances are
\begin{align}
 \Cov (\ttK^\eps(A),\ttK^\eps(B)) &= \mu_2(A\cap B),\quad A,B\in \euB(\RR^3),\;\; \eps>0. \notag\tag*{\qedhere}
\end{align}
\end{proof}

\section{Hard rod limit theorems}
\label{S4}

When the rod lengths are all positive, the hard rod dynamics can be defined as a group of operators $(U_t)_{t\in\RR}$,
where $U_t\sfY$ is the hard rod configuration at time $t$. When negative lengths are allowed, $U_t\sfY$ can still be defined as a family of configurations, but $U_t$ cannot be seen as an operator because $U_t\sfY$ is not a function of $\sfY$.  However, one can still obtain limit theorems when the intensity measure $\mu$ satisfies some conditions. 

The ideal gas evolution operator $T_t$ is defined by
\begin{align}
T_t \sfX  &\coloneqq \{(x+vt,v,\err)\in \RR^3:(x,v,\err)\in\sfX\}.
\end{align}
$T_t$ is a group: $T_0=$ Identity and $T_{t+s}=T_tT_s$.
The set of configurations $\sfX $ with finite absolute mass flows is defined by
\begin{align}
  \label{kX}
  \kX &:= \Bigl\{\sfX\subset \RR^3 : \sum_{(z,w,\err)\in\sfX}(|\err|+1) \bigl(\one\{z<x,z+wt\ge x+vt\}\\
  &\qquad\qquad\qquad\qquad+\one\{z\ge x,z+wt<x+vt\}\bigr)<  \infty ;\;  x,v,t \in\RR \Bigr\}.\notag
\end{align}
In particular, if $\sfX\in\kX$, then $\sfX$ is locally finite and $|m_a^b(\sfX)|<\infty$, $a<b\in\RR$.

\paragraph{Positive lengths}
We consider some properties of the group $(U_t)_{t\in\RR}$ in the positive case. Denote the set of nonnegative length configurations by
\begin{align}
  \label{>0}
\kXa:=\{\sfX\in\kX: \err\ge0 \text{ for all }(x,v,\err)\in\sfX\},
\end{align}
the corresponding hard rod configuration set by
\begin{align}
  \notag
  \kYa:=\{\sfY\in \kXa: (y,y+\err)\cap (\ty,\ty+\tr )=\emptyset,\; (y,v,\err), (\ty,\tv ,\tr )\in \sfY\},
\end{align}
and the set of configuration with no rod containing $a$ is denoted by
\begin{align}
  \label{kYa}
  \kY_a:=\{\sfY\in \kYa: a\notin (y,y+\err),\; (y,v,\err)\in \sfY\}.
\end{align}
Notice that $\kY_a$ include configurations having a hard rod (with left extreme) at $a$. 
The dilation with respect to the point $a$, defined in \eqref{D0X}, now are labeled with the configuration and can be expressed by
\begin{align}
  \label{D0X7}
  D_{\sfX,a}(x)&= x+m_a^b(\sfX),\\
  D_a\sfX&= \bigl\{(D_{\sfX,a}(x),v,\err):(x,v,\err)\in\sfX\bigr\},
\end{align}
where 
the \emph{signed mass} of $\sfX $ between real points $a$ and $b$ is 
\begin{align}
 m_a^b(\sfX ) :=
  \begin{cases}
    \sum_{(x,v,\err)\in\sfX}\; \err\,\one\{a\le x<b\}&\text{if } a<b\\
    - \sum_{(x,v,\err)\in\sfX}\; \err\,\one\{b\le x<a\}&\text{if } a>b.
  \end{cases}
\end{align}
Under \eqref{>0}  
the map $D_a:\kX\to\kY_a$ is a bijection and its inverse is the \emph{contraction from $a$} map 
$C_a:\kY_a\to\kX$, given by
\begin{align}
   C_{\sfY,a}(y) &:= y - m_a^y(\sfY),\\
    C_a\sfY &:= \bigl\{\bigl(C_{\sfY,a}(y),v,\err\bigr):(y,v,\err) \in \sfY\bigr\}.\label{CaY}
\end{align}
If $\sfY\in \kY\setminus \kYa$, then the map $D_{\sfY,a}$ is not necessarily one-to-one, so the inverse is not well defined. 

Let $\sfY \in\kY_y$ 
and denote $\ttu_{v,t}(y)[\sfY]$ the position at time $t$ of a quasiparticle with arbitrary finite length inserted at time zero in $y$, with speed $v$, defined by
\begin{align}
  \label{test1}
  \ttu_{v,t}(y)&:= y+vt + \ttj(y,v,t)[C_y\sfY].\qquad\sfY \in\kY_y,
\end{align}
where \emph{net mass flow} $\ttj(x,v,t)[\sfX]$ was defined in \eqref{jH8}. 
\begin{figure}
  \centering
  \includegraphics[width=.60\textwidth]{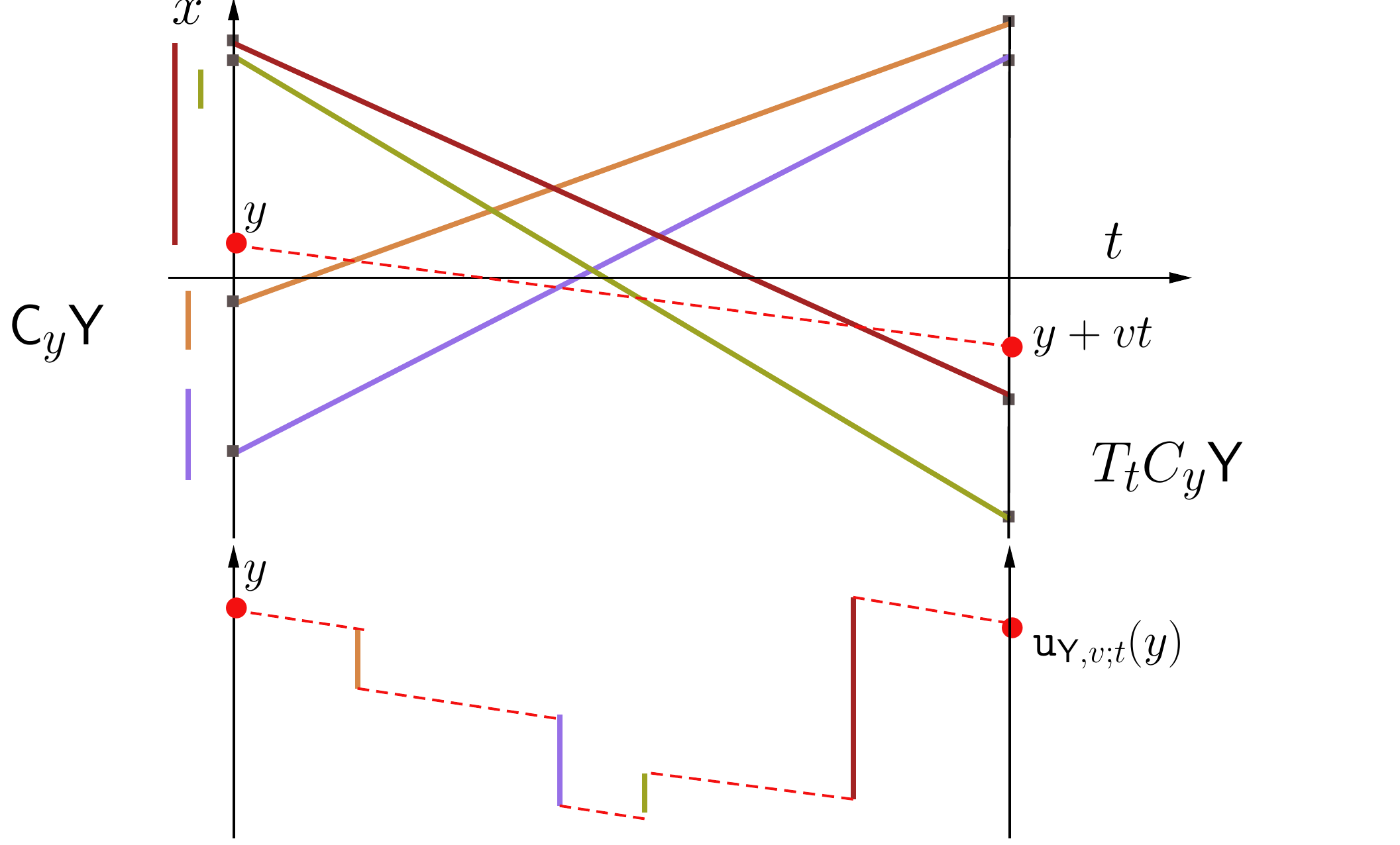}
  \caption{The upper picture shows an ideal gas evolution and the ideal trajectory of a particle $(y,v,0)$. To the left we see the length associated with each trajectory. The lower picture shows the trajectory $\ttu_{v,t}(y)[\sfY]$ associated with this configuration, and the quasiparticle trajectories.}
  \label{tagged-rod}
\end{figure}
The quasiparticle travels deterministically at speed $v$ between collisions and jumps by $\err$ when it is crossed by a slower size $\err$ quasiparticle, and by $-\err$, if the crossing quasiparticle is faster. The flow $\ttj_{C_y\sfY}$ is finite for the ideal gas configuration $C_y\sfY\in\kX$, see \eqref{kX}. The hard rod configuration at time $t$ is defined by
\begin{align}
  \label{uty}
  U_t\sfY :=  \bigl\{(\ttu_{v,t}(y),v,\err): (y,v,\err)\in\sfY\bigr\}.
\end{align}
In this definition we used that $\sfY\in\kY_y$ for all $(y,v,\err)\in\sfY$. We leave to the reader the proof of the following lemma.
\begin{lemma}[\rm Group of operators]
  The family $(U_t)_{t\in\RR}$ operating on $\kY_{\ge0}$ is a group: $ U_0\sfY= \sfY$ and $U_{t+s}\sfY= U_tU_s\sfY$ for $\sfY\in\kY_{\ge0}$.
\end{lemma}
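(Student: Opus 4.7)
The first identity $U_0\sfY=\sfY$ is immediate from the definition: for each $(y,v,\err)\in\sfY$, one has $\ttj(y,v,0)[C_y\sfY]=\sfH(0,y)-\sfH(0,y)=0$ by \eqref{jH8}, hence $\ttu_{v,0}(y)=y$.

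For the composition $U_{t+s}\sfY=U_tU_s\sfY$, the plan is to translate everything to the ideal gas side, where the group property $T_{t+s}=T_tT_s$ is immediate, and then absorb the dilation in a controlled way. I would set $\sfX:=C_0\sfY\in\kXa$, so that $\sfY=D_0\sfX$, and use \eqref{yvt3} to parameterize the quasiparticles of $U_s\sfY$ as $(\tty_{v,s}(x),v,\err)$ with $\tty_{v,s}(x)=x+vs+\sfH(s,x+vs)$ for $(x,v,\err)\in\sfX$. Fixing $(x_0,v_0,\err_0)\in\sfX$, setting $a':=x_0+v_0 s$ and $y_s:=\tty_{v_0,s}(x_0)=a'+\sfH(s,a')$, the position of the corresponding quasiparticle under $U_t(U_s\sfY)$ is, by the definition of $U_t$ applied to $U_s\sfY$,
\[
\ttu_{v_0,t}(y_s)[U_s\sfY]=y_s+v_0 t+\ttj(y_s,v_0,t)[C_{y_s}(U_s\sfY)].
\]

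The central step is the identification $C_{y_s}(U_s\sfY)=T_s\sfX+(y_s-a')$, where the translation acts on the spatial coordinate only. This is the point where nonnegativity of rod lengths is essential: it ensures that the dilation $D_{T_s\sfX,a'}$ is a monotone bijection, so that the signed mass counts $m_{y_s}^\cdot(U_s\sfY)$ and $m_{a'}^\cdot(T_s\sfX)$ transfer cleanly under the natural bijection $(x,v,\err)\leftrightarrow(\tty_{v,s}(x),v,\err)$. Given the identification, together with the fact that $\ttj$ depends only on net signed crossings of the trajectory and is therefore invariant under simultaneous translation of configuration and base point, one obtains $\ttj(y_s,v_0,t)[C_{y_s}(U_s\sfY)]=\ttj(a',v_0,t)[T_s\sfX]$.

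The last ingredient is the additivity of net crossings along the trajectory $r\mapsto(r,x_0+v_0 r)$: the signed count of $T_s\sfX$-lines crossing the portion of the trajectory over times $[0,t]$ equals the signed count of $\sfX$-lines crossing the portion over $[s,s+t]$, which gives $\ttj(a',v_0,t)[T_s\sfX]=\sfH(s+t,x_0+v_0(s+t))-\sfH(s,a')$. Substituting back yields
\[
\ttu_{v_0,t}(y_s)[U_s\sfY]=x_0+v_0(s+t)+\sfH(s+t,x_0+v_0(s+t))=\tty_{v_0,s+t}(x_0),
\]
which matches the position of the same quasiparticle in $U_{s+t}\sfY$; ranging over every $(x,v,\err)\in\sfX$ gives the group law. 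The main obstacle is justifying the dilation identity $C_{y_s}(U_s\sfY)=T_s\sfX+(y_s-a')$: it requires carefully tracking the correspondence between signed mass counts under the bijection, and uses nonnegativity of rod lengths essentially to guarantee that $D_{T_s\sfX,a'}$ is order-preserving so the inversion is unambiguous.
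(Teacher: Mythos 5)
Your argument is correct. Note that the paper explicitly leaves the proof of this lemma to the reader, so there is no in-paper proof to compare against line by line; the closest material is the sketch of the evolution formula \eqref{hh12}, $U_t\sfY=S_{\tto_t}D_0T_tC_0\sfY$, and the proof of its macroscopic counterpart, Lemma \ref{grouproperty}, both of which follow exactly your strategy: conjugate the hard rod dynamics to the free dynamics via dilation/contraction, use $T_{t+s}=T_tT_s$, and account for the shift produced by the mass crossing the reference trajectory. Your central identity $C_{y_s}(U_s\sfY)=S_{y_s-a'}T_s\sfX$ is equivalent to \eqref{hh12} recentred at the base point $a'=x_0+v_0s$ (it follows from $m_{a'}^{x+vs}(T_s\sfX)=\sfH(s,x+vs)-\sfH(s,a')$ together with $C_{a+c}S_c=S_cC_a$), and the remaining step $\ttj(a',v_0,t)[T_s\sfX]=\sfH(s+t,x_0+v_0(s+t))-\sfH(s,a')$ is the additivity of the field increments along the line $(r,x_0+v_0r)_{r\in\RR}$, i.e.\ \eqref{xiab} applied to consecutive segments. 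The one point worth stating explicitly is that $U_s\sfY\in\kY_{\ge0}$ with $y_s\notin(y,y+\err)$ for every other rod, so that $\ttu_{v_0,t}(y_s)[U_s\sfY]$ is defined by \eqref{test1}; this is where nonnegativity of the lengths enters, exactly as you indicate, since $D_{T_s\sfX,a'}$ is then nondecreasing and the dilated rods are pairwise disjoint.
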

It is convenient to express $U_t$ in terms of the ideal gas dynamics.
Let  $S_a$ be the shift operator defined by
\begin{align}
  \label{shift}
  S_a\sfX  := \{(y+a,v,\err):(y,v,\err)\in \sfX \}.
\end{align}
For $\sfY\in\kY_0$, denote the position of a zero length zero speed quasiparticle starting at the origin, by
\begin{align}
\tto_t[\sfY] &:= \ttu_{0,t}(0) = \ttj(0,0,t)[{C_0\sfY}]
               ,\qquad  \sfY\in\kY_0.  \label{hh13}
\end{align}

\begin{lemma}
Assume $\sfY\in\kY_0$. Then,
  \begin{align}
  U_t \sfY & = S_{\tto_t} D_0T_t C_0 \sfY ,\qquad \sfY \in \kY_0. \label{hh12}
\end{align}
\end{lemma}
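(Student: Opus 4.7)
The plan is to verify the set identity one quasiparticle at a time, using the ideal-gas field $\sfH$ as a common reference. Fix $(y,v,\err)\in\sfY$ and set $x:=C_{\sfY,0}(y)=y-m_0^y(\sfY)$, so that $(x,v,\err)\in\sfX:=C_0\sfY$ and $y=x+m_0^x(\sfX)$. Both sides of \eqref{hh12} are sets indexed by $(y,v,\err)\in\sfY$ whose elements share the same velocity and rod length, so it suffices to match positions.

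\textbf{LHS.} From \eqref{test1}, $\ttu_{v,t}(y)=y+vt+\ttj(y,v,t)[C_y\sfY]$. The two contraction bases produce configurations that differ by a rigid spatial translation, since $C_{\sfY,y}(\tilde y)-C_{\sfY,0}(\tilde y)=m_0^y(\sfY)=y-x$, so $C_y\sfY=S_{y-x}C_0\sfY$. Interpreting $\ttj(y,v,t)[\cdot]$ as the signed count of ideal-gas trajectories crossing the spacetime segment $(0,y)\to(t,y+vt)$---a quantity unchanged by any common shift of trajectories and segment---one obtains $\ttj(y,v,t)[C_y\sfY]=\ttj(x,v,t)[\sfX]=\sfH(t,x+vt)[\sfX]-\sfH(0,x)[\sfX]$ by \eqref{jH8}. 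Since $\sfH(0,x)[\sfX]=m_0^x(\sfX)=y-x$, the LHS collapses to $\ttu_{v,t}(y)=x+vt+\sfH(t,x+vt)[\sfX]$.

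\textbf{RHS.} Tracking $(y,v,\err)$ through the operators: $C_0$ sends it to $(x,v,\err)$, $T_t$ to $(x+vt,v,\err)$, the dilation $D_0$ relative to $T_t\sfX$ produces position $x+vt+\sfH(0,x+vt)[T_t\sfX]$, and finally $S_{\tto_t}$ adds $\tto_t$. The sole nontrivial step is the identity
\[
\sfH(t,x+vt)[\sfX]-\sfH(0,x+vt)[T_t\sfX]=\sfH(t,0)[\sfX]=\tto_t.
\]
I would prove it by direct expansion of the indicators in \eqref{H8}: after regrouping the four indicator terms, the difference telescopes to $\sum_{(z,w,\err)\in\sfX}\err(\one\{z\ge0,z+wt<0\}-\one\{z<0,z+wt\ge0\})$, which is precisely $\sfH(t,0)[\sfX]=\tto_t$ by \eqref{hh13}. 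Geometrically this is the additivity of signed line crossings along the bent spacetime path $(0,0)\to(t,0)\to(t,x+vt)$ versus the direct segment $(0,0)\to(t,x+vt)$---a closed spacetime loop accumulates zero signed crossings.

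Combining the two computations, the element of $S_{\tto_t}D_0T_tC_0\sfY$ labelled by $(y,v,\err)\in\sfY$ is $(x+vt+\sfH(t,x+vt)[\sfX],v,\err)=(\ttu_{v,t}(y),v,\err)$, the matching element of $U_t\sfY$. The nonnegativity hypothesis \eqref{>0} makes $C_0:\kY_0\to\kXa$ a bijection, so the pointwise match upgrades to the set equality $U_t\sfY=S_{\tto_t}D_0T_tC_0\sfY$. The main obstacle is the $\sfH$-identity displayed above, which reflects the fact that $\sfH$ is defined relative to a fixed reference origin $(0,0)$ and is not translation invariant; once it is in hand, the rest is bookkeeping from the definitions in \eqref{D0X7}--\eqref{CaY} and \eqref{jH8}.
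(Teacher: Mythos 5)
Your proof is correct, and it is in fact more complete than what the paper provides: the paper only sketches the argument (deferring to \cite{bds}), reasoning case-by-case that $S_{\tto_t}$ compensates the shift of $D_0T_t\sfX$ caused by each particle crossing the origin during $[0,t]$. You instead verify the identity element by element through the field $\sfH$, and the one genuinely new ingredient you supply is the exact relation
\begin{align*}
\sfH(t,b)[\sfX]-\sfH(0,b)[T_t\sfX]=\sfH(t,0)[\sfX]=\tto_t \qquad\text{for all } b,
\end{align*}
which I checked does follow from expanding the indicators in \eqref{H8} exactly as you describe (writing $u=z+wt$, the four terms regroup to $\one\{z\ge0,u<0\}-\one\{z<0,u\ge0\}$ independently of $b$). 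This turns the paper's heuristic ``crossings of the origin are compensated'' into a single uniform algebraic identity, valid whether or not any rod crosses the origin, and it also handles signed lengths without extra case analysis. The remaining steps --- $C_y\sfY=S_{y-x}C_0\sfY$, the translation invariance of $\ttj$ as a signed crossing count (which is justified by \eqref{xiab}), and $\sfH(0,x)[\sfX]=m_0^x(\sfX)=y-x$ --- are all sound bookkeeping, and both sides reduce to the common expression $x+vt+\sfH(t,x+vt)[\sfX]$ of \eqref{josefa9}. So your route is a legitimate, self-contained alternative to the paper's citation of \cite{bds}; what it buys is an actual proof rather than a sketch, at the cost of hiding the dynamical picture (quasiparticles jumping at collisions) behind indicator algebra.
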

\begin{proof}[Sketch proof] This lemma is proved in \cite{bds}; we give an idea of the proof. 
If there is no rod crossing the origin in the interval $[0,t]$, $\tto_t=0$ and for each $(x,v,\err)\in\sfX$, the mass of the configuration $T_t\sfX$ between the origin and $x+vt$ is the mass of $\sfX$ in $(0,x)$ plus the net flow $\ttj(x,v,t)$, implying \eqref{hh12} holds. On the other hand, when a particle $(\tx,\tv,\tr)\in\sfX$ cross the origin in that time interval, it shifts the configuration $D_0T_t\sfX$ by $\tr\sign(\tv)$, while $U_t\sfY$ is not shifted by these crossings. The operator $S_{\tto_t}$ in \eqref{hh12} compensates these effects.
\end{proof}
The condition $\sfY\in\kY_0$ could be dropped at the price of a more involved definition of $\tto_t$. We will take $\sfY= D_0\sfX$ which automatically belongs to $\kY_0$, avoiding that problem.

\paragraph{Remark on negative lengths} When $\sfX\in \kX\setminus\kX_{\ge0}$ we can still define $D_a\sfX$ by using \eqref{D0X}. Give an ordered label to the particles in $\sfX$, as we did in the Introduction: let $\sfX=\{(x_i,v_i,\err_i): i\in\ZZ\}$, such that $x_i<x_{i+1}$ for all $i$. The dilated configuration $\sfY=D_0\sfX =\{(y_i,v_i,\err_i):i\in\ZZ\}$, where $y_i=D_0(x_i)$ satisfies the condition
\begin{align}
  y_i+\err_i\le y_{i+1},\qquad i\in\ZZ,
\end{align}
even when $\err_i<0$.
 Assuming $y_0+\err_0\le 0<y_1$, define the compression map $C_0[\sfY]$ by
\begin{align}
  C_a(y_i) = y_i - \sum_{(y_j,v,\err)\in\sfY} \err_j\,\one\{0<j<i\}
\end{align}

\paragraph{Hard rods and Chentsov Lantuéjoul fields}
For $\sfX\in\kX$, the mass, dilation and flow in terms of field differences are: 
\begin{align}
 m_a^b(\sfX) &= \sfH(0,b)-\sfH(0,a),\quad a,b\in\RR,\label{mass1}\\[2mm]
   D_{\sfX,a}(x) &= x+\sfH(0,x)-\sfH(0,a),\label{dilation1} \\[2mm]
   \ttj_\sfX(x,v,t) &= \sfH(t,x+vt)-\sfH(0,x)\label{josefa} ,
\end{align}
where  $\sfH=\sfH[\sfX]$ is defined in \eqref{xiab}.
 The position at time $t$ of the quasiparticle associated to $(x,v,\err)$ is given by
 \begin{align}
   \tty_{v,t}(x)[\sfX] &:=  \ttu_{v,t}\bigl(D_{\sfX,0}(x)\bigr)[D_0\sfX]\notag \\
   &= x +\sfH(0,x) +vt+  \sfH(t,x+vt)-\sfH(0,x) \label{josefa90}\\[1mm]
                                                         &=x +vt+  \sfH(t,x+vt),
                                                           \label{josefa9}
 \end{align}
 in particular $\tty_{0,t}(0) = \sfH(t,0)=\tto_t$. 
If $\sfY=D_0\sfX$, \eqref{uty} and \eqref{josefa9} give
 \begin{align}
   U_t\sfY = \bigl\{\bigl(\tty_{v,t}(x),v,\err\bigr): (x,v,\err)\in \sfX \bigr\}.
   \label{josefa2}
 \end{align}
Assume $\mu\in\euM$, let $\sfX$ be a Poisson process with intensity measure $\mu$ and $\sfH=\sfH[\sfX]$. Recall the macroscopic field $H[\mu]$ defined in \eqref{hmu} and define the macroscopic dilation $\D_0[\mu]$ and flow $j[\mu]$, by
 \begin{align}
   \D_{0}(x) &:= x+ \EE \sfH(0,x) = x +H(0,x), \\
   j(x,v,t) &:= \EE \sfH(t,x+vt)-\EE \sfH(0,x) = H(t,x+vt) - H(0,x).
 \end{align}
 From \eqref{josefa90} and \eqref{josefa} we have
  \begin{align}
    y_{v,t}(x)&:=\EE\tty_{v,t}(x) = \D_{0}(x) + vt + j(x,v,t) \label{josefa3}\\
    &= x+vt +H(t,x+vt). \label{sefa}
  \end{align}

\paragraph{Law of large numbers} Let $\sfX^\eps$ be a Poisson process with intensity measure $\eps^{-1}\mu$ and  
  denote the rescaled positions
  \begin{align}
    \tty^\eps_{v,t}(x) &:= \eps \tty_{v,t}(x)[\sfX^\eps],\\
    D^\eps_{a}(x) &:= x + \eps m_a^x{\sfX^\eps}.
  \end{align}

\begin{proof}[\rm {Proof of Theorem \ref{tlln1}}]  This is a corollary to Proposition \ref{llnCLS}, by taking $\mu(dx,dv,d\err) = f(x,v,\err) \,dxdvd\err$.  Using the identities \eqref{josefa}, \eqref{josefa9}, \eqref{josefa3} and the law of large numbers \eqref{lln8} for $\sfH^\eps$, we have    
\begin{align*}
      \lim_{\eps\to0} \ttj^\eps(x,v,t) &= j(x,v,t),  \quad \PP\text{-a.s.}\\
   \lim_{\eps\to0} \tty^\eps_{v,t}(x) &= y_{v,t}(x),  \quad \PP\text{-a.s.}\\
     \lim_{\eps\to0} D^\eps_{a}(x) &= \D_{a}(x),  \quad \PP\text{-a.s.}    \tag*{\qedhere}           
\end{align*}
\end{proof}

 \paragraph{Hard rod hydrodynamics}
\label{hydrohr6}
 Let $\mu\in\euM$ and $\sfX^\eps$ be a Poisson process in $\RR^3$ with intensity measure $\eps^{-1}\mu$. Let $\sfY^\eps:=D_0\sfX^\eps$. Recall the hard rod length empirical measure at time $t$, starting with $\sfY^\eps$, 
 \begin{align}
   \ttK^\eps_t\varphi &= \eps\sum_{(y,v,\err)\in U_t\sfY^\eps}
   \err\, \varphi(y,v,\err)= \eps\sum_{(x,v,\err)\in\sfX^\eps} \err\, \varphi\bigl(\tty^\eps_{v,t}(x),v,\err\bigr), \label{ket0}
 \end{align}
by \eqref{josefa2}. 
   Denote
  \begin{align}
     \kappa_t\varphi
   := \iiint \mu(dx,dv,d\err)\,\err\,\varphi(y_{v,t}(x),v,\err).
  \end{align}
\begin{theorem}[\rm Law of Large Numbers]\label{lln}
Let the test function $\varphi$ be
  \begin{align}
    \varphi(y,v,\err)& = \phi(v,r) \,\one\{y\in[a,b]\},\quad v,\err\in\RR. \label{phi}  \end{align}
 where $\phi:\RR^2\to\RR$ is some bounded non-negative function and $a<b\in\RR$. Then,
 \begin{align}
   \lim_{\eps\to0} \ttK^\eps_t\varphi = \kappa_t\varphi,\qquad \PP\text{-a.s. and in }L_1.
   \label{josefa8}
 \end{align}
\end{theorem}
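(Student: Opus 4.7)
The plan is to linearise the random-position indicator by replacing $\tty^\eps_{v,t}(x)$ with its deterministic limit $y_{v,t}(x)$, reducing $\ttK^\eps_t\varphi$ to a linear functional of the Poisson process $\sfX^\eps$ to which the law of large numbers of Proposition~\ref{llnCLS} applies directly. The linearisation error is then controlled by a sandwich argument that requires uniform-on-compacts convergence of $\sfH^\eps(t,\cdot)$ to $H(t,\cdot)$, a strengthening of the pointwise convergence already in hand.

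First, I would introduce the linearised functional
\[
\widehat\ttK^{\eps,A}_t\varphi \;:=\; \eps\sum_{(x,v,\err)\in\sfX^\eps}\err\,\phi(v,\err)\,\one\{y_{v,t}(x)\in A\},\qquad A\subset\RR,
\]
and observe that the iid decomposition from the proof of Proposition~\ref{llnCLS} (write $\sfX^\eps=\bigcup_{i=1}^{\eps^{-1}}\sfX_i$, apply the classical strong law to each, identify the common mean by Campbell) gives
\[
\widehat\ttK^{\eps,A}_t\varphi\;\mathop{\longrightarrow}_{\eps\to0}\;\iiint f(x,v,\err)\,\err\,\phi(v,\err)\,\one\{y_{v,t}(x)\in A\}\,dxdvd\err
\]
almost surely and in $L^1$, with the mean finite by the integrability built into $\euF$. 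Taking $A=[a,b]$ recovers the target $\kappa_t\varphi$.

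Next, I would upgrade Proposition~\ref{llnCLS} to uniform convergence on every compact $K\subset\RR$, i.e.\ $\sup_{z\in K}|\sfH^\eps(t,z)-H(t,z)|\to 0$ almost surely. The three inputs are the pointwise convergence from Proposition~\ref{llnCLS}, the continuity of $z\mapsto H(t,z)$ (which follows from $f\in\euF$), and an almost sure uniform-in-$\eps$ bound on the total variation of $\sfH^\eps(t,\cdot)$ over $K$, obtained by applying Proposition~\ref{llnCLS} with $|\err|$ in place of $\err$. A standard Polya-type argument for BV functions with continuous pointwise limits on a dense set then yields uniform convergence. Choosing $K$ large enough to contain the effective $x+vt$-range (truncating $v$ using the $v^2$-integrability in~\eqref{euM}), there is a random $\eps_0(\omega)>0$ such that $|\tty^\eps_{v,t}(x)-y_{v,t}(x)|<\delta$ for all $\eps<\eps_0$, uniformly in the relevant $(x,v)$. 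Decomposing the integrand $\err\phi$ into positive and negative parts so that inequalities are preserved, one obtains a two-sided sandwich of $\ttK^\eps_t\varphi$ between the linearised functionals on the enlarged and shrunk intervals $[a\pm\delta,b\mp\delta]$. Sending $\eps\to 0$ via the linearisation limit at each endpoint, and then $\delta\to 0$ by absolute continuity of the limit integral with respect to the domain, yields the almost sure convergence. Convergence in $L^1$ follows from the resulting uniform integrability, itself a consequence of a Campbell-based second-moment bound on $\ttK^\eps_t\varphi$.

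The main obstacle is the uniform field convergence used in the sandwich step. Proposition~\ref{llnCLS} supplies only pointwise convergence, and the signed-length setting breaks the natural monotonicity of $z\mapsto\sfH^\eps(t,z)$ that would otherwise allow a clean Dini-type argument in the purely positive-length case. The almost-sure total-variation control furnished by Proposition~\ref{llnCLS} applied to $|\err|$ is the key technical workaround, permitting the extension of pointwise convergence to uniform convergence on compacts without sign constraints on the rod lengths.
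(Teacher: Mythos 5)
Your first half is identical to the paper's: your linearised functional $\widehat{\ttK}^{\eps,A}_t\varphi$ is exactly the paper's $\ttA^\eps_t\varphi$, and its almost sure and $L_1$ convergence via the iid decomposition behind Proposition~\ref{llnCLS} is the same step. Where you genuinely diverge is in controlling the error $\ttK^\eps_t\varphi-\ttA^\eps_t\varphi$. The paper exploits that (for nonnegative lengths) both $x\mapsto\tty^\eps_{v,t}(x)$ and $x\mapsto y_{v,t}(x)$ are nondecreasing, so the symmetric difference of the two preimages of $[a,b]$ is contained in two intervals around the generalized inverses $\ttx^\eps_{v,t}(a)$, $\ttx^\eps_{v,t}(b)$, whose convergence to $x_{v,t}(a)$, $x_{v,t}(b)$ localizes the error; the signed case is then handled by splitting the sum by the sign of $\err$. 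You instead upgrade the field convergence $\sfH^\eps(t,\cdot)\to H(t,\cdot)$ to uniform convergence on compacts and sandwich $[a,b]$ between $[a+\delta,b-\delta]$ and $[a-\delta,b+\delta]$. Your route is arguably cleaner for signed lengths, since it never needs monotonicity of the position maps; the price is a strictly stronger convergence statement for the field, plus an extra argument (which you only gesture at) that particles with $x+vt$ outside a fixed compact cannot have $\tty^\eps_{v,t}(x)\in[a,b]$ --- automatic under monotonicity, but not automatic with negative rods. Your $L_1$ step also differs: the paper bounds $\EE|\ttK^\eps_t\varphi-\ttA^\eps_t\varphi|$ directly by Slyvniak--Mecke, whereas a second-moment bound on $\ttK^\eps_t\varphi$ itself is not a direct Campbell computation because the indicator depends on the whole configuration.

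There is one step that would fail as written: the claim that pointwise convergence on a dense set, continuity of the limit, and a uniform total-variation bound yield uniform convergence. This is false for general BV functions: $F_\eps:=\one\{\,\cdot\in[a+\eps/2,\,a+\eps]\,\}$ converges to $0$ pointwise everywhere, has total variation $2$ uniformly in $\eps$, and the limit is continuous, yet $\sup_z|F_\eps(z)|=1$. Polya's argument genuinely requires monotonicity, not just bounded variation. The repair is immediate and is essentially what your $|\err|$ device is reaching for: decompose $\sfH^\eps(t,\cdot)=\sfH^{\eps,+}(t,\cdot)-\sfH^{\eps,-}(t,\cdot)$ according to the sign of $\err$. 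Every jump of $z\mapsto\sfH^\eps(t,z)$ has size $+\eps\err$ at a point $z+wt$, so each of the two parts is nondecreasing in the space variable; each converges pointwise a.s.\ by Proposition~\ref{llnCLS} applied to the intensity restricted to $\{\err>0\}$ and $\{\err<0\}$ respectively; and each limit is continuous and nondecreasing because $f\in\euF$ has a locally bounded density. Polya's theorem then applies to each part separately, and uniform convergence on compacts of the difference follows. With this substitution, and an explicit tail estimate justifying the restriction to a compact in the variable $x+vt$, your argument closes and gives a valid alternative to the paper's monotonicity-based proof.
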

\begin{proof}
We first show the nonnegative length case, and afterwards sketch the general case. Assume $\sfX\in\kX_{\ge0}$ and denote
  \begin{align}
    \label{at1}
   \ttA^\eps_t\varphi
   &:= \eps\sum_{(x,v,\err)\in\sfX^\eps}
     r\varphi \bigl(y_{v,t}(x),v,\err \bigl).
  \end{align}
Since $\ttA^\eps_t\varphi$ is of the form $\eps\sum_{(x,v,\err)\in\sfX^\eps} \tvarphi((x,v,\err))$, Proposition \ref{llnCLS} implies
\begin{align}
  \lim_{\eps\to0} \ttA^\eps_t\varphi
  &= \kappa_t\varphi,
    \qquad \PP\text{-a.s.}  \label{josefa4}
 \end{align}
 because $\EE\ttA^\eps_t\varphi=\kappa_t\varphi$. To show that $|\ttK^\eps_t\varphi-\ttA^\eps_t\varphi|$ converges to zero, write
 \begin{align}
  \bigl|\ttK^\eps_t\varphi-\ttA^\eps_t\varphi\bigr|
   & \le \eps\sum_{(x,v,\err)\in\sfX^\eps}
     \err\,\phi(v,\err) \,
     \Bigl|\one\{\tty^\eps_{v,t}(x)\in[a,b]\bigl\}
     -\one\{y_{v,t}(x)\in[a,b]\bigl\}\Bigr|\notag\\
  &\le \eps\sum_{(x,v,\err)\in\sfX^\eps}
     \err\,\phi(v,\err) \,
   \bigl(
   \one\{x\in I[\ttx^\eps_{v,t}(a),x_{v,t}(a)]\} \notag\\
   &\hspace{35mm}+\one\{x\in I[\ttx^\eps_{v,t}(b),x_{v,t}(b)]\}\bigr), \label{a55}
 \end{align}
 where $I[z,\tz]:=[z\wedge \tz,z\vee \tz]$, and 
 \begin{align}
   \ttx^\eps_{v,t}(z)&:= \inf\{x: \tty^\eps_{v,t}(x)>z\},\label{xz1}\\
   x_{v,t}(z)&:= \inf\{x: y_{v,t}(x)>z\}. \label{xz2}
 \end{align}
 Take $a'<a< a''$. Since $\tty^\eps_{v,t}(x)\to y_{v,t}(x)$ and both $\tty^\eps_{v,t}(x)$ and $y_{v,t}(x)$ are non decreasing functions of $x$, we have $\ttx^\eps_{v,t}(z)\to x_{v,t}(z)$ for each $z\in\RR$ and consequently, for sufficiently large $\eps$, $x_{v,t}(a')\le \ttx^\eps_{v,t}(a)\le x_{v,t}(a'')$. Since the same domination holds for $x_{v,t}(a)$, we have
 \begin{align}
&\lim_{\eps\to0}     \eps\sum_{(x,v,\err)\in\sfX^\eps}
     \err\,\phi(v,\err) \,
   \one\{x\in I[\ttx^\eps_{v,t}(a), x_{v,t}(a)]\}  \label{sum90}\\
  &\qquad \le
\lim_{\eps\to0}   \eps\sum_{(x,v,\err)\in\sfX^\eps}
     \err\,\phi(v,\err) \, \one\{x\in [x_{v,t}(a'),x_{v,t}(a'')]\}\\
 &\qquad= \iiint \mu(dx,dv,d\err)\, \err\,\phi(v,\err) \, \one\{x\in [x_{v,t}(a'),x_{v,t}(a'')]\}.
 \end{align}
The same argument works for the expression involving $b$ in \eqref{a55}. Taking limits $a'\nearrow a$, $a''\searrow a$, $b'\nearrow b$ and $b''\searrow b$, and the fact that $\mu\in\euM$ and $\phi$ is uniformly bounded, we get that  \eqref{a55} goes to zero a.s.. 

To show the convergence in $L_1$, use \eqref{a55} to get 
     \begin{align}
      \EE\bigl|\ttK^\eps_0\varphi-\ttA^\eps_0\varphi\bigr|
        &\le \EE\eps\sum_{(x,v,\err)\in\sfX^\eps}
     \err\,\phi(v,\err) \,
     \Bigl( \one\{x\in I[\ttx^\eps_{v,t}(a),x_{v,t}(a)]\}\notag\\
   &\qquad\qquad+\one\{x\in I[\ttx^\eps_{v,t}(b),x_{v,t}(b)]\}\Bigr)\\
      &= \iiint \mu(dx,dv,d\err) 
     \err\,\phi(v,\err) \,
    \Bigl(\PP(x\in I[\ttx^\eps_{v,t}(a),x_{v,t}(a)]) \notag\\
   &\qquad\qquad+ \PP(x\in I[\ttx^\eps_{v,t}(b),x_{v,t}(b)])\Bigr), \label{a56}
 \end{align}
 where the identity follows from the Slyvniak-Mecke theorem \cite{MR2004226}. The expression \eqref{a56} converges to zero by bounded convergence and the fact that $\ttx^\eps_{v,t}(z)\to x_{v,t}(z)$ a.s..

To show the general case $\err\in\RR$, multiply each term of the sum \eqref{sum90} by $\one\{r\ge 0\} + \one\{r<0\}$ obtaining two sums. To conclude, apply analogous monotonicity arguments to each of the sums.
\end{proof}
\penalty -100
\paragraph{Positional fluctuations}
\begin{proof}[Proof of Theorem \ref{crf}]
 In view of \eqref{josefa9}, we have 
   \begin{align}
     \frac1{\sqrt\eps} \Bigl(\tty^\eps_{v,t}(x) - y_{v,t}(x)\Bigr)
     & \,=\, \frac1{\sqrt\eps}\;\Bigl( \sfH^\eps(t,x+vt)-\EE\sfH^\eps(t,x+vt)\Bigr),
   \end{align}
   which converges to the Levy Chentsov field, by Proposition \ref{sef1}.
   Observe that the covariances are constant in $\eps$ and given by
    \begin{align}
      \Cov\bigl(\tty^\eps_{v,t}(x),\tty^\eps_{v,t}(\tx)\bigr) &= \frac1{\eps}\EE \Bigl(\bigl(\tty^\eps_{v,t}(x) - y_{v,t}(x)\bigr)\,\bigl(\tty^\eps_{\tte,\tv}(\tx) - y_{\tte,\tv}(\tx)\bigr)\Bigr)\\
                                                                   &= \frac12\Bigl(\mu_2(\boa)+\mu_2(\bob)-\mu_2(\bab)\Bigr),
  \end{align}
  where $\ba:= (t,x+vt)$, $\bb:=(\tte,\tx+\tv\tte)$.
\end{proof}

\section{Macroscopic dynamics}
\label{S5}

\paragraph{Admissible functions}

In order to find partial differential equations and stochastic differential equations for the hydrodynamic limit solutions, in this section we assume that the measure $\mu\in \euM$ is absolutely continuous with density $f\in\euF$, defined as follows.

Measurability of all functions is assumed.
Define the set of dominated macroscopic densities as follows. 
\begin{align}
  \euL &\coloneqq \Bigl\{\gamma:\RR^2 \to \RR_{\ge0}\;:\; 
  \iint (v^2+\err^2+1)\gamma(v,\err)dvd\err<\infty\Bigr\},\\
  \euF_\gamma&\coloneqq\Bigl\{f:\RR^3\to \RR_{\geq0}: f(\cdot,v,\err)\in \C^1(\RR), \text{ and }\\
       &\hskip 1cm\max\{\|f(\cdot,v,\err)\|_\infty,\, \|\partial_x f(\cdot,v,\err)\|_\infty\}\leq \gamma(v,\err),\; (v,\err)\in \RR^2\Bigr\}.\notag
\end{align}
Define the mass and momentum functions of $f\in\cup_{\gamma\in\euL}\euF_\gamma$ by
\begin{align}
  \sigma_f(x)&:= \iint\,\err\,f(x,v,\err)\,dvd\err,\qquad
\zeta_f(x):= \iint\,v\,\err\,f(x,v,\err)\,dvd\err,\label{zet1}
\end{align}
respectively. By the Dominated Convergence Theorem, $\sigma_f$ and $\zeta_f$ are in $\C^1(\RR)$. Moreover, if $f$ is in $\euF_\gamma$, then,  $\normi{\sigma_f}$ and $\normi{\dot{\sigma}_f}$ are bounded above by $\norm{\gamma}_1$ and, $\normi{\zeta_f}$ and $\normi{\dot{\zeta}_f}$ are bounded above by $\norm{v\err\gamma(v,\err)}_1$. Define
\begin{align}
  \label{sigma>0}
  \euF &\coloneqq \Bigl\{f\in\bigcup_{\gamma\in\euL}\euF_\gamma:\sigma_f(x)>0, \text{ for all } x\in \RR\Bigr\}.
\end{align}
The condition $\sigma_f>0$ is necessary to have invertible dilation operators.

\paragraph{Dilation and contraction} Define the subset of dilated macroscopic densities as 
\[\euG := \bigl\{f\in\euF : \normi{\sigma_f}<1\bigr\}.\]
 Let $f$ in $ \euF$, $g$ {in} $\euG$ and $a $ in $\RR$.
 Define the  dilation and contraction functions $\D_{f,a}:\RR\to\RR$, $ \C_{g,a}:\RR\to\RR$, by
\begin{align}
  \D_{f,a}(b) &:= b+  \int_a^b \sigma_f(x) dx,\qquad 
 \C_{g,a}(b):= b-  \int_a^b \sigma_g(y) dy,\qquad a,b\in\RR. \notag
\end{align}
Define the dilation and contraction operators $\D_a: \euF\to\euG$, $\C_a: \euG\to\euF$ by 
\begin{align}
  \label{daf1}
  \D_af(y,v,\err) &= \frac{f(\D_{f,a}^{-1}(y),v,\err)}{1+\sigma_f(\D_{f,a}^{-1}(y))}, \quad 
  \C_ag(x,v,\err) = \frac{g(\C_{g,a}^{-1}(x),v,\err)}{1-\sigma_g(\C_{g,a}^{-1}(x))}. \end{align}
Notice that $\D_a^{-1}=\C_a$ and that the derivatives are
\begin{align}
  \label{daf2}
  \frac{d}{dx}\D_{f,a}(x) &=1+\sigma_f(x),
  \qquad
  \frac{d}{dy}\C_{g,a}(y) = 1-\sigma_g(y).
\end{align}
In particular, $\frac{d}{dy}\C_{g,a}(y)\geq 1 - \normi{\sigma_g}>0$. Thus, both functions are diffeomorphisms, the former is a dilation and the latter is a contraction.

The dilation and contraction operators conserve mass. For $f$ in $\euF$, $g$ in $\euG$ and $a$, $b$, $c$ in $\RR$, we have
\begin{align}
  \int_{\D_{f,a}(b)}^{\D_{f,a}(c)}\sigma_{\D_af}(y)dy &= \int_{b}^{c}\sigma_f(x)dx,\qquad    \int_{\C_{g,a}(b)}^{\C_{g,a}(c)}\sigma_{\C_ag}(x)dx = \int_{b}^{c}\sigma_g(y)dy.\notag
\end{align}
{This follows considering the change of variable $x = \D^{-1}_{f,a}(y) $ for the first identity and $y=\C^{-1}_{g,a}(x)$ for the second one.}
 Along the way, one can verify the following identities
\[\sigma_{\D_{a}f}\bigl(\D_{f,a}(x)\bigr)=\frac{\sigma_f(x)}{1+\sigma_f(x)}\]
\[\sigma_{\C_{a}g}\bigl(\C_{g,a}(y)\bigr)=\frac{\sigma_g(y)}{1-\sigma_g(y)}\]
The \emph{shift} operator $S_a$ applied to $f\in\euF$ or $g\in\euG$ is given by
\begin{align}
  \label{eq:shift5}
  S_af(x,v,\err) := f(x-a,v,\err).
\end{align}

\begin{proposition} Let $a$, $\qo$ and $\po$ in $\RR$.
  
  \noindent 1. If $f$ in $\euF$ and $g\coloneqq \D_{\qo}f$; then $\C_{g,\qo}=\D_{f,\qo}^{-1}$ {and $g$ in $\euG $.}

  \noindent 1'. If 
$g$ in $\euG$ and $f\coloneqq \C_{\qo}g$; then  $\C_{g,\qo}=\D_{f,\qo}^{-1}$. 

\noindent 2.  The operators $\D_{\qo}$ and $\C_{\qo}$ are inverses of each other.

 \noindent 3.  With $b\coloneqq \po - \D_{f,\qo}^{-1}(\po-a)$, the following diagram commutes.
\begin{center}
\begin{tikzcd}
& \euF \arrow[r, "\D_{\qo}"] \arrow[d, "S_b"'] & \euG \arrow[d,"S_a"]\\
& \euF \arrow[r, "\D_{\po}"'] & \euG
\end{tikzcd}
\end{center}
\noindent 4. With $c\coloneqq \C_{S_ag,\qo}(\po+a)-\po$, the following diagram commutes. (Since $\C_{\po}$ is a bijection with inverse $\D_{\po}$, we can express $c$ as function on $\euF$, by the formula $c(f) = \C_{S_a\D_{\po}f,\qo}(\po+a)-\po$.)
\begin{center}
\begin{tikzcd}
& \euG \arrow[r, "\C_{\po}"] \arrow[d, "S_a"'] & \euF \arrow[d,"S_c"]\\
& \euG \arrow[r, "\C_{\qo}"'] & \euF
\end{tikzcd}
\end{center}
\end{proposition}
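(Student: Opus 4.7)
My plan is to prove the four parts in order, with Parts 2 and 4 following cleanly from Parts 1 and 3 after some direct change-of-variables computation in the latter.

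For Part 1, I would first derive the pointwise identity $\sigma_g(y) = \sigma_f(x)/(1+\sigma_f(x))$ with $x = \D_{f,\qo}^{-1}(y)$, by integrating the formula \eqref{daf1} against $\err\,dv\,d\err$. This immediately gives $\normi{\sigma_g} \le \normi{\sigma_f}/(1+\normi{\sigma_f}) < 1$, which is the only nontrivial ingredient for $g\in\euG$; the remaining regularity requirements transfer from $f$ through the $C^1$ diffeomorphism $\D_{f,\qo}^{-1}$, whose derivative equals $1/(1+\sigma_f)$ by \eqref{daf2}. To show $\C_{g,\qo} = \D_{f,\qo}^{-1}$, I would substitute $z = \D_{f,\qo}(w)$ in $\int_{\qo}^{y}\sigma_g(z)\,dz$; the Jacobian $dz = (1+\sigma_f(w))\,dw$ cancels against the denominator of the $\sigma_g$ identity, so the integral collapses to $\int_{\qo}^{\D_{f,\qo}^{-1}(y)}\sigma_f(w)\,dw = y - \D_{f,\qo}^{-1}(y)$, which gives the claim. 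Part $1'$ is the symmetric computation using $\sigma_f(x) = \sigma_g(y)/(1-\sigma_g(y))$ evaluated at $y = \C_{g,\qo}^{-1}(x)$.

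Part 2 is then one line: by Part 1, $\C_{g,\qo}^{-1} = \D_{f,\qo}$, so in $\C_{\qo}(\D_{\qo}f)(x,v,\err)$ the numerator equals $g(\D_{f,\qo}(x),v,\err) = f(x,v,\err)/(1+\sigma_f(x))$ while the denominator equals $1 - \sigma_f(x)/(1+\sigma_f(x)) = 1/(1+\sigma_f(x))$, yielding $f(x,v,\err)$. The identity $\D_{\qo}\C_{\qo}g = g$ follows identically using Part $1'$.

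For Part 3, the equality $S_a\D_{\qo}f = \D_{\po}S_b f$ reduces, after using $\sigma_{S_b f}(\cdot) = \sigma_f(\cdot - b)$ in \eqref{daf1}, to showing that the two arguments $\D_{f,\qo}^{-1}(y-a)$ and $\D_{S_b f,\po}^{-1}(y) - b$ coincide for every $y$; once those agree, both the $f(\cdot,v,\err)$ factors and the $1+\sigma_f(\cdot)$ factors automatically match. Setting $w = \D_{f,\qo}^{-1}(y-a)$ and applying a change of variable in the integral defining $\D_{S_b f,\po}(w+b)$, this pointwise equality collapses to the scalar relation $\D_{f,\qo}(\po-b) = \po - a$, which is exactly the defining equation for $b$. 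Part 4 is the dual statement; I would obtain it either by the analogous direct argument, which reduces to $\C_{g,\po}^{-1}(x-c) = \C_{S_a g,\qo}^{-1}(x) - a$ and thus encodes the defining formula for $c$, or conceptually by composing the Part 3 diagram on both sides with the appropriate contractions and invoking Part 2.

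The main obstacle is purely bookkeeping: in Parts 3 and 4 one must track carefully that the shift $b$ (resp.\ $a$) propagates into the integrand $\sigma_f(\cdot - b)$ while the lower limit $\po$ of the integral defining $\D_{S_b f,\po}$ stays fixed; this is the one place where a sign or endpoint error is easy to make. Once that substitution is laid out, no analytic subtleties remain beyond the standing conditions $\sigma_f>0$ and $\normi{\sigma_g}<1$ already built into the definitions of $\euF$ and $\euG$.
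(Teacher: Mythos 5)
Your proposal is correct and follows essentially the same route as the paper: the identity $\sigma_{\D_\qo f}(\D_{f,\qo}(x))=\sigma_f(x)/(1+\sigma_f(x))$, the verification that $\C_{g,\qo}\circ\D_{f,\qo}=\mathrm{id}$ (you integrate a change of variables where the paper checks the derivative is $1$ plus agreement at one point), the same one-line substitution for Part 2, and the same reduction of Parts 3 and 4 to a scalar identity that is exactly the defining equation for $b$ (resp.\ $c$). A minor point in your favor: your bound $\normi{\sigma_g}\le \normi{\sigma_f}/(1+\normi{\sigma_f})<1$ is cleaner than the paper's remark that pointwise $\sigma_g(y)<1$ suffices.
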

\begin{proof} 
1. By the definitions, $\C_{g,\qo}(\D_{f,\qo}(\qo))=\C_{g,\qo}(\qo)=\qo$. On the other hand, by the chain rule and the definitions,
\begin{align*}
    \frac{d}{dz}\bigl(\C_{g,\qo}\circ \D_{f,\qo}(z)\bigr)\big{|}_{z=x} &=\Bigl(\bigl(\frac{d}{dz}\C_{g,\qo}(z)\bigr)\big{|}_{z=\D_{f,\qo}(x)}\Bigr)\Bigl( \bigl(\frac{d}{dz}\D_{f,\qo}(z)\bigr)\big{|}_{z=x} \Bigr)\\ &= \bigl(1-\sigma_g(\D_{f,\qo}(x))\bigr)\bigl(1+\sigma_f(x)\bigr) \\ &= \Bigl(1-\frac{\sigma_f(x)}{1+\sigma_f(x)}\Bigr)\bigl(1+\sigma_f(x)\bigr)= 1
\end{align*}
To show that $g$ in  $\euG$ we compute its mass function:
 \begin{align*}
\sigma_g(y) & = \sigma_{ \D_{\qo}f}(y) = \iint\,\err\, D_{\qo}f(y,v,\err)\,dvd\err \\ &  =   \iint\,\err\
 \frac{f(\D_{f,\qo}^{-1}(y),v,\err)}{1+\sigma_f(\D_{f,\qo}^{-1}(y))} \,dvd\err\ 
 = \dfrac{\sigma_f(\D_{f,\qo}^{-1}(y))}{1+\sigma_f(\D_{f,\qo}^{-1}(y))},
\end{align*}
and so $\normi{\sigma_g} < 1$ follows from the fact than for every $y$, $\sigma_g(y)<1 $.\\

\noindent 1'. The proof is analogous to the one before.\\

\noindent 2. Let us prove that $\C_{\qo}\circ \D_{\qo}$ is the identity operator in $\euF$. Let $f$ in $\euF$, and let $g\coloneqq \D_{\qo}f$. Then, by the first statement,  for every $(x,v,\err)$ in $\RR^3$ we have that
{
\begin{align*} 
\C_{\qo}\circ \D_{\qo} f(x,v,\err) & = \C_{\qo} g(x,v, \err) =  \frac{g(\C_{g,\qo}^{-1}(x),v,\err)}{1-\sigma_g(\C_{g,\qo}^{-1}(x))} =  \frac{g(\D_{f,\qo}(x),v,\err)}{1-\sigma_g(\D_{f,\qo}(x))}  \\ &=  \dfrac{f(x,v, \err)}{\big( 1+ \sigma_f(x) \big) \big( 1-\sigma_g(\D_{f,\qo}(x)) \big)} = f(x,v, \err)
\end{align*}
}
as claimed. 
Analogously it can be shown that $\D_{\qo}\circ \C_{\qo}$ is the identity in $\euG$.\\

\noindent 3. We want to show that {$ S_a \D_{\qo}f(x,v,\err) = \D_{\po}S_bf(x,v,\err)$, $(x,v,\err)$ in $\RR^3$. }
By definition, this is equivalent to
\begin{align} \frac{f(\D_{f,\qo}^{-1}(y-a),v,\err)}{1+\sigma_f(\D_{f,\qo}^{-1}(y-a))}=\frac{f(\D_{f,\po}^{-1}(y)-b,v,\err)}{1+\sigma_f(\D_{f,\po}^{-1}(y)-b)}
\end{align}
So it suffices to show $\D_{f,\qo}^{-1}(y-a) = \D_{f,\po}^{-1}(y)-b=:q^{(b)}$. But $q^{(b)}= \D_{f,\po-b}^{-1}(y-b)$, by definition. Hence, it suffices to prove that $y-a = q^{(b)} + \int_{\qo}^{q^{(b)}}\sigma_f(x)dx$. The identity holds for $y=\po$, by definition of $b$. Taking the derivative w.r.t. $q^{(b)}$, we obtain $1+\sigma_f(q^{(b)})$ on both sides.\\

\noindent 4. It reduces, analogously to the proof of the third statement, to verify the equality $\C_{g,\po}^{-1}(y-c)=\C_{S_ag,\qo}^{-1}(y)-a$, which certainly holds for $y=\po+c$. \qedhere
\end{proof}
\begin{obs}\label{remarkk} In particular, we have $\C_{x+z}S_{z}\D_x = S_{z}$  and $\D_{x+z}S_{z} = S_{z} \D_x$, 
for all $x$ and $z$ in $\RR$.
\end{obs}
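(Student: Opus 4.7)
The plan is to obtain this remark as a direct specialization of parts 3 and 4 of the preceding proposition, since it is stated as a corollary ("In particular"). I would first focus on the second identity $\D_{x+z}S_{z} = S_{z}\D_{x}$, because part 3 of the proposition is exactly of this form: $S_a\D_{\qo} = \D_{\po}S_b$. To match, I would take $\qo = x$, $\po = x+z$, and $a = z$, and then I would just need to verify that the value of $b$ prescribed by part 3, namely $b = \po - \D_{f,\qo}^{-1}(\po - a)$, reduces to $z$.

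This verification is immediate: by the definition of the dilation function, $\D_{f,x}(x) = x + \int_x^x \sigma_f = x$, hence $\D_{f,x}^{-1}(x) = x$. Plugging into the formula for $b$ gives $b = (x+z) - \D_{f,x}^{-1}(x) = (x+z) - x = z$, exactly as needed. At this point part 3 yields $S_z \D_x f = \D_{x+z} S_z f$ for every admissible $f$, which is the second claimed identity at the operator level.

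For the first identity $\C_{x+z}S_{z}\D_{x} = S_{z}$, I would simply apply $\C_{x+z}$ on the left of the identity just established and invoke part 2 of the proposition, which says $\C_{x+z}\D_{x+z}$ is the identity on $\euF$. This gives $\C_{x+z} S_z \D_x = \C_{x+z}\D_{x+z} S_z = S_z$. Alternatively, one can read the first identity directly off part 4 with $\qo = x+z$, $\po = x$, $a = z$, and then check analogously that the corresponding $c$ equals $z$; the computation parallels the one above, using $\C_{g,x+z}(x+z) = x+z$.

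I do not expect any real obstacle: the only substantive point is matching parameters correctly and using $\D_{f,a}(a) = a$ (and symmetrically $\C_{g,a}(a) = a$) to simplify the formulas for $b$ and $c$. The rest is invoking the inverse relation from part 2.
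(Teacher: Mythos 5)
Your proposal is correct and matches the paper's (implicit) intent: the remark is stated as a direct specialization of parts 2--4 of the preceding proposition, and your parameter choices $\qo=x$, $\po=x+z$, $a=z$ together with the observation $\D_{f,x}(x)=x$ (so $b=z$, and likewise $c=z$) are exactly the verification required. Nothing further is needed.
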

\paragraph{Time evolution}

Define the \emph{ideal gas evolution}  operator $\T_t:\euF\to\euF$ by
\begin{align}\label{ttf22}
  \T_tf(x,v,\err):=f\circ T_{-t}(x,v,\err)=f(x-vt,v,\err).
\end{align}
This operator is a bijection with inverse $\T_{-t}$. Notice that $\T_t$ conserves $\euF$: $ f\in \euF$  if and only if $\T_tf \in \euF$.

For $f\in\euF$ define the (macroscopic) \emph{mass flow}
\begin{align}
  j_{f}^+(x,v,t)
  &:= \int_{-\infty}^{v}\int_{x}^{x+(v-w)t} \int_{-\infty}^\infty \err\,f(z,w,\err)\,d\err\,dz\,dw \label{maflow+}\\
  j_{f}^-(x,v,t)
  &:=  \int_{v}^{\infty}\int_{x+(v-w)t}^{x}\int_{-\infty}^\infty\err\,f(z,w,\err)\,d\err\,dz\,dw, \label{maflow-}\\[2mm]
  j_{f}(x,v,t) &:= j_{f}^+(x,v,t) -  j_{f}^-(x,v,t). \label{maflow}
\end{align}
The \emph{flow} $j_{f}^+(x,v,t)$ gives the mass crossing the trajectory $(s,x+vs)_{s\in[0,t]}$ with speeds less than $v$, when the initial density is $f$, and $j_{f}^-(x,v,t)$ collects the faster mass crossing the same trajectory. The net flow $j_f$ is the difference of those.
If $f\in \euF_\gamma$, for some $\gamma\in\euL$, then
\begin{align}
  \label{j+<infty}
| j_{f}^+(x,v,t)|
  &\le\int_{-\infty}^{v}\int_{-\infty}^\infty  |v-w|\,|t|\, |\err|\,\gamma(w,\err)\,d\err\,dw \,<\,\infty,
\end{align}
by definition of $\euL$. Analogously $|j_{f}^-(x,v,t)|<\infty$. We conclude that 
\begin{align}
  \label{j<infty}
  |j_{f}^\pm(x,v,t)| <\infty,\;\; |j_{f}(x,v,t)|<\infty,\quad f\in\euF,\; x,v,t\in \RR.
\end{align}
Define the \emph{hard-rod evolution operator} $\U_t:\euG\to\euG$ by
\begin{align}
   \U_tg(y,v,\err) &:=  S_{\tto_t}\D_0\T_t\C_{0} g(y,v,\err) \label{hre1}
\\
  \tto_t&:= j_{\C_0g}(0,0,t).\label{otg2}
 \end{align}
 As in the microscopic case, $\D_0\T_t\C_0 g$ is the macroscopic hard rod evolution as seen from a zero-speed, zero-length quasiparticle $(0,0,0)$, added at the origin at time $0$, and $\tto_t$ is the position of this quasiparticle at time $t$. The shift by $\tto_t$ is performed in order to obtain the hard rod evolution of $g$ (as seen from the origin).

For every $g$ in $\euG$, we define the macroscopic position at time $t$ of a hard rod $(q,v,\err)$, as the bijection $u_{g,v,t}:\RR\to \RR$
 \begin{align}
   \label{ugt}
   u_{g,v,t}(q) := q+ vt +  j_{\C_qg}(q,v,t).
 \end{align}
Notice that this does not depend on the length $\err$.

\begin{lemma}[\rm Quasiparticle evolution]\label{uderivativeu} Let $g$ in $\euG$ and $(q,v,t)$ in $\RR^3$. Then $u_{g,v,t}:\RR\to\RR$ is a diffeomorphism and, for every $(p, \vo )$ in $\RR^2$, the following formulas hold.
  \begin{align}
    u_{g,v,t}(q)
    &= \C_{g,p}(q) + vt + j_{\C_p g}(p,\vo ,t) + \int_{p + \vo t}^{\C_{g,p}(q) + vt}\sigma_{\T_t \C_{p}g}(x)dx\\
    \frac{d}{dq}u_{g,v,t}(q)
    &= \bigl(1-\sigma_g(q)\bigr)\bigl(1+\sigma_{\T_t\C_{p}g}(\C_{g,p}(q)+vt)\bigr)
  \end{align}
\end{lemma}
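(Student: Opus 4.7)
The plan is to reduce everything to computations involving the single fixed density $f := \C_p g$, which does not depend on $q$, by exploiting a shift-covariance of the flow $j_f$.

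\textbf{Step 1: Express $\C_q g$ as a translate of $\C_p g$.} Directly from $\C_{g,a}(y) = y - \int_a^y \sigma_g$, the difference $\C_{g,q}(y) - \C_{g,p}(y) = \int_p^q \sigma_g(z)\,dz$ is independent of $y$. Call this constant $a_q := q - \C_{g,p}(q)$. Then $\C_{g,q}^{-1}(x) = \C_{g,p}^{-1}(x - a_q)$, and inserting this in the definition \eqref{daf1} of $\C_q g$ gives $\C_q g = S_{a_q}\C_p g$.

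\textbf{Step 2: Shift covariance of the flow.} From the explicit formulas \eqref{maflow+}--\eqref{maflow}, the change of variable $z \mapsto z + a$ yields $j_{S_a f}(x,v,t) = j_f(x-a,v,t)$. Applied to $a = a_q$ and $f = \C_p g$ this gives
\begin{align}
j_{\C_q g}(q,v,t) \;=\; j_{\C_p g}\bigl(\C_{g,p}(q),v,t\bigr),
\end{align}
so $u_{g,v,t}(q) = q + vt + j_{\C_p g}(\C_{g,p}(q),v,t)$.

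\textbf{Step 3: Compute the increment of $j_{\C_p g}$ via the macroscopic height.} For $f \in \euF$ define $H_f(t,x)$ as in \eqref{josefa16}. A Fubini/change-of-variable computation, splitting according to $\sign(z)$ and using $\T_t f(z,w,r)=f(z-wt,w,r)$, shows $j_f(x,v,t) = H_f(t,x+vt) - H_f(0,x)$ and
\begin{align}
H_f(t,x) - H_f(t,y) \;=\; \int_y^x \sigma_{\T_t f}(\zeta)\,d\zeta.
\end{align}
Writing $q' := \C_{g,p}(q)$ and subtracting, one gets
\begin{align}
j_{\C_p g}(q',v,t) - j_{\C_p g}(p,\vo,t)
\;=\; \int_{p + \vo t}^{q' + v t}\sigma_{\T_t \C_p g}(\zeta)\,d\zeta \;-\; \int_{p}^{q'}\sigma_{\C_p g}(\zeta)\,d\zeta.
\end{align}

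\textbf{Step 4: Identify the $t=0$ piece and conclude.} Using the change of variable $\zeta = \D_{f,p}(\xi)$ together with the identity $\sigma_{\D_p f}(\D_{f,p}(\xi)) = \sigma_f(\xi)/(1+\sigma_f(\xi))$ (established in the proposition), one gets $\int_p^{q'} \sigma_{\C_p g}(\zeta)\,d\zeta = q - q'$. Substituting and cancelling $q - q'$ against $q$ in Step 2 produces the stated formula. Differentiating in $q$, only $\C_{g,p}(q)$ carries the $q$-dependence and $\tfrac{d}{dq}\C_{g,p}(q) = 1 - \sigma_g(q)$ by \eqref{daf2}, giving
\begin{align}
\tfrac{d}{dq} u_{g,v,t}(q) \;=\; \bigl(1 - \sigma_g(q)\bigr)\bigl(1 + \sigma_{\T_t \C_p g}(\C_{g,p}(q) + vt)\bigr).
\end{align}

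\textbf{Step 5: Diffeomorphism.} Since $g \in \euG$ we have $1 - \sigma_g \geq 1 - \normi{\sigma_g} > 0$, and $\sigma_{\T_t \C_p g}\geq 0$, so the derivative is uniformly bounded below by a positive constant. Combined with $C^1$-regularity (inherited from $f \in \euF$ through dominated convergence applied to $\sigma_{\T_t\C_p g}$), $u_{g,v,t}$ is a $C^1$-diffeomorphism of $\RR$ onto itself by the inverse function theorem plus the bound $u_{g,v,t}(q) \to \pm\infty$ as $q \to \pm\infty$.

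The main obstacle is the identity $H_f(t,x)-H_f(t,y) = \int_y^x \sigma_{\T_t f}$ at \emph{general} $t$: the case $t=0$ is immediate but the general case requires carefully unfolding the characteristic functions in \eqref{josefa16}, distinguishing $\sign(z)$, and reassembling via the change of variable $z' = z + wt$; the $t=0$ boundary contribution from $z=0$ has to cancel. All other steps are algebraic bookkeeping once this identity is in hand.
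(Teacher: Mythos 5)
Your proof is correct and follows essentially the same route as the paper's: both reduce to the fixed density $\C_p g$ via shift covariance, then split the flow at the reference point $(p,\vo)$ into a spatial integral at time $0$ and one at time $t$ using the change of variable $z\mapsto z+wt$. Your detour through the macroscopic height $H_f$ is only a repackaging of the paper's three-way splitting of the inner integral in $j_{\C_p g}$, and the identity $H_f(t,x)-H_f(t,y)=\int_y^x\sigma_{\T_t f}(\zeta)\,d\zeta$ that you flag as the main obstacle is exactly the computation the paper performs for its third piece.
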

\begin{proof} Since $\C_qg(x,v,\err) = \C_{a}g(x-q + \C_{g,a}(q),v,\err)$, a change of variables give us the following alternative formulation. Note that the function we integrate is different and does not depend on $q$.
\begin{align}
   j_{\C_qg}(q,v,t) = \int\err\int\int_{\C_{g,a}(q)}^{\C_{g,a}(q)+(v-\tw )t}\C_{a}g(x,\tw ,\err)dxd\tw d\err. 
\end{align}
Then,
\begin{align*}
  &\hspace{-3mm}u_{g,v,t}(q)
  \coloneqq q + vt + j_{\C_qg}(q,v,t) \\
  &\hspace{-1mm}= q + vt + \int\err\int\int_{\C_{g,p}(q)}^{\C_{g,p}(q)+(v-\tw )t}\C_{p}g(x,\tw ,\err)dxd\tw d\err \\
  &\hspace{-1mm}= q + vt + \int\err\int\Bigl(\int_{\C_{g,p}(q)}^{p}\C_{p}g(x,\tw ,\err)dx + \int_{p}^{p + (\vo -\tw )t}\C_{p}g(x,\tw ,\err)dx  
  \\ 
  &\hspace{47mm}+ \int_{p + (\vo -\tw )t}^{\C_{g,p}(q)+(v-\tw )t}\C_{p}g(x,\tw ,\err)dx\Bigr)d\tw d\err.
\end{align*}
Now, the first integral, together with the term $q$, is equal to $\C_{g,p}(q)$. The second integral is equal to $j_{\C_pg}(p,\vo ,t)$. Finally, by the change of variable $y \coloneqq x + \tw t$, we can see that the third integral is equal to $\int_{p + \vo t}^{\C_{g,p}(q) + vt}\sigma_{\T_t\C_{p}g}(x)dx$. The formula for $u_{g,v,t}$ follows, from which we can deduce the formula for its derivative as it is done below.

Denote $\hat{q}\coloneqq \C_{g,p}(q)$ and compute 
\begin{align*}
    \frac{d}{dq}u_{g,v,t}(q) &= \frac{d\hat{q}}{dq}(q) + \frac{d}{dq}\Bigl(\int_{p + \vo t}^{\hat{q} + vt}\sigma_{\T_t\C_{p}g}(x)dx\Bigr) \\ &= \frac{d\hat{q}}{dq}(q) + \frac{d}{d\hat{q}}\Bigl(\int_{p + \vo t}^{\hat{q} + vt}\sigma_{\T_t\C_{p}g}(x)dx\Bigr) \frac{d\hat{q}}{dq}(q) \\ &= \frac{d\hat{q}}{dq}(q)\Bigl(1 + \frac{d}{d\hat{q}}\Bigl(\int_{p + \vo t}^{\hat{q} + vt}\sigma_{\T_t\C_{p}g}(x)dx\Bigr)\Bigr) \\ &= \bigl(1-\sigma_g(q)\bigr)\bigl(1 + \sigma_{\T_t \C_{p}g}(\hat{q} + vt)\bigr).\tag*{\qedhere}
\end{align*}
\end{proof}

\begin{corollary}[\rm Monotonicity and smoothness] For every $g$ in $\euG$ and every $(v,t)$ in $\RR^2$, the function $u_{g,v,t}$ is increasing and smooth.
\end{corollary}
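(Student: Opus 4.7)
The plan is to read off both claims directly from the derivative formula established in Lemma \ref{uderivativeu}. Fixing any auxiliary point $p\in\RR$, that lemma gives
\begin{align*}
\frac{d}{dq}u_{g,v,t}(q) = \bigl(1-\sigma_g(q)\bigr)\bigl(1+\sigma_{\T_t\C_{p}g}(\C_{g,p}(q)+vt)\bigr).
\end{align*}

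For monotonicity I would check that both factors are strictly positive. Since $g\in\euG$, the definition of $\euG$ yields $\normi{\sigma_g}<1$, so $1-\sigma_g(q)\ge 1-\normi{\sigma_g}>0$ uniformly in $q$. For the second factor, $\C_p g\in\euF$ because $\C_p:\euG\to\euF$, and $\T_t$ preserves $\euF$ as noted immediately after \eqref{ttf22}, so $\T_t\C_p g\in\euF$. The defining condition \eqref{sigma>0} then gives $\sigma_{\T_t\C_p g}(\cdot)>0$, whence $1+\sigma_{\T_t\C_p g}(\cdot)>1$. The derivative is therefore bounded below by the positive constant $1-\normi{\sigma_g}$, so $u_{g,v,t}$ is strictly increasing.

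For smoothness, I would invoke the fact that $\euF$ is defined by requiring $f(\cdot,v,\err)\in C^1(\RR)$ with derivative dominated by an integrable majorant $\gamma\in\euL$. Dominated convergence then yields $\sigma_f\in C^1(\RR)$ for every $f\in\euF$, exactly as noted right after \eqref{zet1}. Applying this to $\T_t\C_p g\in\euF$ shows $\sigma_{\T_t\C_p g}\in C^1$; and writing $g=\D_p\C_p g$ together with the formula \eqref{daf1} for $\D_p$ gives $\sigma_g\in C^1(\RR)$ as well. The contraction $\C_{g,p}$ has derivative $1-\sigma_g$, which is $C^1$, so $q\mapsto \C_{g,p}(q)+vt$ is $C^1$. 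The derivative of $u_{g,v,t}$ is therefore a product and composition of $C^1$ functions, hence itself $C^1$, which places $u_{g,v,t}$ in $C^2(\RR)$ — the natural level of smoothness compatible with the $C^1$ regularity built into $\euF$.

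No step here is technically hard; the only point to watch is the bookkeeping, namely checking that each intermediate object really belongs to $\euF$ (so that both the strict positivity $\sigma>0$ and the $C^1$ bound actually apply) rather than to the larger ambient class. Once that is verified the corollary follows at once from Lemma \ref{uderivativeu}.
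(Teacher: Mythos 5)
Your argument is correct and matches the paper's (implicit) reasoning: the corollary is stated without its own proof precisely because both claims are read off the derivative formula of Lemma~\ref{uderivativeu}, exactly as you do — the first factor is positive since $0<\sigma_g<1$ for $g\in\euG$, and the second since $\T_t\C_p g\in\euF$ forces $\sigma_{\T_t\C_p g}>0$. Your remark that the $C^1$ hypotheses built into $\euF$ only deliver $u_{g,v,t}\in C^2$ rather than $C^\infty$ is a fair and honest reading of what ``smooth'' can mean here given the paper's regularity assumptions.
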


We still need some previous lemmas before facing the proof of the existence and uniqueness theorem for the hydrodynamic equation. The following one is the macroscopic counterpart of \eqref{hh12}. 

\begin{lemma}
  [Evolution formula]\label{lemevol} Let $g$ in $\euG$. Then, for $p,q,v,\vo,t\in\RR$,  
\begin{align}
  g (u_{g,v,t}^{-1}(q),v,\err)\frac{d}{dq}u_{g,v,t}^{-1}(q)
  &= S_a \D_{p+\vo t}\T_t\C_{p}g(q,v,\err),\label{difgt}
\end{align}
where $a = j_{\C_pg}(p,\vo,t)$.
\end{lemma}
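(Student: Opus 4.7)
The plan is to evaluate both sides of \eqref{difgt} at a fixed triple $(q,v,\err)\in\RR^3$ and show that they reduce to the same closed-form expression in terms of $\tilde q := u_{g,v,t}^{-1}(q)$. The core identity driving the argument is a rewriting of $u_{g,v,t}(\tilde q)-a$ as a dilation function evaluated at $\C_{g,p}(\tilde q)+vt$; once this is established, the four nested operators $S_a$, $\D_{p+\vo t}$, $\T_t$, $\C_{p}$ on the right-hand side collapse after a single substitution, and the derivative factor on the left-hand side is provided directly by Lemma \ref{uderivativeu}.

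First I would apply the explicit formula for $u_{g,v,t}$ given in Lemma \ref{uderivativeu} with base pair $(p,\vo)$. Since $a = j_{\C_p g}(p,\vo,t)$, the relation $q=u_{g,v,t}(\tilde q)$ rearranges to
\[
q-a \;=\; \C_{g,p}(\tilde q)+vt + \int_{p+\vo t}^{\C_{g,p}(\tilde q)+vt}\sigma_{\T_t\C_p g}(x)\,dx \;=\; \D_{\T_t\C_p g,\,p+\vo t}\bigl(\C_{g,p}(\tilde q)+vt\bigr),
\]
so that $\D_{\T_t\C_p g,\,p+\vo t}^{-1}(q-a)=\C_{g,p}(\tilde q)+vt$. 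Unfolding the right-hand side of \eqref{difgt}: by the definition of the shift, $S_a\D_{p+\vo t}\T_t\C_p g(q,v,\err) = \D_{p+\vo t}\T_t\C_p g(q-a,v,\err)$; applying \eqref{daf1} to the function $\T_t\C_p g$ at base point $p+\vo t$ and substituting the key identity for the inner argument yields
\[
\D_{p+\vo t}\T_t\C_p g(q-a,v,\err) \;=\; \frac{\T_t\C_p g\bigl(\C_{g,p}(\tilde q)+vt,\,v,\,\err\bigr)}{1+\sigma_{\T_t\C_p g}\bigl(\C_{g,p}(\tilde q)+vt\bigr)}.
\]
Finally, by \eqref{ttf22} and \eqref{daf1}, $\T_t\C_p g\bigl(\C_{g,p}(\tilde q)+vt,v,\err\bigr) = \C_p g\bigl(\C_{g,p}(\tilde q),v,\err\bigr) = g(\tilde q,v,\err)/(1-\sigma_g(\tilde q))$, so the right-hand side collapses to
\[
\frac{g(\tilde q,v,\err)}{\bigl(1-\sigma_g(\tilde q)\bigr)\bigl(1+\sigma_{\T_t\C_p g}(\C_{g,p}(\tilde q)+vt)\bigr)}.
\]

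To match the left-hand side of \eqref{difgt} I would invoke Lemma \ref{uderivativeu} once more, this time for the derivative: $\tfrac{d}{d\tilde q}u_{g,v,t}(\tilde q) = \bigl(1-\sigma_g(\tilde q)\bigr)\bigl(1+\sigma_{\T_t\C_p g}(\C_{g,p}(\tilde q)+vt)\bigr)$, so by the inverse-function theorem $\tfrac{d}{dq}u_{g,v,t}^{-1}(q)$ is exactly the reciprocal of this product, and the left-hand side of \eqref{difgt} is visibly the same closed form. The only real obstacle is bookkeeping: one must keep the roles of $v$ (the velocity of the tagged rod, entering only through the inner shift $vt$) and $\vo$ (the velocity fixing the base point $p+\vo t$ of the outer dilation) carefully separated, and then observe that the two Jacobian factors produced respectively by $\C_p$ (the $1-\sigma_g$ factor) and by $\D_{p+\vo t}$ (the $1+\sigma_{\T_t\C_p g}$ factor) are precisely the two factors appearing in the derivative of $u_{g,v,t}$.
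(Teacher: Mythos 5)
Your proposal is correct and follows essentially the same route as the paper: both arguments hinge on the identity $\D_{\T_t\C_p g,\,p+\vo t}\bigl(\C_{g,p}(\tilde q)+vt\bigr)=q-a$ and then collapse the operator chain via \eqref{daf1}, \eqref{ttf22} and the derivative formula of Lemma \ref{uderivativeu}. The only (harmless, arguably cleaner) difference is that you read this key identity directly off the explicit formula for $u_{g,v,t}$ in Lemma \ref{uderivativeu}, whereas the paper verifies it at the single point $q=u(p)$ and then matches $q$-derivatives.
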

\begin{proof} Let us write $f\coloneqq \C_{p}g$, $\hat{q}\coloneqq \C_{g,p}\bigl(u_{g,v,t}^{-1}(q)\bigr)$ and $u=u_{g,v,t}$.
By Lemma \ref{uderivativeu}, we have that
\begin{align}\label{trefoil}
\begin{split}
    \frac{d}{dq}u^{-1}(q) &= \Bigl(\frac{d}{dx}u(x)\big{|}_{x=u^{-1}(q)}\Bigr)^{-1}  \\ &
    = \Bigl(1-\sigma_{g}\bigl(u^{-1}(q)\bigr)\Bigr)^{-1}\Bigl(1+\sigma_{\T_tf}(\hat{q}+vt)\Bigr)^{-1}.
\end{split}
\end{align}
We claim that
\begin{align}\label{diamond}
    \D_{\T_tf,p+\vo t}(\hat{q}+vt) = q-a.
\end{align}
Indeed, first we see that, by the involved definitions,
\begin{align*}
    j_f(p,v,t)-j_f(p,\vo,t) &= \int_0^{+\infty}\err\int_{-\infty}^{+\infty}\int_{p+(\vo-\tw )t}^{p+(v-\tw )t}f(x,\tw ,\err) dxd\tw d\err \\ &= \int_{p+\vo t}^{p+vt}\sigma_{\T_tf}(x)dx = \D_{\T_tf,p+\vo t}(p+vt) - (p + vt).
\end{align*}
From the last equation, together with Lemma \ref{uderivativeu}, we see that \eqref{diamond} holds for $q = u(p)$. Second, taking the derivative w.r.t.~$q$ on \eqref{diamond}, applying the chain rule and using \eqref{trefoil}, we get
\begin{gather*}
  \frac{d}{dq}\D_{\T_tf,p+\vo t}(\hat{q}+vt) 
  = \Bigl(\frac{d}{dq}\D_{\T_tf,p+\vo t}(q)\Bigr)\Big{|}_{q=\hat{q}+vt}\frac{d\hat{q}}{dq}(q)
  \\ 
  = \Bigl(1+\sigma_{\T_tf}(\hat{q}+vt)\Bigr)\Bigl(\frac{d}{dq}\C_{g,p}(q)\Bigr)\Big{|}_{q=u_{g,v,t}^{-1}(q)}\frac{d}{dq}u_{g,v,t}^{-1}(q)
  \\ 
  = \Bigl(1+\sigma_{\T_tf}(\hat{q}+vt)\Bigr)\Bigl(1-\sigma_{g}\bigl(u_{g,v,t}^{-1}(q)\bigr)\Bigr)\frac{d}{dq}u_{g,v,t}^{-1}(q)
  = 1= \frac{d}{dq}(q-a).
\end{gather*}
Thus equation \eqref{diamond} holds, as claimed. 
Finally, by definition of $\hat{q}$, we have that $u^{-1}(q) = \C_{g,p}^{-1}(\hat{q})$. Using this identity, together with equations \eqref{trefoil} and \eqref{diamond}, we get
\begin{align*}
  g (u^{-1}(q),v,\err)\frac{d}{dq}u^{-1}(q) &= \frac{g\bigl(\C_{g,p}^{-1}(\hat{q}),v,\err)\bigr)}{\bigl(1-\sigma_g(\C_{g,p}^{-1}(\hat{q}))\bigr)\bigl(1+\sigma_{\T_tf}(\hat{q}+vt)\bigr)}
  \\ &= \frac{f(\hat{q},v,\err)}{(1 + \sigma_{\T_tf}(\hat{q}+vt))}
  \\ &= \frac{\T_tf(\hat{q} + vt,v,\err)}{(1 + \sigma_{\T_tf}(\hat{q}+vt))}
  \\ &= \D_{p+\vo t} \T_tf (\D_{\T_tf,p+\vo t}(\hat{q}+vt), v, \err)
  \\ &= \D_{p+\vo t} \T_tf (q-a, v, \err)
  \\ &= S_a \D_{p+\vo t} \T_tf (q, v, \err)
  \\ &= S_a \D_{p+\vo t} \T_t \C_{p}g (q, v, \err).\tag*{\qedhere}
\end{align*}
\end{proof}
\begin{corollary}[\rm Alternative evolution formula] We have
	\begin{align}
	\U_tg (q,v,\err) = g (u_{g,v,t}^{-1}(q),v,\err)\frac{d}{dq}u_{g,v,t}^{-1}(q).
	\end{align}
\end{corollary}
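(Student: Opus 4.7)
The corollary is an immediate specialization of the preceding Lemma \ref{lemevol} (Evolution formula). My plan is simply to substitute $p=0$ and $\vo=0$ into the identity
\begin{align*}
g(u_{g,v,t}^{-1}(q),v,\err)\frac{d}{dq}u_{g,v,t}^{-1}(q) = S_a \D_{p+\vo t}\T_t\C_{p}g(q,v,\err),
\end{align*}
where $a = j_{\C_p g}(p,\vo,t)$, and observe that the right-hand side collapses to the definition of $\U_t g$.

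Concretely, with $p=0$ and $\vo=0$ we have $p + \vo t = 0$, so $\D_{p+\vo t} = \D_0$. Moreover $a = j_{\C_0 g}(0,0,t)$, which is exactly $\tto_t$ by \eqref{otg2}. Plugging these in, the right-hand side of \eqref{difgt} becomes
\begin{align*}
S_{\tto_t}\D_0 \T_t \C_0 g(q,v,\err),
\end{align*}
which is precisely $\U_t g(q,v,\err)$ by the definition \eqref{hre1}. Comparing with the left-hand side of \eqref{difgt} yields the claim.

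There is no real obstacle here; all the work has been done in Lemma \ref{lemevol}. The only thing worth double-checking is that the choice $p=0$, $\vo=0$ is admissible in that lemma (which quantifies over all $(p,\vo)\in\RR^2$) and that the hypothesis $g\in\euG$ is the same in both statements, which it is. Thus the corollary is just the $p=\vo=0$ instance of the evolution formula, reconciling the two equivalent descriptions of $\U_t g$: one via the operator composition $S_{\tto_t}\D_0 \T_t \C_0$, and the other via pushing forward the density $g$ along the quasiparticle map $u_{g,v,t}$.
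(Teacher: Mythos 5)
Your proof is correct and is exactly the intended argument: the paper states the corollary without proof immediately after Lemma \ref{lemevol}, and the specialization $p=\vo=0$, which turns $\D_{p+\vo t}$ into $\D_0$ and $a=j_{\C_0 g}(0,0,t)$ into $\tto_t$ as in \eqref{otg2}, recovers the definition \eqref{hre1} of $\U_t g$ on the right-hand side. Nothing is missing.
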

 \begin{lemma}[\rm Group property]\label{grouproperty}
 The family $(\U_t)_{t\in\RR}$ is a group: $U_0=$ Identity and
   $ \U_{t+s}= \U_t\U_s$, for $t,s\in\RR$.
 
%
\end{lemma}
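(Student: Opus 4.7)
The proof splits naturally into verifying $\U_0 = \text{Identity}$ and establishing the semigroup law $\U_{t+s} = \U_t \U_s$. The first is immediate: $\T_0$ is the identity on $\euF$, the flow at time zero vanishes so $\tto_0 = j_{\C_0 g}(0,0,0) = 0$, and $\D_0 \C_0 g = g$ since $\D_0$ and $\C_0$ are inverse operators; hence $\U_0 g = S_0 \D_0 \C_0 g = g$.

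My plan for the semigroup law is to leverage the alternative evolution formula (the corollary to Lemma \ref{lemevol}),
\begin{align*}
  \U_t g(q, v, \err) = g\bigl(u_{g, v, t}^{-1}(q), v, \err\bigr)\, \tfrac{d}{dq} u_{g, v, t}^{-1}(q),
\end{align*}
which identifies $\U_t g$ with the pushforward of $g$ along the quasiparticle map $u_{g, v, t}$. The whole statement then reduces to the \emph{quasiparticle semigroup identity}
\begin{align*}
  u_{g, v, t+s}(q) \;=\; u_{\U_s g, v, t}\bigl(u_{g, v, s}(q)\bigr), \qquad q, v, t, s \in \RR.
\end{align*}
Indeed, once this holds, taking inverses gives $u_{g, v, t+s}^{-1} = u_{g, v, s}^{-1} \circ u_{\U_s g, v, t}^{-1}$, and applying the evolution formula twice together with the chain rule yields $\U_t \U_s g = \U_{t+s} g$.

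To prove the quasiparticle semigroup identity, I would apply Lemma \ref{lemevol} with the free parameters chosen as $(\po, \vo) = (q, v)$, which produces the representation
\begin{align*}
  \U_s g = S_{a_s}\, \D_{q + vs}\, \T_s\, \C_q g, \qquad a_s := j_{\C_q g}(q, v, s).
\end{align*}
Setting $q_s := u_{g, v, s}(q) = q + vs + a_s$, the crucial observation is that $q_s$ has exactly the form $x + z$ of Remark \ref{remarkk} with $x = q + vs$ and $z = a_s$, so that remark collapses
\begin{align*}
  \C_{q_s}\, \U_s g \;=\; \C_{q_s}\, S_{a_s}\, \D_{q + vs}\, \T_s\, \C_q g \;=\; S_{a_s}\, \T_s\, \C_q g.
\end{align*}
A change of variable in \eqref{maflow+}--\eqref{maflow} then gives $j_{\C_{q_s} \U_s g}(q_s, v, t) = j_{\T_s \C_q g}(q + vs, v, t)$, and the time-additivity of the ideal gas flow,
\begin{align*}
  j_f(x, v, t+s) - j_f(x, v, s) = j_{\T_s f}(x + vs, v, t),
\end{align*}
which follows directly from the ideal gas group property $\T_{t+s} = \T_t \T_s$, combines with the previous identities to give
\begin{align*}
  u_{\U_s g, v, t}(q_s) = q_s + vt + j_{\T_s \C_q g}(q + vs, v, t) = q + v(t+s) + j_{\C_q g}(q, v, t+s) = u_{g, v, t+s}(q).
\end{align*}

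The main obstacle is the bookkeeping step: the origin-anchored definition $\U_s g = S_{\tto_s} \D_0 \T_s \C_0 g$ must be replaced by the $q$-anchored representation above supplied by Lemma \ref{lemevol}, since only with the latter does the contraction $\C_{q_s}$ cancel the shift and dilation cleanly via Remark \ref{remarkk}. Once this reparametrization is chosen, the remainder is routine manipulation of the flow integrals.
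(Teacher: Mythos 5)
Your proof is correct, and it rests on the same two ingredients as the paper's --- the re-anchored factorization of Lemma \ref{lemevol} and the collapsing identity $\C_{x+z}S_z\D_x=S_z$ of Remark \ref{remarkk} --- but it is organized differently. The paper composes the two operator factorizations $S_b\D\T\C$ and $S_a\D\T\C$ directly and cancels the middle $\C S\D$ block, which requires the flow-additivity claim $c=a+b$ (in the paper's notation $j_{\C_pg}(p,\vo,t+s)=j_{\C_pg}(p,\vo,t)+j_{\C_{p+\vo t+a}\U_tg}(p+\vo t+a,\vo,s)$); the paper asserts this ``can be proved'' without giving the computation. You instead descend to the trajectory level, prove the quasiparticle semigroup identity $u_{g,v,t+s}=u_{\U_sg,v,t}\circ u_{g,v,s}$ --- which is exactly the content of that omitted claim, and you do supply its proof via shift-invariance and time-additivity of the ideal-gas flow $j_f$ --- and then transfer the statement back to densities through the pushforward corollary and the chain rule. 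What your route buys is an explicit verification of the step the paper skips, plus the trajectory identity itself, which is of independent interest (it is the Lagrangian counterpart of the group law and connects to \eqref{cauchy-u}); what it costs is the extra layer of inverting $u_{g,v,t}$ and invoking the corollary, where the paper's operator computation concludes in one chain of equalities. Both arguments implicitly need $\U_sg\in\euG$ so that Lemma \ref{lemevol} applies to it; that is guaranteed by the definition \eqref{hre1} of $\U_t:\euG\to\euG$, but it is worth stating.
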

\begin{proof} It is clear that $\U_0 g = g$. 
Let $t$ and $s$ in $\RR$. We want to prove that $\U_{s}\U_{t}g = \U_{t+s}g$. Let us choose $(p, \vo)$ in $\RR^2$ arbitrarily. Let $a \coloneqq j_{\C_p {g}}(p,\vo,t)$, $b \coloneqq j_{{\C_{p + \vo t + a}}\U_{t}g}(p + \vo t + a, \vo, s)$ and $c \coloneqq j_{ {\C_p} g}(p,\vo,t+s)$. From their definitions, it can be proved that $c = a + b$. Now, we compute
\begin{align*}
  \U_{s}\U_{t}g &= \bigl(S_{b}\D_{p + \vo(t+s)+a}\T_{s}\C_{p + \vo {t}  + a}\bigr)\bigl(\U_{t}g\bigr)
                      \tag*{\text{(by Lemma \ref{lemevol})}}
  \\ &= \bigl(S_{b}\D_{p + \vo(t+s)+a}\T_{s}\C_{p + \vo {t} + a}\bigr)
       \bigl(S_{a}\D_{p + \vo t}\T_{t}\C_{p}g\bigr)\tag*{\text{(by Lemma \ref{lemevol})}}
    \\ &= S_{b}\D_{p + \vo(t+s)+a}\T_{s}\bigl(\C_{p + \vo {t}  + a}S_{a}\D_{p + \vo t}\bigr)\T_{t}\C_{p}g
  \\ &= S_{b}\D_{p + \vo(t+s)+a}\bigl(\T_{s}S_{a}\bigr)\T_{t}\C_{p}g
       \tag*{\text{( {by Remark \ref{remarkk}})}}
  \\ &= S_{b}\bigl(\D_{p + \vo(t+s)+a}S_{a}\bigr)\bigl(\T_{s}\T_{t}\bigr)\C_{p}g
       \tag*{\text{($S$ and $T$ commute)}}
  \\ &= \bigl(S_{b}S_{a}\bigr)\D_{p + \vo(t+s)}\T_{t+s}\C_{p}g
       \tag*{\text{(  {by Remark \ref{remarkk}})}}
    \\ &= \bigl(S_{a + b}\bigr)\D_{p + \vo(t+s)}\T_{t+s}\C_{p}g 
    \\ &= \bigl(S_{a+b}\D_{p + \vo(t+s)}\T_{t+s}\C_{p}g\bigr)
    \\ &= \U_{t+s}g. \tag*{\text{(by Lemma \ref{lemevol})}\quad\qedhere}
\end{align*}
\end{proof}

The next lemma shows the equivalence between the definitions \eqref{gtf9} and \eqref{hre1} of $g_t$.
\begin{lemma}[\rm Equivalence]
  \label{lmuf1}
    Let $f\in\euF$, $g:=\D_0f$, $g_t:= \cS_{o_t}\D_0\T_t\C_0g$ and $y_{f,v,t}(x):= \D_{f,0}(x)+vt+j_f(x,v,t)$. Then,
      \begin{align}
       &\iiint dxdvd\err\, f(x,v,\err)\,\err\, \varphi(y_{f,v,t}(x),v,\err)\\
        &\qquad=\iiint dqdvd\err\, g_t(q,v,\err)\,\err\, \varphi(q,v,\err). \label{mug}
      \end{align}
    \end{lemma}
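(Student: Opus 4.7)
The plan is to reduce the identity to a change of variables $q = y_{f,v,t}(x)$ in the integral on the left-hand side, using the alternative evolution formula to identify $g_t$ with the pushforward of $f$ under $y_{f,v,t}$.

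First I would observe that $g_t = \U_t g$: comparing the definition $g_t = \cS_{o_t}\D_0 \T_t \C_0 g$ with \eqref{hre1} and using that $\C_0 g = \C_0 \D_0 f = f$, the quantity $o_t$ coincides with $\tto_t = j_{\C_0 g}(0,0,t)$. By the Alternative Evolution Formula (corollary to Lemma \ref{lemevol}) this gives
\begin{align*}
  g_t(q,v,\err) = g\bigl(u_{g,v,t}^{-1}(q),v,\err\bigr)\,\tfrac{d}{dq}u_{g,v,t}^{-1}(q).
\end{align*}

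Next, I would establish the key identity $y_{f,v,t}(x) = u_{g,v,t}(\D_{f,0}(x))$. Since $g = \D_0 f$, statement 1 of the Proposition gives $\C_{g,0} = \D_{f,0}^{-1}$, so with $q = \D_{f,0}(x)$ we have $\C_{g,0}(q) = x$. Using the alternative formula for the flow from the beginning of the proof of Lemma \ref{uderivativeu},
\begin{align*}
j_{\C_q g}(q,v,t) = \iiint \err\,\one_{[x,\,x+(v-w)t]}(z)\,f(z,w,\err)\,dz\,dw\,d\err,
\end{align*}
and checking against \eqref{maflow+}--\eqref{maflow} (where the minus sign from the flipped integration limits in $j_f^-$ precisely cancels the sign difference) yields $j_{\C_q g}(q,v,t) = j_f(x,v,t)$. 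Therefore $u_{g,v,t}(\D_{f,0}(x)) = \D_{f,0}(x) + vt + j_f(x,v,t) = y_{f,v,t}(x)$, as claimed.

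From this identity and the chain rule, $y_{f,v,t}'(x) = u_{g,v,t}'(\D_{f,0}(x))\,(1+\sigma_f(x))$, so combining with $g = \D_0 f$, i.e.\ $g(\D_{f,0}(x),v,\err) = f(x,v,\err)/(1+\sigma_f(x))$, I obtain
\begin{align*}
  g_t(y_{f,v,t}(x),v,\err) \,y_{f,v,t}'(x) = f(x,v,\err).
\end{align*}
In particular $y_{f,v,t}$ inherits strict monotonicity and smoothness from $u_{g,v,t}$ (by the Monotonicity Corollary) and $\D_{f,0}$, so it is a diffeomorphism of $\RR$. Performing the change of variables $q = y_{f,v,t}(x)$ in the left-hand side of \eqref{mug} then yields exactly the right-hand side.

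The one step requiring care is the equality $j_{\C_q g}(q,v,t) = j_f(x,v,t)$ with $x = \C_{g,0}(q)$, which I expect to be the main obstacle in terms of sign bookkeeping between the signed definition \eqref{maflow} of $j_f$ and the unified oriented-integral expression used in the proof of Lemma \ref{uderivativeu}; everything else is change of variables and the already-established group/evolution machinery.
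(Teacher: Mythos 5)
Your proposal is correct and follows essentially the same route as the paper: the change of variables $q=y_{f,v,t}(x)$ combined with the identity $y_{f,v,t}=u_{g,v,t}\circ\D_{f,0}$, the definition \eqref{daf1} of $\D_0 f$, and the evolution formula \eqref{difgt}. The only difference is that you explicitly verify $j_{\C_q g}(q,v,t)=j_f(x,v,t)$ (and hence $y_{f,v,t}=u_{g,v,t}\circ\D_{f,0}$), a step the paper simply recalls; your sign bookkeeping there is right, since $j_f^+-j_f^-$ is exactly the oriented integral $\int_x^{x+(v-w)t}$ over all $w$.
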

    \begin{proof}
Lighten notation by writing $y_{v,t}= y_{f,v,t}$ and $u_{v,t}=u_{g,v,t}$, and recall $y_{v,t}(x)= u_{v,t}(D_0(x))$. Changing variables $x=y^{-1}_{v,t}(q)$, we have
\begin{gather*}
  dx = \frac{d}{dq} y^{-1}_{v,t}(q)\, {dq} = \frac{d}{dq} \C_{g,0}(u^{-1}_{v,t}(q))\, dq
  =
    (1-\sigma_g(u^{-1}_{v,t}(q)) \,\frac{d}{dq} u^{-1}_{v,t}(q)\, dq,
\end{gather*} 
by \eqref{daf2}, and
    \begin{align*}
      &\hspace{-2mm} \iiint dxdvd\err\, f(x,v,\err)\,\err\, \varphi(y_{v,t}(x),v,\err)\\
        &=\iiint d{q}dvd\err\, f\bigl(y^{-1}_{v,t}({q}),v,\err\bigr)\,\err\, \varphi({q},v,\err)\,
          (1-\sigma_g(u^{-1}_{v,t}({q})) \,\frac{d}{d{q}} u^{-1}_{v,t}({q})\\
        &=\iiint d{q}dvd\err\, f\bigl(\C_{g,0}u^{-1}_{v,t}({q}),v,\err\bigr)\,\err\, \varphi({q},v,\err)\,
          (1-\sigma_g(u^{-1}_{v,t}({q})) \,\frac{d}{d{q}} u^{-1}_{v,t}({q})\\
        &=\iiint d{q}dvd\err\, g\bigl(u^{-1}_{v,t}({q}),v,\err\bigr)\,\err\, \varphi({q},v,\err)\,\frac{d}{d{q}} u^{-1}_{v,t}({q}),\qquad \text{by \eqref{daf1}}\\
        &=\iiint d{q}dvd\err\, g_t\bigl({q},v,\err\bigr)\,\err\, \varphi({q},v,\err),\qquad \text{by \eqref{difgt}.}
    \end{align*}
    In the second identity we used that \eqref{josefa13} implies
$y^{-1}_{v,t}({q}) = \C_{g,0}u^{-1}_{v,t}({q})$.
  Indeed,  $y_{v,t}(x)=  u_{v,t}(D_{f,0}(x))$, where $u_{v,t}=u_{g,v,t}$ is defined in \eqref{ugt}. 
\end{proof}

\paragraph{Proof of the density evolution theorem}\penalty 10000
Recall the definition of $\sigma_f$ and $\zeta_f$ in \eqref{zet1} and denote
\begin{gather}
  \label{sigmaxt}
  \sigma_g(q,t)= \sigma_{g_t}(q)\quad 
  \zeta_g(q,t)= \sigma_{g_t}(q).
\end{gather}
\begin{proof}[Proof of Theorem \ref{cauchy-theorem}]
We first prove that {$g_t=\U_tg$ satisfies the evolution equation}. It is clear that $g$ satisfies the initial condition. Now, by the group property of Lemma \ref{grouproperty}, it suffices to verify the differential equation at time $t=0$. More precisely, for the general case $t=t_0$ we could satisfy the differential equation for $t=0$ and the initial condition $\U_{t_0}\tg$, using the function $\U_t\U_{s}\tg = \U_{t+s}\tg$. Then simply by shifting its time dependence we would get that $\U_tg$ satisfies the differential equation for $t=s$.
	
Let us write $u(q,v,t)\coloneqq u_{\tg,v,t}(q)$ and $\hat{u}(q,v,t)\coloneqq u_{\tg,v,t}^{-1}(q) \eqqcolon \hy$. Now, first we compute, using the chain rule,
	\begin{align*}
	1 &= \partial_q(q)
             = \partial_q\bigl(u(\hat{u}(q,v,t),v,t)\bigr)
             = \bigl(\partial_qu(\hy ,v,t)\bigr) \bigl(\partial_q\hat{u}(q,v,t)\bigr).
	\end{align*}
	Then $\partial_q\hat{u}(q,v,t) = \bigl(\partial_qu(\hy ,v,t)\bigr)^{-1}$. Next,
	\begin{align*}
	0 &= \partial_t(q)
             = \partial_t\bigl(u(\hat{u}(q,v,t),v,t)\bigr)
             = \bigl(\partial_qu(\hy ,v,t)\bigr) \bigl(\partial_t\hat{u}(q,v,t)\bigr) + \partial_tu(\hy ,v,t).
	\end{align*}
	Then $\partial_t\hat{u}(q,v,t)=-\frac{\partial_tu(\hy ,v,t)}{\partial_qu(\hy ,v,t)}$, whenever $\partial_qu(\hy ,v,t)\neq0$, which is always the case.
Thus, evaluating at $t=0$, we get
	\begin{align*}
          \partial_q \hat{u}(q,v,t)\big{|}_{t=0}
          &= 1
          \\
          \partial_t \hat{u}(q,v,t)\big{|}_{t=0}
          &= -v -\bigl(1-\sigma_{\tg}(q)\bigr)^{-1}\int\terr\int(v-w)\tg(q,w,\terr)dwd\terr
          \\
          \partial_q\partial_t \hat{u}(q,v,t)\big{|}_{t=0}
          &= -\partial_q\Bigl(\bigl(1-\sigma_{\tg}(q)\bigr)^{-1}
            \int\terr\int(v-w)\tg(q,w,\terr)dwd\terr\Bigr).
	\end{align*}
	Then, for the time derivative of the proposed solution $g$ at time $t=0$, we finally get
	\begin{align*}
          \partial_tg_t
          &(q,v,\err)\big{|}_{t=0}
            = \Bigl(\partial_t\bigl(\tg(\hy ,v,\err) \partial_q\hat{u}(q,v,t)\bigr)\Bigr)\Big{|}_{t=0} 
          \\
          &= \Bigl(\partial_q\tg(\hy,v,\err)\partial_t\hat{u}(q,v,t)\partial_q\hat{u}(q,v,t)\Bigr)\Big{|}_{t=0}
            \tg(\hy ,v,\err)\Bigl(\partial_t\partial_q\hat{u}(q,v,t)\Bigr)\Big{|}_{t=0}
          \\
          &= \partial_q\tg(\hy ,v,\err)\Bigl(-v - \bigl(1-\sigma_{\tg}(q)\bigr)^{-1}
            \int\terr\int(v-w)\tg(q,w,\terr)dwd\terr\bigr)\Bigr)
          \\
          &\quad+ \tg(\hy ,v,\err)\Bigl(-\partial_q\Bigl(\bigl(1-\sigma_{\tg}(q)\bigr)^{-1}
            \int\terr\int(v-w)\tg(q,w,\terr)dwd\terr\Bigr)\Bigr)
          \\
          &= -v\partial_q\tg(\hy ,v,\err) -\partial_q \Bigl(\tg(\hy ,v,\err)\bigl(1-\sigma_{\tg}(q)\bigr)^{-1}
          \\ &\qquad\qquad\qquad\qquad\qquad\qquad
               \times 
               \int\terr\int(v-w)\tg(q,w,\terr)dwd\terr\Bigr)
          \\ &= -v\partial_qg_t(q,v,\err)\Big{|}_{t=0} -\partial_q \Bigl(g_t(\hy ,v,\err)\bigl(1-\sigma_g(q,t)\bigr)^{-1}
          \\ &\qquad \qquad\qquad\qquad\qquad\qquad
               \times\int\terr\int(v-w)g_t(q,w,\terr)dwd\terr\Bigr)\Big{|}_{t=0},
	\end{align*}
	which is precisely the differential equation, at time $t=0$, of our Cauchy problem \eqref{cauchy-g}.
	
        Now we prove uniqueness. 
     Let us suppose that $h_t$ is another solution to the Cauchy problem.    First, let us note that (first multiplying by $\err$ and then) integrating (w.r.t. $\err$ and $v$) the differential equation of the Cauchy problem \eqref{cauchy-g}, we get the following integral version
	\begin{align}\label{conteq}
	\partial_t\sigma_h(q,t)+\partial_q\zeta_h(q,t) = 0,
	\end{align}
	which is a continuity equation, expressing the conservation of mass law, and will be used later. Secondly, let us define the evolving effective velocity field associated to $h$ as the function
 	\[v_h^{\eff}(q,v,t) := v + \frac{\int\terr\int(v-w)h(q,w,\terr,t)dwd\terr}{1-\sigma_h(q,t)} \in \RR.\]
	Note that we have the following equivalent formulation,
	\begin{align}\label{veffeq}
	v_h^{\eff}(q,v,t) = \frac{v - \zeta_h(q,t)}{1-\sigma_h(q,t)}.
	\end{align}
 which, in particular gives
\[v^{\eff}(y,v,t)-v^{\eff}(y,w,t) = \frac{v-w}{1-\sigma(y,t)}. \]

	Now we can consider the following Cauchy problem
	\begin{align}\label{auxedo}
	\begin{cases}
	&\dot{q}_{h,q,v} = v_h^{\eff}(q_{h,q,v}(t),v,t)
	\\ &q_{h,q,v}(0) = q
	\end{cases}
	\end{align}
	Since $h$ is in $\euG$, for every $v$ in $\RR$ we have that $v_h^{\eff}(\cdot,v,\cdot)$ and $\partial_qv_h^{\eff}(\cdot,v,\cdot)$ are in $\C^2(\RR^2)$. Then, by the general theory of ODEs, for each $(q,v)$ in $\RR^2$, there exists a global solution to the problem \eqref{auxedo}. Let us define
	\[
          u_{h,v,t}(q):= q_{h,q,v}(t),
        \]
	which is a diffeomorphism. The rest of the proof consists of showing, for every $(q,v,\err,t)$ in $\RR^3\times\RR_{\geq0}$, that the following two formulas hold.
	\begin{align}\label{finalstep}
	\begin{cases}
	&h(q,v,\err,t) = \tg(u_{h,v,t}^{-1}(q),v,\err)\frac{d}{dq}u_{h,v,t}^{-1}(q),
	\\ &u_{h,v,t}=u_{\tg,v,t}
	\end{cases}
	\end{align}
	which gives uniqueness. We split the proof in three parts.\\
	
	\underline{Part} 1: We claim that, for all $p,\vo,q,v$ in $\RR$, the following holds.
	\begin{align}\label{auxeq1}
	\C_{h_t,p(t)}(q(t)) - \C_{\tg,p}(q) + (\vo-v)t - p(t) + p = 0,
	\end{align}
	where $h_t \coloneqq h(\cdot,\cdot,t,\cdot)$, $p(t)\coloneqq q_{h,p,\vo}(t)$ and $q(t)\coloneqq q_{h,q,v}(t)$ for every $t$ in $\RR$. This equation clearly holds for $t=0$ and, applying the Leibniz integral rule to compute its time derivative, and taking into account the continuity equation \eqref{conteq} and identity \eqref{veffeq}, we get the following result.
\begin{align*}
&	\frac{d}{dt}\Bigl(\C_{h_t,p(t)}(q(t)) - \C_{\tg,p}(q) + (\vo-v)t - p(t) + p\Bigr) 
          \\ &= \frac{d}{dt}\Bigl(\int_{p}^q\sigma_{h}(x)dx - \int_{p(t)}^{q(t)}\sigma_h(x,t)dx
               + (\vo-v)t + q(t)- p(t) + p-q\Bigr) 
          \\ &= \dot{p}(t)\sigma_h(p(t),t) - \dot{q}(t)\sigma_h(q(t),t)
               - \int_{p(t)}^{q(t)}\partial_t\sigma_h(x,t)dx +\vo-v + \dot{q}(t)-\dot{p}(t)
  \\ &= \int_{p(t)}^{q(t)}\partial_x\zeta_h(x,t)dx + \vo-v + \dot{q}(t)\bigl(1-\sigma_h(q(t),t)\bigr)
               - \dot{p}(t)\bigl(1-\sigma_h(p(t),t)\bigr) 
  	\\[2mm] &= \zeta_h(q(t),t)-\zeta_h(p(t),t) + \vo-v 
          \\ &\qquad+\enspace v_h^{\eff}(q(t),v,t)\bigl(1-\sigma_h(q(t),t)\bigr)
               - v_h^{\eff}(p(t),\vo,t)\bigl(1-\sigma_h(p(t),t)\bigr)
	\\[2mm] &= \zeta_h(q(t),t)-\zeta_h(p(t),t) + \vo-v
	\\ &\qquad+\enspace \frac{v - \zeta_h(q(t),t)}{1-\sigma_h(q(t),t)}\bigl(1-\sigma_h(q(t),t)\bigr) - \frac{\vo - \zeta_h(p(t),t)}{1-\sigma_h(p(t),t)}\bigl(1-\sigma_h(p(t),t)\bigr)
          \\[2mm] &= \zeta_h(q(t),t)-\zeta_h(p(t),t) + \vo-v
               + v - \zeta_h(q(t),t) - \bigl(\vo - \zeta_h(p(t),t)\bigr)
	\\ &= 0,
	\end{align*}
where we used \eqref{conteq}, \eqref{auxedo} and \eqref{veffeq} to see the third, fourth and fifth identities, respectively.	Thus equation \eqref{auxeq1} holds, as claimed.\\
	
	\underline{Part 2}: We claim that, for every $q,v,\err,t$ in $\RR$, the following holds.
	\begin{align}
	&h(u_{h,v,t}(q),v,\err,t) \frac{d}{dq}u_{h,v,t}(q) = \tg(q,v,\err) ,
	\\ &\C_{q(t)}h_t= S_{a(t)}\T_t\C_q\tg,
\label{prefinalstep}
	\end{align}
	where $q(t)\coloneqq u_{h,v,t}(q) $ and $a(t)\coloneqq q(t)-q-vt$.
	
	Indeed, regarding the first equation, we see that it holds for $t = 0$ and we compute the time derivative of the LHS applying the chain rule as follows. Write $u$ instead of $u_{h,v,t}$ to lighten notation and get
  \begin{align*}
  &\partial_t\Bigl(
  h \big(u(q,v,t),v,\err,t\bigr) \partial_qu(q,v,t)\Bigr) 
  \\
  &= \partial_t\Bigl(h\bigl(u(q,v,t),v,\err,t\bigr)\Bigr)\partial_qu(q,v,t)
       + h\bigl(u(q,v,t),v,\err,t\bigr)\partial_t\partial_qu(q,v,t)\Bigr)
  \\
  &= \Bigl(\partial_th\bigl(u(q,v,t),v,\err,t\bigr)
       + \partial_qh\bigl(u(q,v,t),v,\err,t\bigr)\partial_tu(q,v,t)\Bigr)\partial_qu(q,v,t)
  \\ 
    &\qquad+\enspace h\bigl(u(q,v,t),v,\err,t\bigr)\partial_q\partial_tu(q,v,t)
    \\ &
           = \Bigl(\partial_th\bigl(q(t),v,\err,t\bigr) 
       + \partial_qh\bigl(q(t),v,\err,t\bigr)v_h^{\eff}\bigl(u(q,v,t),v,t\bigr)\Bigr)\partial_qu(q,v,t) 
  \\ &\qquad+\enspace h\bigl(q(t),v,\err,t\bigr)\partial_q\Bigl(v_h^{\eff}\bigl(u(q,v,t),v,t\bigr)\Bigr)
     \tag*{\text{(because $\dot q=v^{\eff}$)}}  \\
&  = \Bigl(\partial_th(q(t),v,\err,t)
   + \partial_qh(q(t),v,\err,t)v_h^{\eff}\bigl(q(t),v,t\bigr)\Bigr)\partial_qu(q,v,t) 
   \\ &\qquad+\enspace h(q(t),v,\err,t)\partial_qv_h^{\eff}\bigl(q(t),v,t\bigr)\partial_qu(q,v,t)
   \\ 
    &= \Bigl(\partial_th(q(t),v,\err,t)
       + \partial_q\Bigl(h(q(t),v,\err,t)v_h^{\eff}\bigl(q(t),v,t\bigr)\Bigr)\Bigr) \partial_qu(q,v,t)
       = 0,
\end{align*}
since $h$ is a solution to the Cauchy problem \eqref{cauchy-g}. Thus the first equation holds. Regarding the second equation, which is specifically the following
	\[\frac{h(\C_{h_t,q(t)}^{-1}(x),w,\err,t)}{1-\sigma_h(\C_{h_t,q(t)}^{-1}(x),t)} = \frac{\tg(\C_{\tg,q}^{-1}\bigl(x+(v-w)t + q - q(t)\bigr),w,\err)}{1-\sigma_{\tg}(\C_{\tg,q}^{-1}\bigl(x+(v-w)t + q - q(t)\bigr))},\]
	we first note that, since the composition $\C_{h_t,q(t)}\circ u_{h,v,t}:\RR\to\RR$ is a diffeomorphism, we can replace $x$ in the equation above by $\C_{h_t,q(t)}(u_{h,v,t}(q)) = \C_{h_t,q(t)}(q(t))$. After doing this change of variables, and applying the equation \eqref{auxeq1} with $q=p$, we see that the resulting equation is
	\[\frac{h(q(t),v,\err,t)}{1-\sigma_h(q(t),t)} = \frac{\tg(q,v,\err)}{1-\sigma_{\tg}(q)},\]
	which, we claim, is equivalent to the already proved first equation:
	\begin{align}\label{uhderivative}
	\frac{d}{dq}u_{h,v,t}(q)=\frac{1-\sigma_{\tg}(q)}{1-\sigma_h(q(t),t)}.
	\end{align}
	Indeed, this equation holds for $t=0$ and, before taking its time derivative, we note that
	\begin{align}\label{veffdereq}
	\partial_qv_h^{\eff}(q,v,t) &= \frac{-\partial_q\zeta_h(q,t)\bigl(1-\sigma_h(q,t)\bigr) + \partial_q\sigma_h(q,t)\bigl(v-\zeta_h(q,t)\bigr)}{\bigl(1-\sigma_h(q,t)\bigr)^2}
	\\ &= \frac{-\partial_q\zeta_h(q,t) + \partial_q\sigma_h(q,t)v_h^{\eff}(q,v,t)}{1-\sigma_h(q,t)},\label{ch144}
	\end{align} 
	which follows directly from \eqref{veffeq}. Now, the time derivative of the (conveniently rearranged) expression \eqref{uhderivative}, gives
	\begin{align*}
          \partial_t\Bigl(
          &\partial_qu(q,v,t)\bigl(1-\sigma_h(q(t),t)\bigr) - \bigl(1-\sigma_{\tg}(q)\bigr)\Bigr)
          \\ &= \partial_t\Bigl(\partial_qu(q,v,t)\Bigr)\bigl(1-\sigma_h(q(t),t)\bigr)
               + \partial_qu(q,v,t)\partial_t\Bigl(1-\sigma_h(q(t),t)\Bigr)
	\\ &= \partial_q\Bigl(v_h^{\eff}(q(t),v,t)\Bigr)\bigl(1-\sigma_h(q(t),t)\bigr)
          \\ &\qquad+\enspace \partial_qu(q,v,t)\Bigl(-\partial_t\sigma_h(q(t),t)
               -\partial_q\sigma_h(q(t),t)\partial_tu(q,v,t)\Bigr)
	\\ &= \partial_qv_h^{\eff}(q(t),v,t)\partial_qu(q,v,t)\bigl(1-\sigma_h(q(t),t)\bigr)
          \\ &\qquad+\enspace \partial_qu(q,v,t)\Bigl(\partial_q\zeta_h(q(t),t)
               -\partial_q\sigma_h(q(t),t)v_h^{\eff}(q(t),v,t)\Bigr)
             = 0,
	\end{align*}
        where the last two identities follow from \eqref{conteq} and \eqref{ch144}, respectively.
	Thus the equation \eqref{uhderivative} holds, as claimed.\\
	
	\underline{Part 3}: We prove \eqref{finalstep}. The first equation in \eqref{finalstep} follows from \eqref{prefinalstep} and using \eqref{uhderivative}. To show the second line in \eqref{finalstep}, use \eqref{prefinalstep} to get
	\begin{align*}
	v_h^{\eff}(q(t), v, t) &= v + \int\err\int(v-w)\C_{q(t)}h(q(t),w,\err,t)dwd\err
	\\ &= v + \int\err\int(v-w)\T_t\C_q\tg(q + vt,w,\err)dwd\err
	\\ &= v + \partial_tj_{\tg}(q,v,t),
	\end{align*}
	by definition of $j_{\tg}$. Then integrate in $t$ and use \eqref{auxedo} to get
          \begin{align}
            q(t) - q 
            &= \int_0^t v_h^{\eff}(q(s), v, s)ds
            = \int_0^t\bigl(v + \partial_tj_{\tg}(q,v,s)\bigr)ds\notag
            \\           
             &  = vt + j_{\tg}(q,v,t) - j_{\tg}(q,v,0)
            = vt + j_{\tg}(q,v,t)= u_{\tg,v,t}(q).\notag
          \end{align}
  We proved $u_{g,v,t}(q)=u_{h,v,t}(q)$ up to a constant.         
\end{proof}


\phantomsection
\addcontentsline{toc}{section}{References}
\bibliographystyle{plain}
\bibliography{hydrorod-em}

\newpage

\noindent Pablo A. Ferrari

\noindent \address{Departamento de Matemática, Facultad de Ciencias Exactas y Naturales, Universidad de Buenos Aires and IMAS-CONICET,\\  Buenos Aires, Argentina}

\noindent \email{pferrari@dm.uba.ar}


\bigskip

\noindent Chiara Franceschini

\noindent \address{Department of mathematics, University of Modena and Reggio Emilia,  \\ Modena, Italy}

\noindent \email{cfrances@unimore.it}

\bigskip

\noindent Dante G. E. Grevino

\noindent \address{Departamento de Matemática, Facultad de Ciencias Exactas y Naturales, Universidad de Buenos Aires,\\
  Buenos Aires, Argentina}

\noindent \email{dantegabriel\_17@hotmail.com}

\bigskip

\noindent Herbert Spohn

\noindent \address{Zentrum Mathematik and Physics Department \\
Technical University Munich\\
M\"{u}nchen, Germany}

\noindent \email{spohn@tum.de}

\end{document}